\newcommand{\vv}[1]{\mathbf{#1}}
\newcommand{\vc}[1]{\mathit{#1}}
\newcommand{\sslash}{\text{\tiny{\!\slash\!\slash\,}}}
\newcommand{\stokes}{\eqref{eq_stokes1}--\eqref{eq_stokes4}(+\eqref{eq_stokes5})}
\definecolor{darkred}{rgb}{0.2,0.1,0.4}
\newtheorem{theorem}{Theorem}
\newtheorem{proposition}[theorem]{Proposition}
\newtheorem{lemma}[theorem]{Lemma}
\newtheorem{definition}[theorem]{Definition}
\begin{document}

\title{Justification of lubrication approximation: \\ an application to fluid/solid interactions}
\author{ M. Hillairet\footnote{I3M, Universit\'e de Montpellier, Place Eugene Bataillon, 34095 Montpellier Cedex} \& T. Kela\"i\footnote{IMJ-PRG et Universit\'e Paris 7, 56-58, Avenue de France, 75205 Paris France}}
\date{\today}

\maketitle

\begin{abstract}
We consider the stationary Stokes problem in a three-dimensional fluid domain $\mathcal F$ with non-homogeneous Dirichlet boundary conditions.
We assume that this fluid domain is the complement of a bounded obstacle $\mathcal B$  in a bounded or an exterior smooth container $\Omega$. 
We compute sharp asymptotics of the solution to the Stokes problem when the distance between the obstacle and the container boundary is small. 
\end{abstract}

\bigskip

{\bf Keywords.} Fluid/solid interactions, Stokes problem, lubrication approximation.


\bigskip

In this paper, we consider the 3D-Stokes problem 
\begin{eqnarray}
\label{eq_stokes1}\Delta \vv{u} - \nabla p &=& 0\,,\, \\
\label{eq_stokes2} \nabla \cdot \vv{u} &=&0 \,,
\end{eqnarray}
in a fluid domain $\mathcal F.$ Without restricting the generality, we set the viscosity of the fluid to $1.$ We assume that $\mathcal F= \Omega \setminus \overline{\mathcal B}$ is the complement of a smooth simply-connected bounded domain $\mathcal B$ inside a container $\Omega.$ The container $\Omega$ is either a relatively compact simply-connected smooth open set or the exterior of a simply-connected smooth compact set $\mathcal B^*.$ In both cases, it has a smooth compact connected boundary. We complete then \eqref{eq_stokes1}-\eqref{eq_stokes2} with boundary conditions:
\begin{eqnarray}
\label{eq_stokes3} \vv{u} &=& \vv{u}^* \,, \phantom{0} \,\,\, \text{ on $\partial \Omega$}\,,\\
\label{eq_stokes4} \vv{u} &=& \mathbf 0 \,, \phantom{\vv{u}^*} \,\,\,  \text{ on $\partial \mathcal B$}\,, 
\end{eqnarray}
where $\vv{u}^* \in C^{\infty}(\partial \Omega)$ does not prescribe any flux through the container boundary:
\begin{equation} \label{eq_nofluxintro}
\int_{\partial \Omega} \vv{u}^* \cdot \vv{n}{\rm d}\sigma =0\,.
\end{equation}
If $\Omega$ is an exterior domain, we add a vanishing condition at infinity:
\begin{eqnarray} \label{eq_stokes5}
\lim_{|(x,y,z)| \to \infty}\vv{u}(x,y,z) = 0\,.
\end{eqnarray}
With the above assumptions on the boundary data $\vv{u}^*$ it is classical that system \eqref{eq_stokes1}--\eqref{eq_stokes4}(+\eqref{eq_stokes5}) admits a unique classical solution 
$(\vv{u},p)$ (the pressure $p$ being unique up to a constant), see \cite{Galdibooknew} for instance. {\em Our aim in this paper is to give a sharp description of the solution to the Stokes problem {\em \eqref{eq_stokes1}--\eqref{eq_stokes4}(+\eqref{eq_stokes5})} when the distance between $\mathcal B$ and $\partial \Omega$ is small.}

\medskip

Computing such asymptotics is an important issue related to the modeling of solid-body motion inside a viscous fluid. The typical configuration we have in mind is that $\mathcal B$ (resp. $\mathcal B$ and $\mathcal B^*$) is a (are) moving solid body (bodies) inside the container $\Omega$ (in the whole space) which is filled by a viscous incompressible constant-density fluid. A typical issue is then to determine whether the fluid viscosity prevents the moving body $\mathcal B$ from touching other solid boundaries ({\em i.e.} the boundary of the container or the boundary of the other solid body) and more generally to measure the influence of the viscosity on the close-contact dynamics of $\mathcal B$. To this end, one remarks that, when a solid body is about to collide another solid boundary with moderate relative velocity, the fluid Reynolds number tends to $0$ in the gap so that a stationary Stokes system is sufficient to predict the force and torque exherted by the fluid on the moving body. With this particular application in mind, several authors consider the free-fall of a sphere above a ramp \cite{Cooley&ONeill68,Cooley&ONeill69,Dean&Oneill64,ONeill64,ONeill&Stewartson67} in a Stokes fluid. Explicit values for the solution to the Stokes problem \eqref{eq_stokes1}--\eqref{eq_stokes4}(+\eqref{eq_stokes5}) and the associated force and torque are provided. 
A formal lubrication approximation is also proposed in \cite{Cox74,Happel&Brenner65} which generalizes these formulas to arbitrary
configurations. In the limit regime where there is contact, solutions to the Stokes problem are also computed in \cite{Nazarov95} under further assumptions on the boundary data $\mathbf u^*$ (broadly, the boundary data $\mathbf u^*$ has to vanish sufficiently where there is contact). Related computations for a perfect fluid are provided in \cite{CardoneNazarovSokolowski09,HouotMunnier08,MunnierRamdanipp}.

\medskip

In this paper, we fill the gap between the explicit formulas of \cite{Cooley&ONeill68,Cooley&ONeill69,Dean&Oneill64,ONeill64,ONeill&Stewartson67}, the formal asymptotics of \cite{Cox74,Happel&Brenner65}
and the analysis in \cite{Nazarov95}. We justify rigorously lubrication approximation in the spirit of \cite{Chambayada86,Cimatti83}. Compared to these latter references, we are interested inhere in fluid films that do not vanish uniformly in their widths. This fact leads to severe new difficulties. First, the lubrication scaling acts on coordinates in both tangential and orthogonal directions to the boundaries (see \eqref{eq_u1}--\eqref{eq_p4}). Second, the asymptotic pressure and velocity-fields yielding from the formal lubrication approximation are defined on an asymptotic fluid domain which is not simply related to the fluid-domain for a given positive body/boundary distance $h.$ Consequently, in order to compare the values of the solution to the Stokes problem with its asymptotic value, we introduce 
an intermediate velocity-field which embeds the lubrication approximation in the effective fluid-domain
(see Section \ref{sec_asvf}). The construction of this intermediate velocity-field is an important step of our analysis. 
Indeed, the intermediate velocity-field is a key-ingredient in order to extend the computations on the close-contact dynamics of bodies in a Stokes fluid to more complicated models:
Navier-Stokes/Newton models or Navier-Stokes/elasticity models. 
This fact has already been shown for the Navier-Stokes/Newton model in simple configurations
(see \cite{DGVH10,Hillairet07,HillairetTakahashi09,HillairetTakahashi10}).
We believe our approach extends to other incompresible models (for instance to the case of potential flows as in \cite{MunnierRamdanipp}). However, the incompressibility condition is crucial to our computation and the extension to the compressible case is completly open.

\medskip

To fix ideas, we make more specific the geometry of the gap between $\mathcal B$ and $\partial \Omega.$ 
We introduce a set of cartesian coordinates $(x,y,z) \in \mathbb R^3$ and $(r,\theta,z) \in (0,\infty) \times (-\pi,\pi) \times \mathbb R$ the corresponding cylindrical coordinates. These coordinates are associated with two orthonormal basis of $\mathbb R^3$ denoted by
$(\vv{e}_x,\vv{e}_y,\vv{e}_z)$ and $(\vv{e}_r,\vv{e}_{\theta},\vv{e}_z)$ respectively.
We consider that, in a neighborhood $\mathcal U$ of the origin, $\Omega$ and $\mathcal B$ satisfy: 
\begin{eqnarray} \label{eq_bdyb}
(x,y,z) \in (\mathbb R^3 \setminus \overline{\Omega}) \cap \mathcal U &\Leftrightarrow& \left\{(x,y) \in B(\mathbf 0,L) \text{ and }  z < \gamma_b(x,y) \right\}\,,  \\ \label{eq_bdyt}
(x,y,z) \in \mathcal B \cap \mathcal U &\Leftrightarrow& \left\{(x,y) \in B(\mathbf 0,L) \text{ and }  z > h + \gamma_t(x,y) \right\}\,.
\end{eqnarray}    
Here $(h,L)$ are given positive parameters. The first one measures the distance between $\partial \Omega$ and $\partial \mathcal B,$ the second one is a characteristic length on which parametrizing the boundaries of $\Omega$ and $\mathcal B$ by $(x,y)$-variables is relevant. The two functions $\gamma_t$ and $\gamma_b$ are smooth on 
$B(\mathbf 0,L) \subset \mathbb R^2$ and we assume throughout the paper that they satisfy the following assumptions: 
\begin{itemize}
\item[$({\rm A}1)$] $\nabla \gamma_t(\mathbf 0) = \nabla \gamma_b(\mathbf 0) = \mathbf 0,$ \\[-8pt]
\item[$({\rm A}2)$] there exists $R_1>0$ such that (in the sense of symmetric matrices)
$$
 \nabla^2 \gamma_t(x,y) - \nabla^2 \gamma_b (x,y) \geq \dfrac{1}{R_1} \mathbb{I}_3 \,, \quad \forall \, (x,y) \in B(\mathbf 0,L)\,.
$$ 
\end{itemize}
We complement these local assumptions in $\mathcal U$ with a global one concerning $\mathcal B$ and $\partial \Omega$ outside $\mathcal U:$  
\begin{equation} \tag{A3}
\exists\,  \delta >0  \text{ s.t. } \; {\rm dist(\partial \Omega \setminus \mathcal U, \mathcal B)} > \delta.  
\end{equation}
The parametrizations \eqref{eq_bdyb} and \eqref{eq_bdyt} together with assumptions (A1)-(A2)-(A3) are paradigmatic of what we call a "single non-degenerate contact". Indeed, considering that $\mathcal B$ is a translating particle inside the container $\Omega$ amounts to let the parameter $h$ depend on time. Assumption (A3) implies then that a contact between $\mathcal B$ and $\partial \Omega$ may only hold in $\mathcal U.$ 
Assumption (A2) yields that the contact in $\mathcal U$ is unique and non-degenerate {\em i.e.} when $h=0$ the "vertical" distance $\gamma_t- \gamma_b$ vanishes in $0$ only and with minimal vanishing order.
We emphasize that, in the smooth case we consider here, the uniform boundedness we require in (A2) reduces to assuming that
\begin{equation} \label{eq_A'2} \tag{A'2}
 \nabla^2 \gamma_t(\mathbf 0) - \nabla^2 \gamma_b (\mathbf 0) \geq \dfrac{1}{R_1} \mathbb{I}_3 \,,
 \end{equation}
 up to restrict the size of $L$ {and change the value of $R_1.$}
In this "single non-degenerate contact"-case, the assumptions \eqref{eq_bdyb}-\eqref{eq_bdyt} together with (A1) do not restrict the generality: they only amount to choose the origin of coordinates in the only point of $\partial \Omega$ realizing the distance between $\partial \Omega$ and $\partial \mathcal B,$ and to choose the system of coordinates so that the common normal to $\partial \Omega$ and $\partial \mathcal B,$ in the pair of points realizing the distance between $\partial \Omega$ and $\partial \mathcal B,$ is $\vv{e}_z$.

\medskip

Our aim is to compute the asymptotics of the solution to the Stokes problem \stokes\, for a given boundary condition $\vv{u}^* \in C^{\infty}(\partial \Omega)$ in the geometry depicted above under the further assumption that $h$ is small and other parameters are of order $1.$ 
To introduce our main result, we recall the main steps of the formal computations in \cite{Cox74} for the case where $\mathcal B$ is a sphere of radius $S$ and $\Omega = \mathbb R^3 \setminus \mathcal B^*$ with $\mathcal B^*$ a sphere of radius $R$. First, given the shape of the aperture between both spheres, one looks for a solution $(\vv{u},p) := ((\vc{u}_x,\vc{u}_y,\vc{u}_z),p)$ that reads
\begin{eqnarray} \label{eq_u1}
\vc{u}_x(x,y,\vc{z}) &=& h^{ \alpha  - \frac 12 } \tilde{{u}}_x( h^{-\frac 12}{\vc{x}},h^{-\frac 12}{\vc{y}},h^{-1}{\vc{z}})  \,, \\
 \vc{u}_y(x,y,\vc{z}) &=& h^{ \alpha - \frac 12} \tilde{{u}}_y( h^{-\frac 12}{\vc{x}},h^{-\frac 12}{\vc{y}},h^{-1}{\vc{z}})  \,, \\
 \vc{u}_z(x,y,\vc{z}) &=&h^{\alpha } \tilde{{u}}_z( h^{-\frac 12}{\vc{x}},h^{-\frac 12}{\vc{y}},h^{-1}{\vc{z}})\,, \\[4pt]
  {p}(x,y,\vc{z}) & = &  h^{ \alpha - 2}\tilde{{p}}( h^{-\frac 12}{\vc{x}},h^{-\frac 12}{\vc{y}},h^{-1}{\vc{z}})\,,  \label{eq_p4}
\end{eqnarray}
in the aperture between the spheres. The parameter $\alpha$ is chosen depending on the values of $\vv{u}^* := (u_x^*,u_y^*,u_z^*).$
For instance, if $u^* = u^*_{\bot} \mathbf{e}_z$ with $u^*_{\bot} \in \mathbb R \setminus \{0\},$ one chooses $\alpha=0.$ We proceed with this particular case. We denote with tildas the new space variables 
$$
 \tilde{\vv{x}} := ({h}^{-\frac 12}x,{h}^{-\frac 12}y,h^{-1}\vc{z}) \,.
$$
These new coordinates belong to the set $\tilde{\mathcal G}_{h}$ that "converges" (when $h \to 0$) to 
$$
\tilde{\mathcal G}^{lub} :=\left \{(\tilde{x},\tilde{y},\tilde{z}) \in \mathbb R^3\, \text{ s.t. }  \, \tilde{z} \in \left(- \dfrac{\tilde{x}^2+\tilde{y}^2}{2R} , 1+ \dfrac{\tilde{x}^2+\tilde{y}^2}{2S}  \right) \right\}.
$$ 
Substituting ansatz \eqref{eq_u1}--\eqref{eq_p4} into \eqref{eq_stokes1}--\eqref{eq_stokes2} yields:
\begin{eqnarray*}
\partial_{\tilde x} \tilde{\vc{u}}_x + \partial_{\tilde y} \tilde{\vc{u}}_y + \partial_{\tilde z} \tilde{\vc{u}}_z &=& 0\,,\\
\partial_{\tilde z\tilde z} \tilde{\vc{u}}_x - \partial_{\tilde x} \tilde{p} &=& 0 \,, \\
\partial_{\tilde z \tilde z} \tilde{\vc{u}}_y - \partial_{\tilde y} \tilde{p} &=& 0\,,\\
\partial_{\tilde z} \tilde{p} &=&0\,,
\end{eqnarray*}
completed with boundary conditions:
\begin{eqnarray*}
\tilde{u}_x = \tilde{u}_y = 0 && \text{ on $\partial \tilde{\mathcal G}^{lub}\,,$}\\[3pt]
\tilde{u}_z = 0 && \text{ on $\partial \tilde{\mathcal G}^{lub}_2 := \left\{ \tilde{z} = \tilde \gamma_2(\tilde x,\tilde y) =: 1+ \dfrac{\tilde{x}^2+\tilde{y}^2}{2S}\,, (\tilde x,\tilde y) \in \mathbb R^2\right\}$}\,,\\
\tilde{u}_z = u^*_{\bot}  &\quad &\text{ on $\partial \tilde{\mathcal G}^{lub}_1 := \left\{ \tilde{z} = \tilde \gamma_1(\tilde x,  \tilde y) =:- \dfrac{\tilde{x}^2+\tilde{y}^2}{2R}\,, (\tilde x,\tilde y) \in \mathbb R^2\right\}$}\,.
\end{eqnarray*}
The pressure is normalized by assuming that it vanishes at infinity. Introducing $\tilde \gamma := \tilde \gamma_2 - \tilde \gamma_1,$ the unique solution to this problem reads $\tilde{\vv u}(\tilde{\mathbf{x}}) = \vv{u}^{lub}(\tilde{\mathbf{x}}),$ $\tilde{p} (\tilde{\mathbf{x}})= p^{lub}(\tilde{x},\tilde{y})$ with:
\begin{eqnarray*}
{u}^{lub}_x(\tilde{x},\tilde{y},\tilde{z})  &=&  \dfrac{1}{2} \partial_{\tilde x} {p}^{lub}(\tilde{x},\tilde{y}) ( \tilde{z} - \tilde{\gamma}_1(\tilde{x},\tilde{y}))( \tilde{z} - \tilde{\gamma}_2(\tilde{x},\tilde{y}))\\
{u}^{lub}_y(\tilde{x},\tilde{y},\tilde{z})  &=&\dfrac{1}{2} \partial_{\tilde y} {p}^{lub}(\tilde{x},\tilde{y}) ( \tilde{z} - \tilde{\gamma}_1(\tilde{x},\tilde{y}))( \tilde{z} - \tilde{\gamma}_2(\tilde{x},\tilde{y}))\\
{u}^{lub}_z(\tilde{x},\tilde{y},\tilde{z}) &=&  \dfrac 12  \text{div}_{\tilde{x},\tilde{y}} \left[ \int^{\tilde{\gamma}_2(\tilde{x},\tilde{y})}_{\tilde{z}} \!\!\!\left((s-\tilde{\gamma}_2(\tilde{x},\tilde{y}))(s-\tilde{\gamma}_1(\tilde{x},\tilde{y})) \right)\nabla_{\tilde x,\tilde y} p^{lub}(\tilde{x},\tilde{y}) {\rm d}s \right]\,,	  
\end{eqnarray*}
and where ${p}^{lub}$ is the unique solution to 
\begin{eqnarray}
- \dfrac{1}{12} {\rm div} (\tilde{\gamma}^3\nabla {p}^{lub})&=&  u^*_{\bot}\,, \quad  \text{on $\mathbb R^2$}\,,\\
\lim_{|(\tilde{x},\tilde{y})| \to \infty}{p}^{lub}(\tilde{x},\tilde{y}) &=&0\,. 
\end{eqnarray}

\medskip

Herein, we justify these formal computations rigorously. A (nonetheless formal) statement of our main result
reads:
\begin{theorem}
Assume {\em (A1)--(A3)} are in force and $\mathbf{u}^* \in C^{\infty}(\partial \Omega)$ satisfies  \eqref{eq_nofluxintro}. Let $\mathbf{v}^*_{\sslash}$ and $v_{\bot}^*$ be given by:
\begin{equation} \label{eq_convention}
\vv{v}^*_{\sslash} = u_x^*(\mathbf 0) \vv{e}_x + u_y^*(\mathbf 0) \vv{e}_y\,, \quad 
v_{\bot}^*(x,y) = u_z^*(\mathbf 0) +   x \partial_x {u}_z^*(\mathbf 0)  + y \partial_y {u}_z^*(\mathbf 0) \,.
\end{equation}
When $h << 1$ while $(\gamma_t,\gamma_b,R_1,\mathbf{u}^*)$ remain of order $1,$ the main contribution to the velocity-field of the solution $(\vv{u},p)$ to the Stokes problem {\em \eqref{eq_stokes1}--\eqref{eq_stokes4}(+\eqref{eq_stokes5})} is given by $\mathbf{v} = (\mathbf{v}_{\sslash},v_{\bot})$ with
\begin{eqnarray} \label{eq_vx}
\mathbf{{v}}_{\sslash}(x,y,z)  &=&  \dfrac{1}{2}  ( z - (h+{\gamma}_t({x},{y})))( z - {\gamma}_b({x},{y})) \nabla_{x,y} {q}({x},{y})+ \dfrac{(h+\gamma_t(x,y)-z)}{\gamma(x,y)}\mathbf{v}^*_{\sslash}\\\label{eq_vy}
{v}_{\bot}(x,y,z) &=&  \dfrac 12  {\rm div}_{x,y} \left[ \int^{h+\gamma_t({x},{y})}_{z} \!\!\!\left((s-\gamma_b({x},{y}))(s-(h+\gamma_t({x},{y}))) \right)\nabla_{x,y} {q}(x,y) {\rm d}s \right]	  \\ 
&& +   \int^{h+\gamma_t({x},{y})}_{z}  {\rm div}_{x,y} \left[ \dfrac{(h+\gamma_t(x,y)-s)}{\gamma(x,y)} \vv{v}^*_{\sslash}  \right]{\rm d}s  \notag
\end{eqnarray}
in the aperture domain $\mathcal G_{L/2} :=  \{(x,y,z) \in \mathbb R^3 \text{ s.t. } (x,y) \in B(\mathbf 0,L/2) \text{ and } z \in (\gamma_b(x,y),h+\gamma_t(x,y))\},$ and where $q$ is the unique solution to 
\begin{eqnarray}
- \dfrac{1}{12} {\rm div} (\gamma^3 \nabla q)&=&  v^*_{\bot} - \frac 12 {\rm div}_{x,y} ((\gamma_t + \gamma_b) \vv{v}^*_{\sslash})\,, \quad  \text{on $B(\mathbf 0,L)$}\,, \label{eq_q}\\
 q &=&0   \quad  \quad \quad  \quad \quad \quad \quad \quad \,\,\,\, \quad \quad \quad \; \; \text{ on $\partial B(\mathbf 0,L)$}\,,\label{cab_q} 
\end{eqnarray}
with $\gamma := h + \gamma_t - \gamma_b.$
\end{theorem}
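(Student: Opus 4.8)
The plan is to control the difference $\mathbf{w} := \mathbf{u} - \mathbf{v}^{\mathrm{app}}$, where $\mathbf{v}^{\mathrm{app}}$ is a globally defined, divergence‑free \emph{intermediate velocity‑field} on $\mathcal{F}$ that (i) coincides, up to lower‑order terms in $h$, with the field $\mathbf{v}=(\mathbf{v}_{\sslash},v_{\bot})$ of \eqref{eq_vx}--\eqref{eq_vy} in the aperture $\mathcal{G}_{L/2}$, (ii) vanishes on $\partial\mathcal{B}$, equals $\vv{u}^*$ on $\partial\Omega$, and (iii) decays at infinity when $\Omega$ is exterior. Since $(\vv{u},p)$ solves \eqref{eq_stokes1}--\eqref{eq_stokes4}(+\eqref{eq_stokes5}) exactly, $\mathbf{w}$ solves a Stokes system with homogeneous boundary data and a forcing $\mathbf{f} := \Delta\mathbf{v}^{\mathrm{app}} - \nabla q^{\mathrm{app}}$ (a distribution supported where $\mathbf{v}^{\mathrm{app}}$ fails to solve Stokes). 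An energy estimate adapted to the degenerating geometry then yields $\|\nabla\mathbf{w}\|_{L^2(\mathcal{F})}$ small compared with $\|\nabla\mathbf{v}^{\mathrm{app}}\|_{L^2(\mathcal{F})}$, i.e. with the natural dissipation norm, which is the precise sense in which $\mathbf{v}$ is the main contribution.

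\textbf{Construction of $\mathbf{v}^{\mathrm{app}}$.} In $\mathcal{G}_{L/2}$ we take $\mathbf{v}$ itself: by construction $v_{\bot}$ is obtained by integrating $-\mathrm{div}_{x,y}\mathbf{v}_{\sslash}$ from $z$ up to $h+\gamma_t$ against the $u_z=0$ condition on $\partial\mathcal{B}$, so $\mathbf{v}$ is exactly divergence‑free there, and the bottom condition $v_\bot(x,y,\gamma_b)=v_\bot^*$ holds precisely because $q$ solves the Reynolds equation \eqref{eq_q}--\eqref{cab_q}; note that only the first‑order Taylor polynomial of $u_z^*$ at $\mathbf 0$ and the value $\vv{v}^*_{\sslash}$ of $\mathbf u^*_{\sslash}$ at $\mathbf 0$ enter, higher‑order terms being $O(h)$ under the scaling $x,y\sim h^{1/2}$, $z\sim h$. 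Away from $\mathcal{U}$, where $\mathrm{dist}(\partial\Omega\setminus\mathcal{U},\mathcal{B})>\delta$ by (A3), we glue an $h$‑independent, divergence‑free lift of $\vv{u}^*$ through the "fat" part of $\mathcal{F}$. The delicate region is the transition near $\partial B(\mathbf 0,L/2)$: cutting $\mathbf{v}$ off by a partition of unity in $(x,y)$ produces a divergence error concentrated in a thin annulus (where $\gamma\sim h$), which we remove with a Bogovski\u\i-type operator on that annular thin domain, keeping track of its $h$‑dependent operator bounds.

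\textbf{Residual and energy estimates.} In $\mathcal{G}_{L/2}$ the field $\mathbf{v}$ solves only the lubrication‑reduced system, so $\mathbf{f}$ consists there of the neglected contributions ($\partial_{xx},\partial_{yy},\partial_{xz},\partial_{yz}$ derivatives of $\mathbf{v}_{\sslash}$, and $\nabla_{x,y}$ versus $\partial_{zz}$ balances) together with the curvature corrections incurred by using the exact profiles $\gamma_t,\gamma_b$ rather than their parabolic approximations; by (A1)--(A2), smoothness of $\gamma_t,\gamma_b$, and elliptic regularity for $q$ on $B(\mathbf 0,L)$, these are smaller than the retained terms by a factor $h$ in the rescaled variables $\tilde{\vv{x}}$. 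Outside $\mathcal{G}_{L/2}$, $\mathbf{f}$ comes from the cutoff and the Bogovski\u\i\ corrector and is controlled by $\mathbf{v},\nabla\mathbf{v}$ on the transition annulus. Testing the Stokes system for $\mathbf{w}$ against $\mathbf{w}$ kills the pressure and boundary terms and leaves $\|\nabla\mathbf{w}\|_{L^2(\mathcal{F})}^2 = -\langle\mathbf{f},\mathbf{w}\rangle$; since $\mathbf{w}$ vanishes on both $z=\gamma_b$ and $z=h+\gamma_t$, a weighted Poincar\'e inequality gives $\|\mathbf{w}/\gamma\|_{L^2(\mathcal{G}_{L/2})}\lesssim\|\partial_z\mathbf{w}\|_{L^2}$, the weight $\gamma$ making the bound sharp near the contact, and pairing this with the scaled form of $\mathbf{f}$ closes the estimate.

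\textbf{Main obstacle.} The hard part is exactly the last step: getting the stability estimate with the correct power of $h$. This requires a Poincar\'e (and, for sharp constants, a Korn) inequality and an inf‑sup/Bogovski\u\i\ estimate that are \emph{uniform in $h$} on the degenerating domain $\mathcal{G}_{L/2}$ welded to the fat part of $\mathcal{F}$ — properties that fail on a naive thin film and that must be recovered by exploiting the parabolic non‑degeneracy (A2) of the gap, which fixes the precise $h$‑profile $\gamma\sim h + |(x,y)|^2/R_1$ of the film width. Rewriting the whole argument in the variables $\tilde{\vv{x}}$, so that the rescaled error solves a Stokes problem on $\tilde{\mathcal{G}}_h$ with $O(h)$ forcing relative to the $O(1)$ profile $\vv{u}^{lub}$ on $\tilde{\mathcal{G}}^{lub}$, makes the bookkeeping transparent and isolates this uniform‑in‑$h$ functional analysis on the family $\tilde{\mathcal{G}}_h$ as the crux.
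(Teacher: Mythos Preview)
Your high-level strategy---build a divergence-free intermediate field matching $\mathbf v$ in the gap, then bound $\|\nabla(\mathbf u-\mathbf v^{\mathrm{app}})\|_{L^2}$ by the dual norm of the Stokes residual---is exactly the paper's. The paper packages the duality step as Proposition~\ref{prop_car2}: $\|\mathbf v^{\mathrm{app}}-\mathbf u;V\|\le \|\Delta\mathbf v^{\mathrm{app}}-\nabla q^{\mathrm{app}};V_0^*\|$, with no geometric constant, and then controls the residual. But you misread the geometry of the transition region, and as a result you mislocate the main obstacle.

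The cutoff in $(x,y)$ is placed on the annulus $L/2<|(x,y)|<L$, and by (A2) one has $\gamma\ge h+C_{ell}\,|(x,y)|^2$ there, so the gap width is of order $L^2$, not $h$. The transition annulus is therefore a \emph{fat} region, uniformly in $h$. The paper exploits this: the exterior piece $\mathbf v_{ext}$ is obtained by solving a Stokes problem on a fixed ($h$-independent) subdomain $\Omega^*\subset\mathcal F\setminus\mathcal G_{L/4}$ with the residual boundary data, and Lemma~\ref{lem_vext} gives an $h$-independent bound immediately. No Bogovski\u\i\ operator on a degenerating thin domain is needed, and no uniform-in-$h$ inf--sup or Korn inequality on $\mathcal G_{L/2}$ enters the argument. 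The only Poincar\'e-type input is a one-dimensional Hardy inequality in the $z$-variable (used in the proof of Proposition~\ref{prop_v0}), which is elementary and $h$-free.

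The actual crux lies elsewhere: once the residual is reduced to integrals over $\mathcal G_L$, the $h$-dependence is carried entirely by weighted norms of the two-dimensional lubrication pressure, i.e.\ quantities of the form $\int_{B(\mathbf 0,L)}\gamma^{3+n}|\nabla^{k+1}q|^2$. All of Section~\ref{sec_Gilbarg} is devoted to proving that these are bounded (or diverge at a controlled rate) uniformly in $h$; this is where the non-degeneracy (A2) is really used, via the weight inequality $\gamma\ge h+C_{ell}r^2$. The paper also splits $\mathbf u^*=\mathbf u^*_{as}+\mathbf u^*_R$ and handles the remainder $\mathbf u^*_R$ purely by the variational characterization (Proposition~\ref{prop_varcar}), which is simpler than pushing it through the residual estimate. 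Finally, the paper never passes to the rescaled variables $\tilde{\mathbf x}$: working in the original coordinates with $\gamma$-weights is what makes the bookkeeping tractable.
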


We introduce here the notations $\sslash$ and $\bot$ for the components of a vector $\vv{v}$ that are respectively parallel and normal to the tangent space to $\partial \Omega$ in the origin. Corresponding decomposition of $\nabla$ are denoted by $\nabla_{x,y}$ and $\partial_z.$ We keep this convention in what follows.
Our geometric assumptions imply that $\vv{e}_x$ and $\vv{e}_y$ are tangent to $\partial \Omega$ in the origin. Hence, the derivatives $\partial_x \vv{u}^*(\mathbf 0)$ and $\partial_{y} \vv{u}^*(\mathbf 0)$ are well-defined. 
 Even though the result concerns an asymptotic behavior when $h \to 0$
the value of this parameter is fixed in all computations. For this reason, we do not let the parameter $h$ appear in most of our notations (such as the fluid domain $\mathcal F$, the distance function $\gamma$ ...). 

\medskip

A more quantitative statement of the above theorem is given in next section. In particular, we make precise for which norms the extracted contribution is the dominating term, the size of remainder terms and the dependencies w.r.t. $\gamma_t,\gamma_b,R_1,\mathbf u^*.$
We prove this result by 
\begin{itemize}
\item computing {\em a priori} estimates on the pressure $q$ as constructed in this statement (in the regime $h<<1$)
\item relating the difference between the exact solution $\vv{u}$ to the Stokes problem and (an extension of) $\vv{v}$ to the obtained estimates on the pressure $q.$
\end{itemize}
As a corollary, we obtain that the leading order in the asymptotics of the Stokes solution is given by the first order expansion of the boundary data $\mathbf u^*$ in the origin. 
This dependance occurs through a pressure solution to a simpler problem but still not explicit. 
We explain in next section that in some special case (when $\mathcal B$ is a sphere and $\Omega= \mathbb R^3 \setminus \mathcal B^*$ with $\mathcal B^*$ a sphere) we can compute accurate informations on the pressure and get explicit expansion of the Stokes solution w.r.t. $h$ 
and the boundary data $\vv{u}^*_{\sslash}(\mathbf 0), u_{\bot}^*(\mathbf 0), \nabla_{x,y} u_{\bot}^*(\mathbf 0).$ 
\medskip

In the set of assumptions we introduced up to now, we enforced the boundary condition to vanish on $\partial \mathcal B.$ We can always reduce asymptotic computations with general Dirichlet boundary conditions to this case. 
We also emphasize that the computations, that we present here in the "single non-denegerate contact" case, extend to general "non-degenerate contacts". A general non-degenerate contact would correspond to the case where the moving solid $\mathcal B$ may collide $\partial \Omega$ in several points satisfying assumption (A2). There would then exist at most a finite number of "contact points" that might be treated separately as "non-degenerate isolated contacts". The linearity of the Stokes problem ensures that the exact solution behaves as the sum of the asymptotic solutions that we compute in the vicinity of each contact point. Previous calculations due to V. Starovoitov show that these "non-degenerate isolated contacts" are the only remaining ones to rule out in order to show that no contact between solid bodies occur in a Stokes or a Navier-Stokes fluid (see \cite{Starovoitov04}).

\medskip

The outline of the paper is as follows. In next section, we recall the classical theory for solving the Stokes problem \stokes\, and give a quantitative statement of our main results, see Theorem \ref{thm_mainexact} and Theorem \ref{cor_mainexact}.
In Section \ref{sec_Gilbarg}, we study the properties of the problem \eqref{eq_q}-\eqref{cab_q}. We compute estimates satisfied by the solution $q$ with respect to the data $(v_{\bot}^*,\vv{v}_{\sslash}^*)$ and compute the divergence-rate of $q$ when $h \to 0$. One particular feature of the estimates we obtain is that they are "uniform" w.r.t. the distance $h$. The last two sections are devoted to the proofs of Theorem \ref{thm_mainexact} and Theorem \ref{cor_mainexact} respectively.


\section{Quantitative statement of main results} \label{sec_galdi}
In this section, we recall function-spaces and existence results for problem \stokes. We give then a quantitative statement of our main result. We conclude by exhibiting two criterions which measure whether a velocity-field is a good approximation of a solution to the Stokes problem or not.

\subsection{Solving the Stokes problem} 

We first recall the way the system \stokes\ is tackled in \cite{Galdibooknew}. 
Let denote: 
\begin{itemize}
\item $\mathcal V :=\{ \vv{u}_{|_{\mathcal F}}, \vv{u} \in C^{\infty}_c (\mathbb R^3) \text{ s.t. } \nabla \cdot \vv{u} =0  \}$ and $\mathcal V_0 :=\{ \vv{u} \in C^{\infty}_c ({\mathcal F}) \text{ s.t. } \nabla \cdot \vv{u} =0  \}\,,$ 
\item $V$ (resp. $V_0$) the completion of $\mathcal V$ (resp. $\mathcal V_0$) endowed with the norm:
$$
\|\vv{u} ; V\| :=  \left[ \int_{\mathcal F} |\nabla \vv{u}|^2\right]^{\frac 12} \,.
$$
\end{itemize}
This makes $(V,\|\ \cdot \ ; V\|)$ (resp. $(V_0,\|\ \cdot\ ; V\|)$) to be a Hilbert space endowed with the scalar product:
$$
((\vv{u},\vv{v} )) = \int_{\mathcal F} \nabla \vv{u} : \nabla \vv{v} \,.
$$
In the terminology of \cite{Galdibooknew}, our space $V$ (resp. $V_0$) is a closed subspace 
of $D^{1,2}(\mathcal F)$ (resp. $D_0^{1,2}(\mathcal F)$), see \cite[p.80]{Galdibooknew}. 
In particular, applying \cite[Theorem II.6.1 (i), (II.6.22)]{Galdibooknew} we have that $V$ embeds in  $L^6(\mathcal F)$ so that $V \subset W^{1,2}_{loc}(\mathcal F).$ This entails that  $\vv{v} \in V$ vanishes at infinity in a weak sense, if required,
and has a well-defined trace $\mathbf{v}_{|_{\partial \mathcal F}} \in H^{\frac 12}(\partial \mathcal F).$ In particular, for arbitrary 
$\vv{u}^* \in H^{\frac 12}(\partial \Omega),$ we might define the affine-subspace of $V$:
$$
V[\vv{u}^*] := \{ \vv{u} \in V \text{ such that } \vv{u}_{|_{\partial \mathcal B}} = 0 \text{ and } \vv{u}_{|_{\partial \mathcal B_*}} = \vv{u}^* \}\,.
$$
We recall that this set is not empty as soon as $\vv{u}^*$ prescribes no flux through $\partial \Omega$ and that we have
the identity $V[0] = V_0$ (see \cite[Theorem II.7.1 (i)]{Galdibooknew}).

\medskip

Following \cite{Galdibooknew} we introduce the definition of generalized solution to \stokes:
\begin{definition}
Let $\vv{u}^* \in H^{\frac 12}(\partial \Omega)$ we call generalized solution to {\em \stokes} any 
$\vv{u} \in V[\vv{u}^*]$ such that 
\begin{equation} \label{eq_Stokesweak}
\int_{\mathcal F} \nabla \vv{u} : \nabla \vv{v} = 0\,, \quad \forall \, \vv{v} \in V_0\,.  
\end{equation}
\end{definition} 
\noindent and we have the classical theorem (see \cite[Theorem V.2.1 and Theorem V.1.1]{Galdibooknew}): 
\begin{theorem} \label{thm_galdi}
Given $\vv{u}^* \in H^{\frac 12}(\partial \Omega),$ such that :
\begin{equation} \label{eq_noflux}
\int_{\partial \Omega} \vv{u}^* \cdot \vv{n}{\rm d}\sigma = 0\,,
\end{equation}
there exists a unique generalized solution $\vv{u}$ to {\em \stokes.} Moreover, 
\begin{itemize}
\item there exists $p \in L^2_{loc}(\mathcal F)$ such that  \eqref{eq_stokes1} holds in 
the sense of distributions,\\[-10pt]
\item if $\vv{u}^* \in C^{\infty}(\partial  \Omega)$ there holds
$(\vv{u},p)  \in C^{\infty}(\overline{\mathcal F} ; \mathbb R^4)\,.$
\end{itemize}
\end{theorem}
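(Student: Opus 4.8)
The plan is to follow the variational route recalled in \cite{Galdibooknew}. First I would establish existence and uniqueness of the generalized solution by a Riesz-representation argument. Since $\vv{u}^*$ prescribes no flux through $\partial \Omega$, the affine set $V[\vv{u}^*]$ is nonempty; fix a lifting $\vv{U} \in V[\vv{u}^*]$ and look for $\vv{u} = \vv{U} + \vv{w}$ with $\vv{w} \in V_0 = V[0]$. The weak formulation \eqref{eq_Stokesweak} then reads $((\vv{w},\vv{v})) = -((\vv{U},\vv{v}))$ for every $\vv{v} \in V_0$; the left-hand side is exactly the scalar product of the Hilbert space $V_0$ and the right-hand side is a bounded linear functional on it (because $\nabla \vv{U} \in L^2(\mathcal F)$), so there is a unique such $\vv{w}$, hence a unique $\vv{u}$. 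Uniqueness among all generalized solutions is then immediate, since the difference of two of them lies in $V_0$ and is $((\cdot,\cdot))$-orthogonal to $V_0$.

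Next I would recover the pressure. The identity \eqref{eq_Stokesweak} says that the distribution $\Delta \vv{u}$ annihilates every divergence-free test field in $C^{\infty}_c(\mathcal F)$; by De Rham's lemma there is a distribution $p$ with $\Delta \vv{u} = \nabla p$, i.e. \eqref{eq_stokes1} holds in $\mathcal D'(\mathcal F)$. To upgrade $p$ to $L^2_{loc}(\mathcal F)$ one argues locally: on any ball $B$ with $\overline B \subset \mathcal F$ one uses the surjectivity of the divergence from $H^1_0(B)$ onto the mean-zero subspace of $L^2(B)$ (a Bogovskii / Necas-type estimate) to bound $\|p - \bar p_B\|_{L^2(B)}$ by $\|\nabla p\|_{H^{-1}(B)} = \|\Delta \vv{u}\|_{H^{-1}(B)} < \infty$.

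Finally, for the regularity statement I would bootstrap through the linear elliptic theory for the Stokes system. Interior $L^q$ (Cattabriga--Solonnikov) or Agmon--Douglis--Nirenberg estimates give $(\vv{u},p) \in C^{\infty}(\mathcal F)$; near the smooth boundary $\partial \mathcal F = \partial \Omega \cup \partial \mathcal B$, with $\vv{u}^* \in C^{\infty}(\partial \Omega)$ and $\vv{u} = \mathbf 0$ on $\partial \mathcal B$, the boundary regularity theory for Stokes yields $(\vv{u},p) \in C^{\infty}(\overline{\mathcal F};\mathbb R^4)$, the pressure being determined up to an additive constant. When $\Omega$ is an exterior domain the regularity is a purely local matter near the compact boundary, so the same argument applies; the behavior at infinity is already encoded in the $D^{1,2}(\mathcal F)$ framework, which embeds $V$ into $L^6(\mathcal F)$ and hence forces $\vv{u}$ to vanish at infinity in the weak sense required by \eqref{eq_stokes5}.

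The step I expect to be most delicate is precisely the exterior-domain bookkeeping: one cannot work in $H^1(\mathcal F)$ (no Poincar\'e inequality, solutions need not be square-integrable), so the whole construction must take place in the homogeneous space $D^{1,2}(\mathcal F)$, and one must check that the flux condition \eqref{eq_noflux} is exactly what makes the lifting $\vv{U}$ exist in that space, and that the recovered pressure, though only $L^2_{loc}$, is globally consistent. These are the points developed carefully in \cite{Galdibooknew}, which is why we content ourselves here with the reference.
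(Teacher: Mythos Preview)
Your outline is correct and matches the standard approach. Note, however, that the paper itself does not give a proof of this theorem: it is stated as a classical result with a reference to \cite[Theorem V.2.1 and Theorem V.1.1]{Galdibooknew}, and the paper only briefly alludes to the Stampacchia-theorem route when introducing the variational characterization in Proposition~\ref{prop_varcar}. Your sketch (lifting plus Riesz/Stampacchia, De~Rham for the pressure, Stokes regularity up to the boundary) is exactly the argument developed in that reference, so there is nothing to compare.
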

A consequence of this result is that generalized solutions with smooth data are classical solutions.
As we only consider this particular case throughout the paper, we drop the adjective "generalized" in what follows. 
Also, we abusively call solution to \stokes\, a velocity-field $\vv{u},$ even though a solution to the Stokes problem
is a pair velocity/pressure $(\vv{u},p).$ 

\subsection{Main results} The main contribution of this paper is the following theorem: 
\begin{theorem} \label{thm_mainexact}
Assume {\em (A1)--(A3)} and boundary condition $\mathbf{u}^* \in C^{\infty}(\partial \Omega)$ satisfies \eqref{eq_noflux}. Let denote by $\vv{u} \in V$ the unique associated solution to {\em \stokes.} If $h<1,$ there exists $\vv{v} \in  V$ and a constant $K := K(R_1,C_b,\delta,\partial \Omega)$ such that:
\begin{itemize}
\item $\vv{v}$ is given by \eqref{eq_vx}-\eqref{eq_vy} in  $\mathcal G_{L/2}$\\[-10pt]
\item If $\|\gamma_t;  C^3(B(\mathbf 0,L)) \| + \|\gamma_b ; C^3(B(\mathbf 0,L)) \| \leq C_b$ and $\|\vv{u}^* ; H^3(B(\mathbf 0,L))\| + \|\vv{u}^* ; H^{\frac 12}(\partial \Omega) \| \leq C_b$ there holds:
$$
\| \vv{u} - \vv{v} ; V \|  \leq K \left( 1+ |u^*_z(\mathbf 0)| |\ln(h)|^{\frac 12}  \right).
$$
\end{itemize} 
If the distance function $\gamma$ is moreover radial, we have the following improvement of the second assertion:
$$
\| \vv{u} - \vv{v} ; V \|  \leq K.
$$
\end{theorem}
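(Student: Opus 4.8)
\textbf{Proof strategy for Theorem~\ref{thm_mainexact}.} The plan is to treat $\vv v$ as a divergence-free test function which almost solves the Stokes equations, and to control the defect $\vv u-\vv v$ by a Galerkin/energy argument. First I would extend the explicit field $\vv v$ given by \eqref{eq_vx}-\eqref{eq_vy} from the aperture domain $\mathcal G_{L/2}$ to a global divergence-free field on $\mathcal F$ belonging to $V[\vv u^*]$. In the region $B(\mathbf 0,L/2)$ one uses the formulas directly; one checks from the definition of $v_\bot$ (written as a vertical integral of $\mathrm{div}_{x,y}$ of the tangential part) that $\nabla\cdot\vv v=0$ there, and that the boundary conditions on $\partial\Omega\cap\mathcal U$ and $\partial\mathcal B\cap\mathcal U$ hold to the required order because $q$ solves \eqref{eq_q}-\eqref{cab_q} with the correct right-hand side and because $\vv v^*_\sslash$ and $v^*_\bot$ encode precisely the zeroth- and first-order Taylor data of $\vv u^*$ at the origin. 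Away from the contact region, where $\mathrm{dist}(\partial\Omega,\mathcal B)>\delta$ by (A3), one glues $\vv v$ to a fixed $H^2$ solenoidal extension of $\vv u^*$ using a cut-off $\chi$ supported in $\{L/4<|(x,y)|<L/2\}$ and correcting the divergence created by $\nabla\chi$ via a Bogovskii operator on that fixed annular domain (whose norm is bounded by $K(R_1,C_b,\delta,\partial\Omega)$, independently of $h$). The result is $\vv v\in V$ with $\vv v|_{\partial\mathcal B}=0$, $\vv v|_{\partial\Omega}=\vv u^*$.

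Next, since $\vv u-\vv v\in V_0$ and $\vv u$ is the generalized solution, testing \eqref{eq_Stokesweak} against $\vv u-\vv v$ gives the orthogonality
\begin{equation}
\|\vv u-\vv v;V\|^2 = \int_{\mathcal F}\nabla(\vv v-\vv u):\nabla(\vv v-\vv u) = -\int_{\mathcal F}\nabla \vv v:\nabla(\vv u-\vv v) + \int_{\mathcal F}\nabla\vv u:\nabla(\vv u - \vv v),
\end{equation}
so that in fact $\|\vv u-\vv v;V\|=\min\{\|\vv w-\vv v;V\|:\vv w\in V[\vv u^*]\}\le \|\vv w_0-\vv v;V\|$ for the particular Galdi extension $\vv w_0$; equivalently $\|\vv u-\vv v;V\|\le \mathrm{dist}_{V}(\vv v, V[\vv u^*]+\ker)$. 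The cleanest route, though, is: let $\vv w$ be any fixed element of $V[\vv u^*]$ with $\|\vv w;V\|\le K$ (it exists by Theorem~\ref{thm_galdi}-type bounds and the no-flux condition \eqref{eq_noflux}), write $\vv u-\vv v=(\vv u-\vv w)+(\vv w-\vv v)$ with $\vv w-\vv v\in V_0$, and use the variational characterization to get $\|\vv u-\vv v;V\|\le 2\|\vv w-\vv v;V\|$; but since $\vv w$ costs only $O(1)$ it remains to estimate $\|\vv v;V\|$ on $\mathcal G_{L/2}$, the only $h$-dependent piece. Here I would scale to the tilde variables of \eqref{eq_u1}-\eqref{eq_p4} (with $\alpha$ matching the decomposition: the $v^*_\bot$ part behaves as $\alpha=0$, the $\vv v^*_\sslash$ part as $\alpha=1/2$), so that $\|\nabla\vv v;L^2(\mathcal G_{L/2})\|^2$ becomes, up to $h$-powers that happen to be neutral, $\|\nabla\tilde{\vv v};L^2(\tilde{\mathcal G}_h)\|^2$, and the latter reduces by the explicit $\tilde z$-dependence to weighted $L^2$ norms of $\nabla q$ and $\nabla^2 q$ against powers of $\gamma$. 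The required estimates on $q$ — uniform in $h$, with a borderline logarithmic growth coming from the Couette-type term $\tfrac{h+\gamma_t-z}{\gamma}\vv v^*_\sslash$ and from $\mathrm{div}_{x,y}(\gamma^3\nabla q)$ near $\{\gamma\sim h\}$ — are exactly the content of Section~\ref{sec_Gilbarg}, which I would invoke as a black box: they give $\|\vv v;V\|\le K(1+|u_z^*(\mathbf 0)||\ln h|^{1/2})$, and $\|\vv v;V\|\le K$ when $\gamma$ is radial because then the pressure $q$ and its gradient obey improved (non-logarithmic) bounds.

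The main obstacle is twofold. First, one must verify that $\vv v$ as globally extended is genuinely an admissible competitor in $V[\vv u^*]$ with an energy that is \emph{not} worse than the claimed bound — in particular that the Bogovskii correction and the cut-off transition region contribute only $O(1)$ (this is where uniformity in $h$ is delicate: the correction lives on a fixed domain but $\vv v$ itself may be large there, so one must check that $\vv v$ is already $O(1)$ in $C^1$ on $\{|(x,y)|\ge L/4\}$, which follows from the $q$-estimates since $\gamma\ge c>0$ away from the origin). Second, and genuinely the crux, is the passage from the energy of the \emph{smooth} exact solution to that of the \emph{scaled-ansatz} field: the identity $\|\vv u-\vv v;V\|^2\le$ (something involving only $\vv v$ and a fixed extension) is elementary, but one also needs the matching lower-order consistency, i.e.\ that $\vv v$ approximately satisfies $\Delta\vv v-\nabla p^{\vv v}=O(h^{\text{small}})$ in the right dual norm so that no extra term sneaks in — this is where the choice \eqref{eq_convention} of $\vv v^*_\sslash,v^*_\bot$ as only the first Taylor jet of $\vv u^*$ is used, the higher-order remainder of $\vv u^*$ being absorbed into the $O(1)$ fixed extension. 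I would organize the write-up as: (i) construction and admissibility of $\vv v$; (ii) the abstract variational bound $\|\vv u-\vv v;V\|\le 2\,\mathrm{dist}_V(\vv v,V[\vv u^*])$ plus a fixed $O(1)$ comparison field; (iii) reduction of the remaining norm to weighted $q$-norms via the lubrication scaling; (iv) insertion of the Section~\ref{sec_Gilbarg} estimates, including the radial improvement.
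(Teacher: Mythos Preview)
Your proposal has a genuine and fatal gap in its core mechanism. The variational comparison you propose cannot work, for two independent reasons.

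First, the claimed inequality $\|\vv u-\vv v;V\|\le 2\|\vv w-\vv v;V\|$ does \emph{not} follow from the variational characterization. The Stokes solution $\vv u$ is the element of $V[\vv u^*]$ closest to the origin, not closest to $\vv v$; since $\vv v$ itself lies in $V[\vv u^*]$, the best you get from Proposition~\ref{prop_varcar} and orthogonality is the Pythagorean identity $\|\vv u-\vv v;V\|^2=\|\vv v;V\|^2-\|\vv u;V\|^2$, which is useless here. Second, and more fundamentally, there is \emph{no} fixed $\vv w\in V[\vv u^*]$ with $\|\vv w;V\|\le K$ uniformly in $h$: by the variational characterization $\|\vv u;V\|$ is the minimum of such norms, and Theorem~\ref{cor_mainexact} shows $\|\vv u;V\|^2\sim |u^*_\bot(\mathbf 0)|^2/h$. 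For the same reason your claim ``$\|\vv v;V\|\le K(1+|u_z^*(\mathbf 0)||\ln h|^{1/2})$'' is false: $\|\vv v;V\|$ itself diverges like $h^{-1/2}$. The whole point of the theorem is that the \emph{difference} $\vv u-\vv v$ is much smaller than either term separately, and no amount of triangle inequalities on competitors will see this cancellation.

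The actual mechanism is the one you dismiss as a side issue: the consistency criterion of Proposition~\ref{prop_car2}, namely $\|\vv u-\vv v;V\|\le \|\Delta\vv v-\nabla q;V_0^*\|$. This is where the smallness comes from --- by construction $\partial_{zz}\vv v_{\sslash}=\nabla_{x,y}q$ and $\partial_z q=0$, so the residual $\Delta\vv v-\nabla q$ contains only the ``tangential'' pieces $\Delta_{x,y}\vv v_{\sslash}$ and $\Delta v_\bot$, whose contribution in the dual norm is governed by $\int\gamma^{3+n}|\nabla^{k}q|^2$ with $n\ge 1$ rather than $n=0$. The paper then splits $\vv u^*=\vv u^*_{as}+\vv u^*_R$ (first Taylor jet plus remainder), bounds $\|\vv u_R;V\|$ by the energy of $\vv v_R$ via Proposition~\ref{prop_varcar} (this is where the variational bound \emph{is} used, and it works because the remainder data satisfy the favorable conditions of Proposition~\ref{prop_favorable}), and bounds $\|\vv u_{as}-\vv v_{as};V\|$ via Proposition~\ref{prop_car2} combined with Proposition~\ref{prop_reg2} (case $n=1$), which is exactly where the $|u^*_z(\mathbf 0)|^2|\ln h|$ term appears. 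The radial improvement is \emph{not} because $q$ obeys better bounds --- it is a structural cancellation: when $\gamma$ is radial, $\vv v_{0,in,\sslash}$ points along $\vv e_r$, so $\nabla_{x,y}\mathrm{div}_{x,y}\vv v_{0,in,\sslash}=\Delta_{x,y}\vv v_{0,in,\sslash}$, which allows an extra integration by parts and a Hardy inequality in $z$ (Proposition~21 in the paper) that kills the logarithm.
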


We remark that $\vv{u}$ and $\vv{v}$ are bounded uniformly in $h$ as along as this parameter ranges a compact subset of $(0,1].$ Consequently, this theorem brings relevant informations in the limit $h \to 0.$ It shows in particular that even though the fluid is incompressible (so that the Stokes
problem is non-local), when $h$ goes to $0$ the leading order of the 
velocity-field is completely fixed by the boundary conditions and the geometric properties of the boundaries in the aperture $\mathcal G_{L}$.
It will be clear from the example given below that the velocity-field $\vv{v}$ we exhibit is larger than the remainder in the sense of the $V$-norm. We mention
that our first interest in this problem was to compute the diverging term in the case of spheres. As the remainder is bounded in this radial case,  we did not look for a more complicated expansion of the solution. Nevertheless, a corollary of our method is the construction of the linear problem on which could rely the computation of a full expansion of the solution in terms of $h.$ However, we would only describe the solution in the gap $\mathcal G_L$ and the computation of $O(1)$-terms would require  another tool enabling to compute the expansion of the solution outside the gap also. 

\medskip

To highlight the relevance of the above theorem, we detail the case where $\mathcal B$ is a sphere and $\Omega =\mathbb R^3 \setminus \mathcal B^*$ with $\mathcal B^*$ a sphere, as in the computations in the introduction.
Keeping the convention that $\mathcal B^*$ has radius $R$ and $\mathcal B$ has radius $S,$ we have that, close to the origin, the functions $\gamma_t$ and $\gamma_b$ satisfy:
\begin{eqnarray*}
\gamma_t(x,y) &=& \dfrac{x^2 + y^2}{2S} + O((x^2+y^2)^{\frac 32})\,, \\
\gamma_b(x,y) &=& - \dfrac{x^2 + y^2}{2R} + O((x^2+y^2)^{\frac 32})\,, \\   
\end{eqnarray*}
and also that $\gamma$ satisfies:
\begin{eqnarray*}
\gamma(x,y) &=& \dfrac{x^2 + y^2}{2R_1} + \dfrac{(x^2+y^2)^2}{8R_3^3} + O((x^2+y^2)^{3})   
 \end{eqnarray*}
with $(R_1,R_3) \in (0,\infty)^2$ given by:
\begin{equation} \label{eq_R}
\dfrac{1}{R_1} = \dfrac{1}{S}  +  \dfrac{1}{R}\,, \quad \dfrac{1}{R_3^3} = \dfrac{1}{S^3} +   \dfrac{1}{R^3}\,.
\end{equation}
In this case of spheres, we obtain:
\begin{theorem} \label{cor_mainexact}
Given a boundary condition $\mathbf{u}^* \in C^{\infty}(\partial \Omega)$ satisfying \eqref{eq_noflux} there exists a constant $K[R,S]$ depending only on $R,S$ so that if  $h < 1,$ the unique solution $\vv{u} \in V$ to the associated Stokes problem {\em \stokes,} satisfies:
\begin{eqnarray*}
\|\vv{u} ; V\|^2 &=& {6\pi |u^*_{\bot}(\mathbf 0)|^2}\left[\dfrac{R_1^2}{h} +   \left(\dfrac{16 R_1}{5}    -  \dfrac{8 R_1^3}{RS}  -  \dfrac{3  R_1^4}{R_3^3}   \right) |\ln(h)| \right]  \\[6pt] 
&& \quad + 
  \left( 2\pi R_1  |\vv{u}^*_{\sslash\!}(\mathbf 0)|^2 + \dfrac{24 \pi}{5} R_1  | R_1\nabla_{x,y} u_{\bot}^*(\mathbf 0) +\dfrac {(S-R)}{2(R+S)} \vv{u}^*_{\sslash\!}(\mathbf 0)|^2 \right) |\ln(h)|   \\[4pt]
&& \quad + \,   K[R,S](\|\vv{u}^* ; H^3(B(\mathbf 0,L))\|^2 + \|\vv{u}^* ; H^{\frac 12},\partial \Omega\|^2 )\,.
 \end{eqnarray*}
\end{theorem}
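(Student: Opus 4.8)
The plan is to obtain the identity from the energy orthogonality satisfied by the Stokes solution together with an explicit evaluation of $\|\vv{v};V\|^2$. For two spheres the gap $\gamma=h+\gamma_t-\gamma_b$ is radial, so Theorem~\ref{thm_mainexact} applies in its sharper form $\|\vv{u}-\vv{v};V\|\le K$; since both $\vv{u}$ (by linearity of the Stokes problem) and $\vv{v}$ (by the explicit formulae \eqref{eq_vx}--\eqref{eq_vy} and the linearity of \eqref{eq_q}--\eqref{cab_q}) depend linearly on $\vv{u}^*$, this upgrades by homogeneity to $\|\vv{u}-\vv{v};V\|\le K[R,S]\bigl(\|\vv{u}^*;H^3(B(\mathbf 0,L))\|+\|\vv{u}^*;H^{\frac12}(\partial\Omega)\|\bigr)$. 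Since $\vv{v}\in V[\vv{u}^*]$ and $\vv{u}$ is the energy-minimizing generalized solution, $\vv{u}-\vv{v}\in V_0$ and \eqref{eq_Stokesweak} yields the Pythagorean relation $\|\vv{v};V\|^2=\|\vv{u};V\|^2+\|\vv{u}-\vv{v};V\|^2$. Hence $\|\vv{u};V\|^2=\|\vv{v};V\|^2$ up to a quantity absorbed in the $K[R,S](\cdots)$ remainder, and the whole task reduces to computing $\|\vv{v};V\|^2=\int_{\mathcal F}|\nabla\vv{v}|^2$ to that precision.

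I then localize: away from the origin $\gamma$ and $\mathrm{dist}(\partial\mathcal B,\partial\Omega)$ stay bounded below uniformly in $h$, so by Section~\ref{sec_Gilbarg} and the construction of Section~\ref{sec_asvf} the contribution of $\mathcal F\setminus\mathcal G_{L/2}$ is $O\bigl(K[R,S](\text{data norms})^2\bigr)$ and is absorbed. In $\mathcal G_{L/2}$ the $z$-integration in \eqref{eq_vx}--\eqref{eq_vy} is carried out explicitly: with $\tau=(z-\gamma_b)/\gamma\in(0,1)$ everything is polynomial, the $\tau$-odd cross terms vanish, and for instance $\int_{\gamma_b}^{h+\gamma_t}|\partial_z\vv{v}_{\sslash}|^2\,\mathrm{d}z=\tfrac{\gamma^3}{12}|\nabla_{x,y}q|^2+\tfrac1\gamma|\vv{v}^*_{\sslash}|^2$. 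Thus $\int_{\mathcal G_{L/2}}|\nabla\vv{v}|^2$ becomes a finite sum of planar integrals over $B(\mathbf 0,L/2)$ built from $q,\nabla_{x,y}q,\nabla^2_{x,y}q$ and from $\gamma,\gamma_t,\gamma_b$; the leading $h^{-1}$ comes from $\int_B\tfrac{\gamma^3}{12}|\nabla_{x,y}q|^2$, while the $|\ln h|$ terms come from the logarithmic blow-up of several of these integrals on the annulus $h^{1/2}\lesssim r\lesssim L$ (equivalently, from the far field in the lubrication variables).

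To name the coefficients I split, by linearity of \eqref{eq_q}--\eqref{cab_q}, $q=q^{(0)}+q^{(1)}+q^{(r)}$: $q^{(0)}$ has constant source $u^*_{\bot}(\mathbf 0)$, hence is radial with $\partial_r q^{(0)}=-6u^*_{\bot}(\mathbf 0)\,r/\gamma^3$; $q^{(1)}$ has first-order source $\vv d\cdot(x,y)$ with $\vv d=\nabla_{x,y}u^*_{\bot}(\mathbf 0)+\tfrac12\bigl(\tfrac1R-\tfrac1S\bigr)\vv v^*_{\sslash}$ — the linear part of $v^*_{\bot}-\tfrac12\mathrm{div}_{x,y}((\gamma_t+\gamma_b)\vv v^*_{\sslash})$, using $\gamma_t+\gamma_b=\bigl(\tfrac1{2S}-\tfrac1{2R}\bigr)(x^2+y^2)+O(r^3)$ — so $q^{(1)}=\phi(r)\cos(\theta-\theta_0)$ with $\phi$ solving the Euler-type equation $r^2\phi''+7r\phi'-\phi=-96R_1^3|\vv d|\,r^{-3}$ in the zone $\gamma\approx r^2/(2R_1)$, whose particular solution $\phi_p=\tfrac{48R_1^3|\vv d|}{5}\,r^{-3}$ controls the $|\ln h|$; and $q^{(r)}$, whose source vanishes to second order, contributes only at the $O((\text{data norms})^2)$ level by Section~\ref{sec_Gilbarg}. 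Angular parity kills the $q^{(0)}$--$q^{(1)}$ cross terms. Inserting $\gamma=\tfrac{r^2}{2R_1}+\tfrac{r^4}{8R_3^3}+O(r^6)$, $\gamma_t=\tfrac{r^2}{2S}+O(r^4)$, $\gamma_b=-\tfrac{r^2}{2R}+O(r^4)$ and evaluating the resulting elementary radial integrals gives, modulo $O(1)$: $\int_B\tfrac{\gamma^3}{12}|\nabla_{x,y}q^{(0)}|^2=3|u^*_{\bot}(\mathbf 0)|^2\int_B r^2\gamma^{-3}=\tfrac{6\pi R_1^2}{h}|u^*_{\bot}(\mathbf 0)|^2-\tfrac{18\pi R_1^4}{R_3^3}|u^*_{\bot}(\mathbf 0)|^2|\ln h|$; $\int_B\gamma^{-1}|\vv v^*_{\sslash}|^2=2\pi R_1|\vv u^*_{\sslash}(\mathbf 0)|^2|\ln h|$; $\int_B\tfrac{\gamma^3}{12}|\nabla_{x,y}q^{(1)}|^2=\tfrac{24\pi}{5}R_1\,|R_1\vv d|^2|\ln h|$ with $R_1\vv d=R_1\nabla_{x,y}u^*_{\bot}(\mathbf 0)+\tfrac{S-R}{2(R+S)}\vv u^*_{\sslash}(\mathbf 0)$; and the remaining $q^{(0)}$-built tangential and $\partial_z v_{\bot}$ terms sum to $\bigl(\tfrac{96\pi R_1}{5}-\tfrac{48\pi R_1^3}{RS}\bigr)|u^*_{\bot}(\mathbf 0)|^2|\ln h|$, the individually non-symmetric pieces recombining into this symmetric coefficient through $\gamma_t'=r/S$, $\gamma_b'=-r/R$ and $(\tfrac1R+\tfrac1S)^2=R_1^{-2}$. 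Adding the four contributions and bounding every $O(1)$ leftover by $|u^*_{\bot}(\mathbf 0)|^2+|\vv u^*_{\sslash}(\mathbf 0)|^2+|\nabla_{x,y}u^*_{\bot}(\mathbf 0)|^2\le C(R,S)(\text{data norms})^2$ (Sobolev embedding $H^3(B(\mathbf 0,L))\hookrightarrow C^1$) gives the claimed identity.

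The main obstacle is this last step: the uniform-in-$h$ a priori bounds of Section~\ref{sec_Gilbarg} justifying the truncation $q\simeq q^{(0)}+q^{(1)}$ with $O((\text{data norms})^2)$ error, the precise near-origin asymptotics of $q^{(1)}$, and above all the bookkeeping that separates the $O(h^{-1})$, $O(|\ln h|)$ and $O(1)$ parts across all the derivative terms and checks that the apparently asymmetric partial contributions recombine into the stated symmetric $|\ln h|$-coefficient. The rest is routine computation.
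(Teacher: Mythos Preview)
Your overall strategy --- Pythagorean identity plus explicit evaluation of $\|\vv{v};V\|^2$ via the splitting $q=q^{(0)}+q^{(1)}+q^{(r)}$ --- is exactly the paper's, and your explicit integrals are correct. But the Pythagorean step has a real gap.

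You claim $\vv{v}\in V[\vv{u}^*]$. This is false for the $\vv{v}$ produced by Theorem~\ref{thm_mainexact}: the formulae \eqref{eq_vx}--\eqref{eq_vy} are fed by the \emph{truncated} data $\vv{v}^*_{\sslash}=\vv{u}^*_{\sslash}(\mathbf 0)$ and $v^*_{\bot}$ from \eqref{eq_convention}, so on $\partial\Omega\cap\partial\mathcal G_{L/2}$ one gets $\vv{v}_{|\partial\Omega}=\vv{u}^*_{as}$, not $\vv{u}^*$. Hence $\vv{u}-\vv{v}\notin V_0$ and \eqref{eq_Stokesweak} does not kill the cross term. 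Without orthogonality you are left with $2((\vv{v},\vv{u}-\vv{v}))$, which by Cauchy--Schwarz is only $O(h^{-1/2}\cdot\text{data})$; this swamps the $|\ln h|$ coefficients you are trying to resolve.

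The paper repairs this by working instead with the full approximation $\vv{v}[\vv{u}^*]=\vv{v}_0+\vv{v}_1+\vv{v}_R$ of Section~\ref{sec_asvf}, which \emph{is} in $V[\vv{u}^*]$, so Pythagoras applies exactly. The price is that one must then show $E_R$, $E_{0R}$ and $E_{1R}$ are all $O(\text{data}^2)$. The bound on $E_R$ is your $q^{(r)}$ argument, and $E_{01}=0$ by your parity observation; but $E_{0R}$ is genuinely delicate --- a naive Cauchy--Schwarz gives only $\|\vv{v}_0\|\,\|\vv{v}_R\|=O(h^{-1/2})$. The paper handles it by an integration by parts that moves two tangential derivatives onto $\vv{v}_{R,\sslash}$ and then exploits the refined weighted estimates on $q_R$ (Propositions~\ref{prop_favorable} and \ref{prop_reg}) together with the pointwise bound $|\vv{v}_{0,\sslash}|\le C\,r/\gamma$. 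This is the missing piece in your argument; once you insert it, everything else you wrote goes through.
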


\medskip
 
The norms that we compute in this theorem are related to the forces and torques exerted by the fluid flow on the spheres. 
Indeed, assume that the sphere $\mathcal B$ is moving with a rigid velocity and that we want to compute the
forces and torques exerted by a Stokes flow on $\mathcal B$ when the distance to the other sphere $\mathcal B^*$ is small.
By symmetry, we can assume that $\mathcal B^*$ is moving with a rigid velocity $\vv{u}^*$ and compute the force and torque exerted on $\mathcal B^*.$
Then, we recall that the set of rigid velocities is a 6-dimensional vector space whose elements are characterized by a translation
and angular velocity (computed with respect to the origin for simplicity). Let denote by 
$(\vv{U}_{\to,i},P_{\to,i})_{i=1,2,3}$ (resp. $(\vv{U}_{\circlearrowright,i},P_{\circlearrowright,i})_{i=1,2,3}$) the solutions to the Stokes problem
on $\mathcal F$ with elementary translational (resp. rotational) boundary conditions
$$
\begin{array}{rcll}
\vv{U}_{\to,i} &=& \vv{e}_i\,, & \text{ on $\partial \mathcal B^*$}\,, \\
\vv{U}_{\to,i} &=& \vv{0}\,, & \text{ on $\partial \mathcal B$}\,, \\  
\end{array}
\quad 
\left(
\text{ resp. }
\begin{array}{rcll}
\vv{U}_{\circlearrowright,i} &=& \vv{e}_i \times \vv{x}\,, & \text{ on $\partial \mathcal B^*$}\,, \\
\vv{U}_{\circlearrowright,i} &=& \vv{0}\,, & \text{ on $\partial \mathcal B$}\,, \\  
\end{array}
\right)
\quad \text{ for $i=1,2,3\,.$}
$$ 
Due to the linearity of the Stokes problem \stokes, the solution $(\vv{u},p)$ associated with boundary condition $\vv{u}^*$ is then a combination
of the $(\vv{U}_{\to,i},P_{\to,i})_{i=1,2,3}$ and $(\vv{U}_{\circlearrowright,i},P_{\circlearrowright,i})_{i=1,2,3}.$ Furthermore, straightforward integration 
by parts yield that the $j^{th}$ component of the force exerted by the flow on $\mathcal B^*$ reads:
\begin{eqnarray*}
F_j := \int_{\partial \mathcal B^*} (2D(\vv{u})- p\mathbb{I}_3)\vv{\nu} {\rm d}\sigma \cdot \vv{e}_j &=&  2 \int_{\mathcal F}  D(\vv{u}) : D(\vv{U}_{\to,j})    \, \\
&=& \int_{\mathcal F} \nabla \vv{u} : \nabla \vv{U}_{\to, j} \,,
\end{eqnarray*}
(here $\vv{\nu}$ stands for the normal to $\partial \mathcal B^*$ pointing towards $\mathcal B^*$). If $\vv{u}^* = \vv{e}_j,$ we obtain:
$$
F_j = \int_{\mathcal F} |\nabla \vv{U}_{\to,j}|^2\,,
$$
and the asymptotic behavior or $F_j$ when $h \to 0$ yields by applying the above theorem. 
By standard algebraic formulas this identity is generalized to all components of the forces and torques associated with any rigid velocity $\vv{u}^*.$
Then, one can see the asymptotic expansions we compute in Theorem \ref{cor_mainexact} as a justification and an improvement of the asymptotic values for the matrix $\mathcal K$ provided in \cite[Section 7]{Cox74} (see [7.6]).

\subsection{Two approximation criterions}
We conclude this section by providing two criterions which will enable us to measure the distance between an approximation $\vv{v}$ and the exact solution $\vv{u}$ of the Stokes problem \stokes.

\medskip

First, we recall that one way to prove the existence part of {Theorem \ref{thm_galdi}} is to construct $\vv{u}_{bdy} \in V[\vv{u}^*]$ and to remark that we have $V[\vv{u}^*] = \vv{u}_{bdy} + V_0.$ Existence and uniqueness of a generalized solution then yields from an
application of the Stampacchia theorem. This proof entails the expected result together with the following variational characterization:
\begin{proposition} \label{prop_varcar}
Given $\vv{u}^* \in C^{\infty}(\partial \Omega)$ satisfying \eqref{eq_noflux}, 
 the solution $\vv{u}$  to {\em \stokes} realizes:
$$
\|\vv{u};V\|^2 = \min \left\{ \int_{\mathcal F} |\nabla \vv{v}|^2\,, \quad { \vv{v} \in V[\vv{u}^*]} \right\}\,.
$$ 
\end{proposition}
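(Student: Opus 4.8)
The plan is to use the standard Lax--Milgram / Stampacchia variational setup on the affine space $V[\vv{u}^*]$, which is exactly the framework recalled just before the statement. First I would fix a particular lifting $\vv{u}_{bdy}\in V[\vv{u}^*]$; such a lifting exists precisely because $\vv{u}^*$ satisfies the no-flux condition \eqref{eq_noflux} (this is the content of the cited result from \cite{Galdibooknew}, together with $V[0]=V_0$). Then $V[\vv{u}^*]=\vv{u}_{bdy}+V_0$ is a closed affine subspace of the Hilbert space $(V,\|\cdot;V\|)$, and minimizing $\vv{v}\mapsto\int_{\mathcal F}|\nabla\vv{v}|^2=\|\vv{v};V\|^2$ over $V[\vv{u}^*]$ is the problem of finding the point of minimal norm in a closed affine subspace of a Hilbert space. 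By the projection theorem this minimizer exists, is unique, and is characterized by the Euler--Lagrange condition that the gradient of the functional at the minimizer be orthogonal to the tangent space $V_0$.

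Next I would write out that Euler--Lagrange condition: writing $\vv{v}=\vv{w}+t\vv{\varphi}$ with $\vv{w}$ the minimizer, $\vv{\varphi}\in V_0$ arbitrary and $t\in\mathbb R$, and differentiating $\|\vv{w}+t\vv{\varphi};V\|^2$ at $t=0$, one gets
\[
\int_{\mathcal F}\nabla\vv{w}:\nabla\vv{\varphi}=0\qquad\text{for all }\vv{\varphi}\in V_0,
\]
which is exactly \eqref{eq_Stokesweak}. Hence the norm-minimizer over $V[\vv{u}^*]$ is a generalized solution to \stokes\ in the sense of the Definition above. Conversely, uniqueness of the generalized solution (Theorem \ref{thm_galdi}) forces this minimizer to coincide with $\vv{u}$. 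Therefore $\|\vv{u};V\|^2=\|\vv{w};V\|^2=\min\{\int_{\mathcal F}|\nabla\vv{v}|^2:\vv{v}\in V[\vv{u}^*]\}$, which is the claim.

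I do not expect any serious obstacle here: the only non-formal ingredients are (i) the existence of a divergence-free lifting of $\vv{u}^*$ with finite Dirichlet energy under the no-flux condition, and (ii) uniqueness of the generalized solution — both of which are quoted from \cite{Galdibooknew} and may be used freely. If one prefers a direct minimization argument rather than invoking the projection theorem, the mild point to check is that the functional is coercive on $V[\vv{u}^*]$ (immediate, since $\|\vv{v};V\|^2=\|\vv{v}-\vv{u}_{bdy};V\|^2$ up to lower-order cross terms controlled by Cauchy--Schwarz) and weakly lower semicontinuous (immediate, being the square of the Hilbert norm), so a minimizing sequence has a weakly convergent subsequence whose limit lies in the closed affine set $V[\vv{u}^*]$ and attains the infimum; strict convexity of the norm then gives uniqueness. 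Either route is routine, so the emphasis in the write-up should simply be on identifying the Euler--Lagrange equation with \eqref{eq_Stokesweak} and then appealing to uniqueness.
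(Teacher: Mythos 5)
Your argument is correct and is essentially the paper's own: the paper obtains the proposition directly from the Stampacchia/projection-theorem construction of the solution on the closed affine set $V[\vv{u}^*]=\vv{u}_{bdy}+V_0$, and your write-up simply makes explicit the Euler--Lagrange identification with \eqref{eq_Stokesweak} and the appeal to uniqueness. No gaps; nothing further to add.
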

As a consequence, given $\vv{u}^* \in C^{\infty}(\partial \Omega),$ and $\vv{u} \in V$ the generalized solution to \stokes, we have conversely that any $\vv{v} \in V[\vv{u}^*]$ satisfies:
\begin{equation} 
\|\vv{v} \, ; V\|  \geq \|\vv{u}\, ; V\|\,.
\end{equation}
This enables to compute many bounds from above for $\|\vv{u} ; V\|$ by choosing approximations $\vv{v}$. The more relevant the approximation, the sharper the bound. To control the distance between an approximation $\vv{v}$ and the exact solution $\vv{u}$
we have more precisely:
\begin{proposition} \label{prop_car2}
Let $\vv{u}^*\in C^{\infty}(\partial \Omega)$ satisfy  \eqref{eq_noflux} and denote by $\vv{u} \in V$ the solution to {\em \stokes}. Given $({\vv v},q) \in  V[\vv{u}^*]  \times C^{\infty}(\mathcal F)$ we denote:
$$
C[{\vv{v}},{q}] := \|\Delta {\vv v} - \nabla q ; V_0^*\|\,, 
$$
{\em i.e.} $C[{\vv{v}},{q}]$ is the best constant C such that:
\begin{equation} \label{eq_defC}
\left| _{{\mathcal D}'(\mathcal F)}\langle \left( \Delta {\vv{v}} - \nabla q \right), \vv{w} \rangle_{\mathcal D(\mathcal F)} \right|  \leq C \|\vv{w}\, ; \, V\|\,,\quad \forall \, \vv{w} \in \mathcal V_0\,.
\end{equation}
 Then, there holds:
 \begin{equation} \label{eq_criteria2}
 \|{\vv v} - \vv{u} \, ;  \, V\| \leq C[{\vv{v}},{q}] \,.
 \end{equation}
\end{proposition}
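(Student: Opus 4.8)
The plan is to run an energy argument on the difference $\vv{w} := \vv{v} - \vv{u}$. Since $\vv{v}$ and $\vv{u}$ both belong to $V[\vv{u}^*]$, the difference $\vv{w}$ belongs to $V[\mathbf 0] = V_0$, so it is admissible as a test function in the weak formulation \eqref{eq_Stokesweak} satisfied by $\vv{u}$. First I would write $\|\vv{w};V\|^2 = \int_{\mathcal F}\nabla\vv{w}:\nabla\vv{w} = \int_{\mathcal F}\nabla\vv{w}:\nabla\vv{v} - \int_{\mathcal F}\nabla\vv{w}:\nabla\vv{u}$ and use \eqref{eq_Stokesweak} to kill the last term, leaving $\|\vv{w};V\|^2 = \int_{\mathcal F}\nabla\vv{w}:\nabla\vv{v}$. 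Since $\vv{v}\in V$ has $\nabla\vv{v}\in L^2(\mathcal F)$, the linear form $\vv{\zeta}\mapsto\int_{\mathcal F}\nabla\vv{\zeta}:\nabla\vv{v}$ is continuous on $V$, so it suffices to evaluate it on an approximating sequence.

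Next I would pick $(\vv{w}_n)_n\subset\mathcal V_0$ with $\vv{w}_n\to\vv{w}$ in $V$ (such a sequence exists precisely because $V_0$ is, by definition, the completion of $\mathcal V_0$). For each $n$, $\vv{w}_n$ is smooth, compactly supported in $\mathcal F$ and divergence-free, so an integration by parts (with no boundary term, and in particular no contribution at infinity in the exterior-domain case) together with $q\in C^\infty(\mathcal F)$ gives
\[
\int_{\mathcal F}\nabla\vv{w}_n:\nabla\vv{v} = -\,{}_{\mathcal D'(\mathcal F)}\langle \Delta\vv{v},\vv{w}_n\rangle_{\mathcal D(\mathcal F)}, \qquad {}_{\mathcal D'(\mathcal F)}\langle \nabla q,\vv{w}_n\rangle_{\mathcal D(\mathcal F)} = -\int_{\mathcal F} q\,\nabla\cdot\vv{w}_n = 0,
\]
hence $\int_{\mathcal F}\nabla\vv{w}_n:\nabla\vv{v} = -\,{}_{\mathcal D'(\mathcal F)}\langle \Delta\vv{v}-\nabla q,\vv{w}_n\rangle_{\mathcal D(\mathcal F)}$. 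By the definition \eqref{eq_defC} of $C[\vv{v},q]$ — read as $+\infty$ when $\Delta\vv{v}-\nabla q\notin V_0^*$, in which case \eqref{eq_criteria2} is vacuous — this pairing is bounded by $C[\vv{v},q]\,\|\vv{w}_n;V\|$. Letting $n\to\infty$ and combining with the first step yields $\|\vv{w};V\|^2 = \int_{\mathcal F}\nabla\vv{w}:\nabla\vv{v}\le C[\vv{v},q]\,\|\vv{w};V\|$, i.e. \eqref{eq_criteria2}.

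The one point deserving care is the passage between the genuine element $\vv{w}\in V_0$ (for which the "integration by parts" and the duality $\langle\Delta\vv{v}-\nabla q,\cdot\rangle$ only make sense through limits) and the classical test fields $\vv{w}_n\in\mathcal V_0$ on which all the above manipulations are elementary; the density of $\mathcal V_0$ in $(V_0,\|\cdot;V\|)$ plus the $V$-continuity of $\vv{\zeta}\mapsto\int_{\mathcal F}\nabla\vv{\zeta}:\nabla\vv{v}$ and of $\vv{\zeta}\mapsto\langle\Delta\vv{v}-\nabla q,\vv{\zeta}\rangle$ (the latter being exactly what finiteness of $C[\vv{v},q]$ provides) bridge the gap. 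Everything else is robust: incompressibility of the $\vv{w}_n$ annihilates the pressure term whatever the normalization of $q$, and compact support removes any issue at infinity.
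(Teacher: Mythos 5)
Your proof is correct and follows essentially the same route as the paper: write $\|\vv{v}-\vv{u};V\|^2$ as $\int_{\mathcal F}\nabla\vv{v}:\nabla(\vv{v}-\vv{u})$ using the weak formulation \eqref{eq_Stokesweak} on $\vv{v}-\vv{u}\in V_0$, then approximate by $\vv{w}_n\in\mathcal V_0$, integrate by parts (the pressure term dying by incompressibility), and invoke the definition of $C[\vv{v},q]$ before passing to the limit. The only cosmetic difference is that the paper writes the duality via $\int(\nabla\vv{v}-q\,\mathbb{I}_3):\nabla\vv{w}_n$ rather than treating $\Delta\vv{v}$ and $\nabla q$ separately, which is the same computation.
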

\begin{proof}
The proof is standard. For completeness, we recall it briefly. There holds:
\begin{eqnarray*}
\| {\vv{v}} - \vv{u}\, ; V\|^2 &=& \int_{\mathcal F} |\nabla {\vv{v}} - \nabla \vv{u}|^2 \\
						&=& \int_{\mathcal F} \nabla {\vv{v}} : \nabla ({\vv{v}}- \vv{u}) - \int_{\mathcal F} \nabla {\vv{u}} :  \nabla ({\vv{v}}- \vv{u}) \,.
\end{eqnarray*}
Applying \eqref{eq_Stokesweak} to $\vv{w} :=  {\vv{v}}- \vv{u} \in V_0,$ we obtain:
$$
 \int_{\mathcal F} \nabla {\vv{u}} :  \nabla ({\vv{v}}- \vv{u}) =0\,.
$$ 
Introducing then a sequence $\vv{w}_n \in \mathcal V_0$ such that $\lim_{n \to \infty} \vv{w}_n = \vv{w} $ in $V,$ we have:
$$
\| {\vv{v}} - \vv{u}\, ; V\|^2 = \lim_{n \to \infty} \int_{\mathcal F} \nabla {\vv{v}} : \nabla {\vv{w}}_n\,,
$$
where, for all $n \in \mathbb N,$ there holds:
$$ \int_{\mathcal F} \nabla {\vv{v}} : \nabla {\vv{w}}_n  =  \int_{\mathcal F}  (\nabla {\vv{v}} - {q} \mathbb{I}_3 ) :  \nabla {\vv{w}}_n
 =-  \langle (\Delta {\vv{v}} - \nabla q ) ,  {\vv{w}}_n \rangle\,.
$$
Again, as $\vv{w}_n \in V_0,$ we apply definition \eqref{eq_defC} of $C[{\vv{v}},q].$ This entails:
$$
\left| \int_{\mathcal F} \nabla {\vv{v}} : \nabla {\vv{w}}_n\right| \leq C[{\vv{v}},q] \, \|{\vv{w}}_n  ; V\|\,,
$$
and, in the limit $n \to \infty:$
$$
\| {\vv{v}} - \vv{u}\, ; V\|^2 \leq C[{\vv{v}},q] \| {\vv{v}} - \vv{u}\, ; V\|\,.
$$
This ends the proof.
\end{proof}

We emphasize that in the statement of this criterion there is no geometrical constant in front of $C[\mathbf v,q]$
in \eqref{eq_criteria2}. This is particularly important as we want to consider the influence of the geometry
on the relevance of the approximation $\vv{v}$. Actually, these geometrical dependencies are hidden in the computation 
of $C[\mathbf v,q].$

\medskip

\section{Preliminary results on the lubrication problem} \label{sec_Gilbarg}
In this section, we consider the two-dimensional divergence problem: 
\begin{eqnarray}
- \dfrac{1}{12} {\rm div} \left[ \gamma^3  \nabla \varpi \right] &=&  f \,, \quad  \text{on $B(\mathbf 0,L)$}\,,\label{eq_papp1bis}\\[4pt]
\varpi({x},{y}) &=&0\,, \quad  \text{on $\partial B(\mathbf 0,L).$} \label{eq_papp2bis}
\end{eqnarray}
We restrict to source terms $f \in C^{\infty}(\overline{B(\mathbf 0,L)})$ which have the special form:
\begin{equation} \label{eq_formf}%
f =  w^* - \dfrac{1}{2} {\rm div} \, ((\gamma_t + \gamma_b) \vv{v}^*) \,, \quad \text{ where $(\vv{v}^*,w^*) \in C^{\infty}(\overline{B(\mathbf 0,L)} ; \mathbb R^2 \times \mathbb R)$}\,.
\end{equation}
The weight $\gamma \in C^{\infty}(B(\mathbf 0,L))$ is computed with respect to $\gamma_t$ and $\gamma_b$: 
$$
\gamma(x,y) = h + \gamma_t(x,y) - \gamma_b(x,y) \,, \quad \forall \, (x,y) \in B(\mathbf 0,L),
$$ 
with a small but positive distance $h.$ The assumptions (A1)--(A3) are in force.
It is standard that, since $\gamma$ is smooth and bounded from below by a strictly positive constant, there exists a unique smooth solution $\varpi$ to \eqref{eq_papp1bis}-\eqref{eq_papp2bis}.  
Our aim is to compute estimates on quantities of the form:
$$
\int_{B(\mathbf 0,L)} \gamma^n |\nabla^{k} \varpi|^2\,, \quad n \in \mathbb N\,,  k \in \mathbb N\,.
$$
with explicit dependencies in the data $f,$ the distance $h,$ $\gamma_t$ and $\gamma_b.$ 
In these estimates will appear constants depending directly on $\gamma,\gamma_t,\gamma_b.$ We state the definition of these constants as a lemma: 
\begin{lemma} \label{def_W}
There exist constants $(C_{cvx},C_{ell}) \in (0,\infty)$ depending on $\gamma_t$ and $\gamma_b$ only and $(C^{reg}_{k})_{k \in \mathbb N} \in (0,\infty)^{\mathbb N}$   
such that there holds:
\begin{eqnarray} 
 \label{eq_Ck}\|\gamma_t \, ; \, C^{k}(\overline{B(\mathbf 0,L)})\|  + \|\gamma_b \, ; \, C^{k}(\overline{B(\mathbf 0,L)})\|\leq C^{reg}_{k}\,, && \forall \, k \in \mathbb N\,,\\
 \label{eq_Ccvx} C_{cvx} \leq   \Delta \gamma(x,y)  \,, && \forall \, (x,y) \in B(\mathbf 0,L)\,, 
\end{eqnarray}
and, for $h< 1$:
\begin{eqnarray}
\label{eq_Cell}  h+  C_{ell} (x^2 + y^2)  \leq \gamma(x,y) \,, && \forall \, (x,y) \in B(\mathbf 0,L)\,.
\end{eqnarray}
\end{lemma}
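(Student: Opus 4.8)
The plan is to verify the three asserted inequalities \eqref{eq_Ck}, \eqref{eq_Ccvx}, \eqref{eq_Cell} one at a time, each being an elementary consequence of the smoothness of $\gamma_t,\gamma_b$ on the closed ball $\overline{B(\mathbf 0,L)}$ together with assumptions (A1)--(A2). For \eqref{eq_Ck}, since $\gamma_t$ and $\gamma_b$ are smooth on the compact set $\overline{B(\mathbf 0,L)}$, each of their $C^k$-norms is finite; I would simply set $C^{reg}_k := \|\gamma_t;C^k(\overline{B(\mathbf 0,L)})\| + \|\gamma_b;C^k(\overline{B(\mathbf 0,L)})\|$, which is a positive real number depending only on $\gamma_t,\gamma_b$, and the inequality holds trivially (with equality).

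For \eqref{eq_Ccvx}, I would observe that $\Delta\gamma = \Delta\gamma_t - \Delta\gamma_b = \mathrm{tr}(\nabla^2\gamma_t - \nabla^2\gamma_b)$ is, by assumption (A2), bounded below: the matrix inequality $\nabla^2\gamma_t(x,y) - \nabla^2\gamma_b(x,y) \geq \tfrac{1}{R_1}\mathbb{I}_3$ (read in the relevant two-dimensional tangential block) forces each eigenvalue of $\nabla^2\gamma_t - \nabla^2\gamma_b$ to be at least $1/R_1$, hence $\Delta\gamma(x,y) \geq 2/R_1 > 0$ on all of $B(\mathbf 0,L)$. Setting $C_{cvx} := 2/R_1$ gives the claim, and this constant depends only on $R_1$, which is in turn determined by $\gamma_t,\gamma_b$ via (A'2) (possibly after shrinking $L$, as already discussed in the text). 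In fact one could equally take $C_{cvx}$ to be the infimum of $\Delta\gamma$ over the compact set $\overline{B(\mathbf 0,L)}$, which is attained and strictly positive.

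For \eqref{eq_Cell}, I would use the convexity estimate quantitatively. Write $\psi(x,y) := \gamma_t(x,y) - \gamma_b(x,y)$, so $\gamma = h + \psi$. By (A1), $\nabla\psi(\mathbf 0) = \mathbf 0$, and $\psi(\mathbf 0) = \gamma_t(\mathbf 0) - \gamma_b(\mathbf 0) = 0$ by the normalization of coordinates (the origin lies on both $\partial\Omega$ and, for $h=0$, on $\partial\mathcal B$). A second-order Taylor expansion with integral remainder along the segment from $\mathbf 0$ to $(x,y)$ gives
\[
\psi(x,y) = \int_0^1 (1-t)\, (x,y)^{\mathsf T} \nabla^2\psi(t x, t y)\, (x,y)\, {\rm d}t \;\geq\; \frac{1}{R_1}\int_0^1 (1-t)\,(x^2+y^2)\,{\rm d}t \;=\; \frac{x^2+y^2}{2R_1},
\]
using the lower bound on $\nabla^2\psi = \nabla^2\gamma_t - \nabla^2\gamma_b$ from (A2). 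Hence $\gamma(x,y) = h + \psi(x,y) \geq h + \frac{1}{2R_1}(x^2+y^2)$ for all $(x,y)\in B(\mathbf 0,L)$ and all $h$ (in particular for $h<1$), so $C_{ell} := \tfrac{1}{2R_1}$ works; this constant again depends only on $\gamma_t,\gamma_b$.

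There is no genuine obstacle here — the lemma is purely a bookkeeping device that packages the consequences of (A1)--(A3) into named constants for use in the later estimates. The only point requiring a little care is the passage from the matrix inequality in (A2) (stated for $3\times 3$ symmetric matrices of the Hessians, which are effectively two-dimensional objects padded appropriately) to the scalar bounds on $\Delta\gamma$ and on the quadratic form $\psi$; one must be consistent about whether $\nabla^2$ denotes the full Hessian in $(x,y)$ or an embedded $3\times 3$ version, but in either reading the eigenvalue bound and the Taylor argument go through verbatim. I would also note explicitly that all constants are independent of $h$, which is the feature the subsequent analysis relies on.
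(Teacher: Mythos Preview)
Your proof is correct and is precisely the argument the paper has in mind: the authors themselves write that ``the proof of this lemma is an obvious consequence of (A1)--(A3) and is left to the reader,'' and your verification of each of \eqref{eq_Ck}, \eqref{eq_Ccvx}, \eqref{eq_Cell} via smoothness on a compact set, the trace of the Hessian inequality in (A2), and a second-order Taylor expansion using (A1)--(A2), is exactly what that sentence is gesturing at. Your remark about the $\mathbb{I}_3$ versus two-dimensional Hessian is well taken but, as you note, immaterial to the conclusion.
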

The proof of this lemma is an obvious consequence of (A1)-(A3) and is left to the reader.
We remark that the constants $C_{cvx}$ and $C_{ell}$ are related to the constant $R_1$ appearing
in assumption (A2). Assumption (A1) together with \eqref{eq_Cell} imply there exists a constant $K$ depending on $C_{ell}$ and $C^{reg}_2$ such that: 
\begin{eqnarray}
\label{eq_CL}  |\nabla \gamma_t(x,y)| + |\nabla \gamma_b(x,y)|  &\leq& K |\gamma(x,y)|^{\frac 12}\,,  \quad \forall \, (x,y) \in B(\mathbf 0,L)\,,\\
\label{eq_boundtbg}
\gamma_t(x,y) + \gamma_b(x,y) &\leq& K \gamma(x,y)\,,  \qquad \forall \, (x,y) \in B(\mathbf 0,L)\,.
\end{eqnarray}

With these conventions, the main results of this section are the following propositions. We first write two weighted first-order estimates on the solution $\varpi$
of  \eqref{eq_papp1bis}-\eqref{eq_papp2bis} depending on the behavior of the data $\vv{v}^*$ and $w^*$ (recall that $f$ is given by \eqref{eq_formf})
in the origin. 

\begin{proposition} \label{prop_favorable}
Assume that the data $\vv{v}^*,w^*$ satisfy:
\begin{equation} \label{ass_favorable}
\vv{v}^*(\mathbf 0) = \mathbf 0\,, \quad w^*(\mathbf 0) = 0, \quad \nabla w^*(\mathbf 0) = \mathbf 0\,.
\end{equation}
Let $n \in [0,\infty).$ There exists constant $K_n$ depending on $C_{cvx},C_{ell},C_2^{reg}$ and $n$ for which:
\begin{equation} \label{eq_estimationfavorable}
\int_{B(\mathbf 0,L)} |\gamma|^{\frac 52+n} |\nabla \varpi|^2  \leq K_n \left\{ \|\vv{v}^* ; H^2({B}(0,L))\|^2 + \|w^*; H^3({B(\mathbf 0,L)})\|^2 \right\}\,,
\end{equation}
\end{proposition}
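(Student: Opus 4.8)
The plan is to test the weak formulation of \eqref{eq_papp1bis}--\eqref{eq_papp2bis} against $\varpi$ multiplied by a suitable power of the weight $\gamma$, and to absorb the cross term produced by this multiplier by means of the convexity bound \eqref{eq_Ccvx}. Since $\gamma \ge h > 0$, the function $\phi := \gamma^{n-\frac12}\varpi$ is an admissible test function vanishing on $\partial B(\mathbf 0,L)$; inserting it and rewriting the resulting cross term via $\tfrac{n-1/2}{12}\int\gamma^{n+\frac32}\varpi\,\nabla\gamma\cdot\nabla\varpi = -\tfrac{n-1/2}{24}\int\mathrm{div}(\gamma^{n+\frac32}\nabla\gamma)\,\varpi^2$ (no boundary contribution, since $\varpi$ vanishes on $\partial B(\mathbf 0,L)$) yields the identity
\[
\frac{1}{12}\int_{B(\mathbf 0,L)}\gamma^{n+\frac52}|\nabla\varpi|^2 \;-\; \frac{n-\frac12}{24}\int_{B(\mathbf 0,L)}\Bigl[(n+\tfrac32)\gamma^{n+\frac12}|\nabla\gamma|^2 + \gamma^{n+\frac32}\Delta\gamma\Bigr]\varpi^2 \;=\; \int_{B(\mathbf 0,L)} f\,\gamma^{n-\frac12}\varpi .
\]
For $n\in[0,\tfrac12)$ the bracket is nonnegative and, since $\Delta\gamma \ge C_{cvx}$, the second term on the left is bounded below by $\tfrac{(\frac12-n)C_{cvx}}{24}\int\gamma^{n+\frac32}\varpi^2$; hence both $\int\gamma^{n+\frac52}|\nabla\varpi|^2$ and $\int\gamma^{n+\frac32}\varpi^2$ are controlled by $\lvert\langle f,\gamma^{n-\frac12}\varpi\rangle\rvert$.

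Next I would estimate $\langle f,\gamma^{n-\frac12}\varpi\rangle$. Writing $f = w^* - \tfrac12\mathrm{div}((\gamma_t+\gamma_b)\vv{v}^*)$ and integrating the divergence by parts (again the boundary term vanishes because $\varpi = 0$ on $\partial B(\mathbf 0,L)$), one is left with integrals of the form $\int(\gamma_t+\gamma_b)\vv{v}^*\cdot\bigl[(n-\tfrac12)\gamma^{n-\frac32}\nabla\gamma\,\varpi + \gamma^{n-\frac12}\nabla\varpi\bigr]$ and $\int w^*\gamma^{n-\frac12}\varpi$. Using $|\gamma_t+\gamma_b| \le K\gamma$ and $|\nabla\gamma| \le K\gamma^{\frac12}$ from \eqref{eq_CL}--\eqref{eq_boundtbg}, each integrand is dominated by one of $\gamma^{n+\frac12}|\vv{v}^*||\nabla\varpi|$, $\gamma^{n}|\vv{v}^*||\varpi|$, $\gamma^{n-\frac12}|w^*||\varpi|$, and Young's inequality splits it into a small multiple of $\int\gamma^{n+\frac52}|\nabla\varpi|^2 + \int\gamma^{n+\frac32}\varpi^2$ (absorbed on the left) plus the weighted data terms $\int\gamma^{n-\frac32}|\vv{v}^*|^2$ and $\int\gamma^{n-\frac52}|w^*|^2$. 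These last two I would bound by $\|\vv{v}^*;H^2\|^2$ and $\|w^*;H^3\|^2$ via a weighted Hardy inequality: by \eqref{eq_Cell} one has $\gamma \ge C_{ell}(x^2+y^2)$, so it is enough to control $\int_{B(\mathbf 0,L)}|(x,y)|^{-a}|\vv{v}^*|^2$ and $\int_{B(\mathbf 0,L)}|(x,y)|^{-b}|w^*|^2$ with $a<4$ and $b<6$; and by assumption \eqref{ass_favorable} together with the two-dimensional Sobolev embeddings $H^2\hookrightarrow C^{0,\alpha}$, $H^3\hookrightarrow C^{1,\alpha}$ on $B(\mathbf 0,L)$ (any $\alpha<1$) one gets $|\vv{v}^*(x,y)| \lesssim |(x,y)|^{\alpha}$ and $|w^*(x,y)| \lesssim |(x,y)|^{1+\alpha}$, which makes these integrals converge with constants independent of $h$. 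This settles \eqref{eq_estimationfavorable} for $n\in[0,\tfrac12)$; in particular $\int\gamma^{\frac32}\varpi^2 \lesssim \|\vv{v}^*;H^2\|^2 + \|w^*;H^3\|^2$, and since $\gamma$ is bounded above this gives $\int\gamma^{\theta}\varpi^2 \lesssim \|\vv{v}^*;H^2\|^2 + \|w^*;H^3\|^2$ for every $\theta\ge\tfrac32$.

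For $n\ge\tfrac12$ the multiplier $\gamma^{n-\frac12}$ no longer yields a favorably signed cross term, but this range is easier. As $\gamma$ is bounded above by a constant $\Gamma$ depending only on the data, $\gamma^{n+\frac52} \le \Gamma^{n-\frac12}\gamma^3$, so
\[
\int_{B(\mathbf 0,L)}\gamma^{n+\frac52}|\nabla\varpi|^2 \;\le\; \Gamma^{n-\frac12}\int_{B(\mathbf 0,L)}\gamma^3|\nabla\varpi|^2 \;=\; 12\,\Gamma^{n-\frac12}\int_{B(\mathbf 0,L)} f\,\varpi ,
\]
using the basic energy identity (the weak formulation tested against $\varpi$ itself, which also shows $\int f\varpi\ge0$). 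The term $\int f\varpi$ is the case $n=\tfrac12$ of the source term treated above, and the same decomposition, Young estimates and weighted Hardy bounds apply: the resulting small multiple of $\int\gamma^3|\nabla\varpi|^2 = 12\int f\varpi$ is absorbed, and the weighted $\varpi^2$ integrals are already under control from the previous paragraph. Combined with the case $n\in[0,\tfrac12)$ this proves \eqref{eq_estimationfavorable} for all $n\ge0$.

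The main obstacle is the last ingredient — bounding $\langle f,\gamma^{n-\frac12}\varpi\rangle$ with constants uniform in $h$. Since $f$ carries a derivative of $\vv{v}^*\in H^2$ there is no pointwise control of $f$, so the divergence must be integrated by parts, and after this the weights $\gamma^{n-\frac32}$ and $\gamma^{n-\frac52}$ are singular at the origin as $h\to0$; the fact that the associated constants do not blow up relies precisely on the non-degeneracy estimate \eqref{eq_Cell}, which turns $\gamma$-weights into $h$-free power weights amenable to classical weighted Hardy inequalities, in combination with the vanishing of $\vv{v}^*$, $w^*$ and $\nabla w^*$ at the origin stipulated in \eqref{ass_favorable}.
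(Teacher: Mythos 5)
Your proof is correct and follows essentially the same route as the paper: the paper's argument is precisely the multiplier $\gamma^{-1/2}\varpi$ (its Lemma \ref{lem_reg2} with $\alpha=-1/2$), with the cross term controlled through the convexity bound \eqref{eq_Ccvx}, the data terms reduced to $\int\gamma^{-3/2}|\vv{v}^*|^2+\int\gamma^{-5/2}|w^*|^2$, and these made $h$-uniformly finite via the Sobolev-embedding vanishing rates from \eqref{ass_favorable} together with \eqref{eq_Cell}. The only (harmless) difference is organizational: the paper disposes of all $n>0$ at once by the upper bound on $\gamma$ after proving $n=0$, whereas you run a separate, slightly redundant energy argument for $n\ge\tfrac12$.
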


\begin{proposition} \label{prop_reg2}
Let $n \in [0,\infty).$ There exists constant $K_n$ depending on $C_{cvx},C_{ell},C_2^{reg}$ and $n$ for which:
\begin{itemize}
{\item if $n=0$ we have:
 \begin{equation} \label{eq_lemest20}
\int_{B(\mathbf 0,L)} \gamma^{3+n} |\nabla \varpi|^2 \leq \dfrac{K_0}{h} \left\{ \|\vv{v}^* ; H^2(B(\mathbf 0,L))\|^2 + \|w^* ; H^2(B(\mathbf 0,L))\|^2 \right\} ,
\end{equation}
\item if $n \in (0,1)$ there holds:
\begin{equation} \label{eq_lemest20bis}
\int_{B(\mathbf 0,L)} \gamma^{3+n} |\nabla \varpi|^2 \leq  K_n\left\{ \dfrac{|w^*(\mathbf 0)|^2}{h^{1-n}} + \|\vv{v}^* ; H^2(B(\mathbf 0,L))\|^2 + \|w^* ; H^2(B(\mathbf 0,L))\|^2 \right\} ,
\end{equation}
\item if $n=1$ we have:
\begin{equation} \label{eq_lemest22}
\int_{B(\mathbf 0,L)} \gamma^{4} |\nabla \varpi|^2 \leq K_1 \left\{ |w^*(\mathbf 0)|^2 |\ln(h)| +  \|\vv{v}^* ; L^2(B(\mathbf 0,L))\|^2 + \|w^* ; H^2(B(\mathbf 0,L))\|^2 \right\} 
\end{equation}
}
\item if $n > 1,$ there holds:
\begin{equation} \label{eq_lemest2}
\int_{B(\mathbf 0,L)} |\gamma|^{3+ n} |\nabla \varpi|^2 \leq K_n \left\{
 \|\vv{v}^* ; L^2(B(\mathbf 0,L))\|^2 + \|w^* ; H^1(B(\mathbf 0,L))\|^2 \right\}
\end{equation}
\end{itemize}
\end{proposition}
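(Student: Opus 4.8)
The strategy is to derive all four estimates from weighted energy identities obtained by testing \eqref{eq_papp1bis} against suitable multiples of $\varpi$. The natural starting point is to multiply \eqref{eq_papp1bis} by $\gamma^{2+n}\varpi$ (or $\gamma^{1+n}\varpi$ after an integration by parts that transfers derivatives off the weight) and integrate over $B(\mathbf 0,L)$. Using \eqref{eq_papp2bis} to kill boundary terms, the left-hand side produces $\tfrac{1}{12}\int \gamma^{3+n}|\nabla\varpi|^2$ plus a cross term of the form $\int \gamma^{2+n}\varpi\,\nabla\gamma\cdot\nabla\varpi$; this cross term is handled by a Cauchy–Schwarz/Young split, absorbing the $|\nabla\varpi|^2$ piece into the main term (possible because $|\nabla\gamma|^2 \lesssim \gamma$ by \eqref{eq_CL} applied to $\gamma=h+\gamma_t-\gamma_b$, so $\gamma^{2+n}|\nabla\gamma|^2 \le K\gamma^{3+n}\cdot\gamma^{-1}\cdot\gamma = \ldots$ — more precisely one pays a factor that must be controlled by the convexity \eqref{eq_Ccvx}) and leaving a remainder $\int \gamma^{1+n}\varpi^2$. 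The right-hand side is $\int f\,\gamma^{2+n}\varpi$, and after writing $f$ in the form \eqref{eq_formf} and integrating by parts the $\div((\gamma_t+\gamma_b)\vv{v}^*)$ term against $\gamma^{2+n}\varpi$, one is left to bound terms like $\int \gamma^{2+n}|w^*||\varpi|$ and $\int (\gamma_t+\gamma_b)|\vv v^*|\,|\nabla(\gamma^{2+n}\varpi)|$, the latter using \eqref{eq_boundtbg} to replace $\gamma_t+\gamma_b$ by $K\gamma$. The outcome is an inequality of the shape $\int \gamma^{3+n}|\nabla\varpi|^2 \le K\big(\int \gamma^{1+n}\varpi^2 + (\text{data})^2\big)$, so everything reduces to controlling the weighted $L^2$-norm $\int\gamma^{1+n}\varpi^2$ of $\varpi$ itself.

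For that, the plan is to combine a weighted Poincaré/Hardy-type inequality with the elliptic lower bound \eqref{eq_Cell}, which gives $\gamma(x,y) \ge h + C_{ell}(x^2+y^2)$, i.e. $\gamma$ behaves like $h + |(x,y)|^2$ near the contact point. A Hardy inequality on $B(\mathbf 0,L)$ relative to this weight trades a power of $\gamma$ for a derivative: roughly $\int \gamma^{m}\varpi^2 \le K\big(\int \gamma^{m+2}|\nabla\varpi|^2 + \text{(lower order)}\big)$ when $m$ is in the admissible range, and the borderline cases $m=1$ (i.e. $n=0$) and nearby produce exactly the $1/h$, $1/h^{1-n}$ and $|\ln h|$ factors: integrating the model weight, $\int_{B(\mathbf 0,L)} (h+C_{ell}\rho^2)^{-1}\rho\,d\rho \sim |\ln h|$ while $\int (h+C_{ell}\rho^2)^{-\beta}\rho\,d\rho \sim h^{1-\beta}$ for $\beta>1$. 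So the $h$-dependence in the four cases is precisely the $h$-dependence of these radial weight integrals, and the $|w^*(\mathbf 0)|$ coefficient arises because the constant/affine part of $f$ near $\mathbf 0$ — which does not decay — is what forces $\varpi$ to be large in the thin gap; subtracting off the favorable part of the data (where Proposition \ref{prop_favorable} already gives the clean estimate \eqref{eq_estimationfavorable} with no $h$-blowup) isolates this contribution. Concretely one writes $f = f_{\mathbf 0} + (f-f_{\mathbf 0})$ where $f_{\mathbf 0}$ encodes $w^*(\mathbf 0)$ (and, for $n>1$, also $\nabla w^*(\mathbf 0)$ and $\vv v^*(\mathbf 0)$ drop out because the associated weight integrals converge), applies Proposition \ref{prop_favorable} to the remainder, and estimates the $f_{\mathbf 0}$-contribution by an explicit sub-solution or by the radial integrals above.

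The main obstacle I expect is the bookkeeping of the borderline weights: making the absorption of the cross term $\int\gamma^{2+n}\varpi\,\nabla\gamma\cdot\nabla\varpi$ rigorous requires that the constant produced by Young's inequality, combined with the Hardy constant, stays below $1/12$, and this is exactly where \eqref{eq_Ccvx} (convexity of $\gamma$) must be used quantitatively rather than qualitatively — the identity $\Delta(\gamma^{m}) = m\gamma^{m-1}\Delta\gamma + m(m-1)\gamma^{m-2}|\nabla\gamma|^2$ lets one play $\Delta\gamma \ge C_{cvx}$ against $|\nabla\gamma|^2$, but getting the admissible range of $n$ (and in particular why $n\ge 0$ everywhere, with the split at $n=0,1$) demands care. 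A secondary technical point is justifying the integrations by parts and the Hardy inequality up to $\partial B(\mathbf 0,L)$, which is harmless since $\varpi$ is smooth and vanishes there, and $\gamma$ is bounded above and below by positive constants away from the contact region; all the subtlety is concentrated near $\mathbf 0$ where \eqref{eq_Cell} is the only tool. Once the weighted $L^2$ bound on $\varpi$ is in hand with the correct $h$-power, the four claimed inequalities follow by matching $n$ to the cases and reading off which data norms ($H^2$ or $H^1$ on $w^*$, $H^2$ or $L^2$ on $\vv v^*$) survive — the drop from $H^2$ to $H^1$/$L^2$ for $n>1$ being exactly the statement that the faster-decaying weight no longer sees the top-order part of the data.
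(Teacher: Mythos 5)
Your skeleton is in fact the paper's: test \eqref{eq_papp1bis} against $\gamma^{n}\varpi$, exploit the convexity of $\gamma$ through the identity for $\Delta(\gamma^{n+3})$, and read off the $h$-dependence from radial model integrals $\int_0^L (h+C_{ell}r^2)^{-\beta}r\,{\rm d}r$. But the two steps you leave open are exactly the ones that carry the proof, and as formulated they do not close. For the cross term $n\int\gamma^{2+n}\varpi\,\nabla\gamma\cdot\nabla\varpi$, your plan (Young plus a weighted Hardy inequality, hoping the combined constant "stays below $1/12$") has no mechanism to produce the required smallness: the Hardy-type constant depends on $C_{cvx},C^{reg}_2$ and is not small. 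The paper never uses Young here; it writes $\gamma^{2+n}\nabla\gamma=\tfrac{1}{n+3}\nabla(\gamma^{n+3})$ and integrates by parts exactly, so the cross term becomes $-\tfrac{n}{2(n+3)}\int\Delta(\gamma^{n+3})|\varpi|^2$, which by \eqref{eq_tech1} (a consequence of \eqref{eq_n+1} and $\varpi=0$ on $\partial B(\mathbf 0,L)$) is at most $\tfrac{2n}{n+2}\int\gamma^{n+3}|\nabla\varpi|^2$ — a factor strictly less than $1$ precisely when $n<2$. This is Lemma \ref{lem_reg2}, and the restriction $n<2$ is then harmless because $\gamma$ is bounded above, so the statement for all $n\ge 0$ reduces to $n\in[0,2)$; your proposal contains neither this exact bookkeeping nor this reduction, which is why your "admissible range of $n$" worry stays unanswered.

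The second gap is where the singular $h$-factors come from. You attribute them in part to borderline Hardy inequalities for $\varpi$; that cannot be right, since any $h$-dependence in the absorption constants would wreck the scheme — those constants are $h$-uniform, and the $1/h$, $h^{n-1}$, $|\ln h|$ factors arise solely from the data term $\int\gamma^{n-2}|w^*|^2$ on the right-hand side of Lemma \ref{lem_reg2}, after peeling off $w^*(\mathbf 0)$ pointwise via $|w^*(x,y)|\le |w^*(\mathbf 0)|+C(x^2+y^2)^{3/8}\|w^*;H^2(B(\mathbf 0,L))\|$ and invoking \eqref{eq_Cell}. Your alternative route — split $f$ into the part generated by $w^*(\mathbf 0)$ plus a remainder and send the remainder through Proposition \ref{prop_favorable} — would not give the stated result either: that proposition costs $\|w^*;H^3\|$ (and requires $\nabla w^*(\mathbf 0)=0$, $\vv{v}^*(\mathbf 0)=\mathbf 0$ as well), whereas the estimates you must prove only allow $\|w^*;H^2\|$, and even $\|w^*;H^1\|$, $\|\vv{v}^*;L^2\|$ for $n>1$. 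The direct estimate of the data integrals, with only the constant $w^*(\mathbf 0)$ extracted, is what yields both the correct powers of $h$ and the claimed norms.
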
 

We emphasize that the constants $K_n$ do not depend on $h.$ In particular, the latter proposition implies that, when $w^*(\mathbf 0)$ vanishes, all quantities:
$$
\int_{B(\mathbf 0,L)} |\gamma|^{3+ n} |\nabla \varpi|^2, \quad n \in \mathbb N,
$$
remain bounded when $h$ goes to $0.$ We complement the study with higher order estimates away and around the singularity point $(x,y) = (0,0)$:
 \begin{proposition} \label{prop_unifreg}
Given a integer $k \geq 0$ and $0 < \varepsilon < L,$
there exists a positive constant $K^{reg}_k$ depending only on $L,\varepsilon,k,C_{ell},C_{cvx},C^{reg}_{k'},$ with $k' = \max (k+1,2),$ for which $\varpi$ satisfies: 
\begin{equation} 
\|\varpi ; H^{k+2}(B(\mathbf 0,L) \setminus B(\mathbf 0,\varepsilon)) \|\\
 \leq K^{reg}_{k}\left[ \|\vv{v}^* ; H^{k+1}(B(\mathbf 0,L))\| + \|w^* ; H^{k'-1}(B(\mathbf 0,L))\|\right]\,.
\end{equation} 
\end{proposition}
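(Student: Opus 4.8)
\textbf{Proof plan for Proposition \ref{prop_unifreg}.}

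The statement is a standard interior/boundary elliptic regularity estimate for the degenerate-coefficient equation \eqref{eq_papp1bis}--\eqref{eq_papp2bis}, with the crucial point that it must be \emph{uniform in $h$} away from the singular point $(0,0)$. The plan is to exploit that on the complement of a small disc the weight $\gamma$ is bounded from below \emph{independently of $h$}: indeed, by \eqref{eq_Cell} we have $\gamma(x,y) \geq C_{ell}(x^2+y^2) \geq C_{ell}\varepsilon^2$ on $B(\mathbf 0,L)\setminus B(\mathbf 0,\varepsilon)$, so the operator $\varpi \mapsto -\tfrac{1}{12}\mathrm{div}(\gamma^3\nabla\varpi)$ is uniformly elliptic on that set with ellipticity constant $\gtrsim (C_{ell}\varepsilon^2)^3$, and its coefficients $\gamma^3$ are bounded in $C^k$ by a constant depending only on $C^{reg}_{k'}$ (since $h<1$). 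So away from the origin this is an honest uniformly-elliptic divergence-form equation and classical Schauder/$H^k$ theory applies with constants that do not see $h$.

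The steps I would carry out are: (1) Fix a cut-off $\chi \in C^\infty_c(B(\mathbf 0,L))$ with $\chi \equiv 1$ on $B(\mathbf 0,L)\setminus B(\mathbf 0,\varepsilon')$ for some $\varepsilon' < \varepsilon$, and note $\chi\varpi$ solves a similar equation on $B(\mathbf 0,L)$ with a modified right-hand side involving $\nabla\chi \cdot \gamma^3 \nabla\varpi$ and lower-order terms — this localizes away from $0$ while keeping the homogeneous Dirichlet condition on $\partial B(\mathbf 0,L)$. (2) Establish the base case $k=0$: multiply by $\varpi$, integrate by parts, use the lower bound $\gamma \geq C_{ell}\varepsilon^2$ together with the Poincaré inequality on $B(\mathbf 0,L)$ and the structure \eqref{eq_formf} of $f$ (integrating the $\mathrm{div}((\gamma_t+\gamma_b)\vv v^*)$ term by parts to trade a derivative, controlled via \eqref{eq_Ck}) to get the $H^1$-bound; then invoke interior-plus-boundary $H^2$ regularity for the uniformly elliptic operator. (3) Bootstrap: assuming the estimate at order $k-1$, differentiate the equation, absorb the commutator terms $[\partial^\beta, \gamma^3]\nabla\varpi$ using \eqref{eq_Ck} and the already-established lower-order bounds, and apply the $H^{m+2}$-from-$H^m$ elliptic estimate once more; the right-hand side's regularity demand $\|w^*;H^{k'-1}\|$ with $k'=\max(k+1,2)$ comes exactly from needing $f \in H^{k}$ in the interior bootstrap (one derivative is eaten by the $\mathrm{div}$ in \eqref{eq_formf}, which is why $w^*$ needs one more derivative than $\vv v^*$, and the $\max$ with $2$ handles the base case). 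Throughout, I would keep track of the constants to confirm they depend only on $L,\varepsilon,k,C_{ell},C_{cvx},C^{reg}_{k'}$ and not on $h$.

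The only genuinely delicate point — and the reason the proposition is stated with the annulus $B(\mathbf 0,L)\setminus B(\mathbf 0,\varepsilon)$ rather than the full disc — is the \emph{uniformity in $h$}: one must be careful that the cut-off commutator term $\nabla\chi\cdot\gamma^3\nabla\varpi$, supported in the transition region $B(\mathbf 0,\varepsilon)\setminus B(\mathbf 0,\varepsilon')$ which still contains points with $\gamma$ comparable to $h$ only if $\varepsilon'$ is taken near $0$ — so one should choose $\varepsilon' = \varepsilon/2$ say, so that the support of $\nabla\chi$ sits at distance $\gtrsim \varepsilon$ from the origin where $\gamma \gtrsim C_{ell}\varepsilon^2$ uniformly; then $\|\gamma^3\nabla\varpi\|$ on that support is controlled by the lower-order (say $H^1$ on the slightly larger annulus) estimate, which must itself have been established uniformly in $h$. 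This forces the induction to be run on a decreasing family of annuli $B(\mathbf 0,L)\setminus B(\mathbf 0,\varepsilon_j)$ with $\varepsilon_j = \varepsilon(1 - j/(2(k+2)))$, the standard nested-cutoff device; this is routine but is where the bulk of the bookkeeping lies. The boundary regularity near $\partial B(\mathbf 0,L)$ is entirely classical since $\gamma$ is smooth and non-degenerate there (uniformly in $h<1$), so it contributes nothing new.
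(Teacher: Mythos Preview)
Your overall strategy --- observe that $\gamma \geq C_{ell}\varepsilon^2$ on the annulus, so the operator is uniformly elliptic there with constants independent of $h$, then apply standard $H^{k+2}$ regularity --- is exactly what the paper does. The difference, and the gap in your plan, is in how you obtain the \emph{seed estimate}: the uniform-in-$h$ bound on $\|\varpi; H^1(B(\mathbf 0,L)\setminus B(\mathbf 0,\varepsilon))\|$ that starts everything.

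Your step (2), ``multiply by $\varpi$, integrate by parts, use the lower bound $\gamma\geq C_{ell}\varepsilon^2$'', does not close. The energy identity gives $\int_{B(\mathbf 0,L)}\gamma^3|\nabla\varpi|^2 = 12\int_{B(\mathbf 0,L)} f\varpi$; the lower bound on $\gamma$ only helps on the annulus, but the right-hand side involves $\varpi$ on the \emph{full} disc, and Poincar\'e only controls $\|\varpi\|_{L^2(B(\mathbf 0,L))}$ by $\|\nabla\varpi\|_{L^2(B(\mathbf 0,L))}$, not by the annular gradient. If instead you localise with $\chi^2\varpi$, the commutator produces $\int\gamma^3|\nabla\chi|^2\varpi^2$, and you are left needing $\|\varpi\|_{L^2(\mathrm{supp}\,\nabla\chi)}$ --- again not yet available. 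Nested cutoffs do not rescue this: each layer demands control of $\varpi$ on a slightly larger set, and the induction never bottoms out because there is no $h$-independent global $L^2$ bound to start from.

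The paper sidesteps this circularity by invoking the weighted estimate already proved in Proposition~\ref{prop_reg2} (case $n=2$): $\int_{B(\mathbf 0,L)}\gamma^5|\nabla\varpi|^2 \leq K[\|\vv v^*;L^2\|^2 + \|w^*;H^1\|^2]$ with $K$ independent of $h$. Since $\gamma^5 \geq (C_{ell}\varepsilon^2)^5$ on the annulus, this immediately gives the uniform $H^1$ bound there (with a Poincar\'e-from-the-outer-boundary for the $L^2$ part). After that the paper simply cites the black-box interior/boundary $H^{k+2}$ theorems of Gilbarg--Trudinger on the uniformly elliptic annular problem, rather than running your hand-rolled bootstrap --- but that part of your plan would also work once the seed is in place. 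The missing ingredient is precisely the weighted multiplier $\gamma^\alpha\varpi$ (Lemma~\ref{lem_reg2}) that makes the full-disc energy estimate uniform in $h$.
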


\begin{proposition} \label{prop_reg}
Let $k \in \{0,\ldots,6\},$  $n \in  (k,\infty)$ and denote $e_k := \max(k,3).$
There exists a constant $K^{sing}_{n,k}$ depending on $k,n,C_{ell},C_{cvx},C^{reg}_{k'},$ with $k'= \max(k+1,2),$ such that:
\begin{multline} 
\int_{B(\mathbf 0,L)} |\gamma|^{3+n} |\nabla^{k+1} \varpi|^2 \leq K^{sing}_{n,k} \Big\{ 
\int_{B(\mathbf 0,L)} |\gamma|^{3+(n-k)} |\nabla \varpi|^2 +  \|\vv{v}^* ; H^{k+1}(B(\mathbf 0,L))\|^2 \\+ \|w^* ; H^{e_k}(B(\mathbf 0,L))\|^2 \Big\}\,. 
\end{multline}
\end{proposition}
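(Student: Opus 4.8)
The plan is to prove Proposition \ref{prop_reg} by a bootstrap (induction on $k$) built on weighted energy estimates for the elliptic operator $\varpi \mapsto -\frac{1}{12}\mathrm{div}(\gamma^3\nabla\varpi)$. The base case $k=0$ is essentially already available: for $n>0$ it is the content of Propositions \ref{prop_favorable} and \ref{prop_reg2} (after replacing $n$ by $3+n$ versus $3+(n-0)$, the statement reduces to bounding $\int|\gamma|^{3+n}|\nabla\varpi|^2$ by itself plus data norms, which is trivial). The real work is the inductive step: assuming the estimate for $k-1$, derive it for $k$. To do this I would differentiate the equation \eqref{eq_papp1bis} $k$ times. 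Writing $\varpi_k$ for a generic $k$-th order derivative $\partial^\beta\varpi$ with $|\beta|=k$, differentiating $\mathrm{div}(\gamma^3\nabla\varpi)=-12f$ produces
\begin{equation*}
\mathrm{div}(\gamma^3\nabla\varpi_k) = -12\,\partial^\beta f - \sum_{0<\mu\leq\beta}\binom{\beta}{\mu}\mathrm{div}\big((\partial^\mu\gamma^3)\nabla\partial^{\beta-\mu}\varpi\big),
\end{equation*}
so $\varpi_k$ solves the same type of equation with a new right-hand side involving lower-order derivatives of $\varpi$ and derivatives of $\gamma^3$, the latter being controlled by $C^{reg}_{k'}$ via Lemma \ref{def_W}, \eqref{eq_CL} and \eqref{eq_boundtbg} (note $\partial^\mu\gamma^3$ is a sum of products $\gamma^{3-j}\partial^{\mu_1}\gamma\cdots\partial^{\mu_j}\gamma$ and the powers of $\gamma$ are arranged so that dividing by $\gamma^3$ is safe, using that $|\nabla\gamma|\lesssim\gamma^{1/2}$).

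Next I would run the weighted energy estimate on this differentiated equation. Multiply by $\gamma^{n-k}\varpi_k$ (or rather by $\gamma^{n-k}$ times a suitable correction so that boundary terms are harmless — since we will ultimately only need this away from $\partial B(\mathbf 0, L)$, a cutoff combined with Proposition \ref{prop_unifreg} near the outer boundary handles that region, and the weight $\gamma$ degenerates only at the interior point $\mathbf 0$ which is in the interior of $B(\mathbf 0,L)$) and integrate by parts. The leading term is $\int\gamma^{3+(n-k)}|\nabla\varpi_k|^2$, which is exactly $\int|\gamma|^{3+n}|\nabla^{k+1}\varpi|^2$ up to summing over multi-indices. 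The commutator term from moving $\gamma^{n-k}$ across the divergence generates $\int\gamma^{2+(n-k)}|\nabla\gamma||\nabla\varpi_k||\varpi_k|$, absorbed by Young's inequality after using $|\nabla\gamma|\lesssim\gamma^{1/2}$ and a weighted Poincaré/Hardy inequality to trade $\int\gamma^{?}|\varpi_k|^2$ against $\int\gamma^{?+2}|\nabla\varpi_k|^2$ at the cost of a small constant plus lower-order data (this is where $n>k$ is used, to keep the exponents in a range where the Hardy-type inequality with weight $\gamma\sim h+|(x,y)|^2$ is valid with $h$-independent constant). The right-hand side contributions pair against $\gamma^{n-k}\varpi_k$: the genuine data part $\partial^\beta f$ contributes $\|w^*;H^{e_k}\|^2 + \|\vv v^*;H^{k+1}\|^2$ after accounting for the $\mathrm{div}((\gamma_t+\gamma_b)\vv v^*)$ structure of $f$ in \eqref{eq_formf} (differentiating $k$ times costs $k+1$ derivatives on $\vv v^*$ and, since $w^*$ enters undifferentiated in $f$ but the estimate for small $k$ needs $H^3$ by the $k=0$ analysis, $e_k=\max(k,3)$); the lower-order-in-$\varpi$ part $\mathrm{div}((\partial^\mu\gamma^3)\nabla\partial^{\beta-\mu}\varpi)$ contributes, after one more integration by parts, terms of the form $\int\gamma^{n-k}|\partial^\mu\gamma^3||\nabla^{\leq k}\varpi||\nabla\varpi_k|$ which are bounded (Young again) by a small multiple of $\int\gamma^{3+(n-k)}|\nabla\varpi_k|^2$ plus $\int\gamma^{3+(n-k)-2}|\nabla^{\leq k}\varpi|^2 \cdot(\text{powers of }\gamma\text{ from }\partial^\mu\gamma^3/\gamma^3)$, and the bookkeeping shows these reduce to $\int|\gamma|^{3+(n-(k-1))}|\nabla^{\leq k}\varpi|^2$, handled by the induction hypothesis with $k-1$ (which has the favourable exponent $n-(k-1)>1$ region eventually, but more simply: iterate down to $k=0$, where the term is $\int|\gamma|^{3+n'}|\nabla\varpi|^2$ with $n'=n-k>0$ and is just carried into the right-hand side as allowed by the statement).

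Assembling, after summing over all multi-indices $\beta$ with $|\beta|=k$ and absorbing the small-constant terms into the left-hand side, one obtains
\begin{equation*}
\int_{B(\mathbf 0,L)}|\gamma|^{3+n}|\nabla^{k+1}\varpi|^2 \leq C\Big\{\int_{B(\mathbf 0,L)}|\gamma|^{3+(n-k)}|\nabla\varpi|^2 + \|\vv v^*;H^{k+1}\|^2 + \|w^*;H^{e_k}\|^2\Big\},
\end{equation*}
which is the claim, with $C$ depending only on $k,n$ and the geometric constants $C_{ell},C_{cvx},C^{reg}_{k'}$ and crucially not on $h$. The main obstacle I anticipate is precisely the $h$-independence: one must be careful that every integration by parts, every Young's inequality, and especially every use of a Hardy/Poincaré-type inequality with the degenerate weight $\gamma(x,y)\geq h + C_{ell}(x^2+y^2)$ (from \eqref{eq_Cell}) produces constants uniform in $h\in(0,1)$. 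This forces the restriction $n>k$ (and the ceiling $k\leq 6$, presumably because beyond that the weight exponents fall outside the range where the relevant weighted embedding/Hardy inequalities hold uniformly, or because only derivatives up to order $7$ of the data are controlled in the intended application). Tracking which power of $\gamma$ accompanies each lower-order term — making sure one never needs to divide by $\gamma$ more times than the available power, given that each derivative of $\gamma^3$ only costs "half a power" of $\gamma$ thanks to \eqref{eq_CL} — is the delicate combinatorial part of the argument, but it is routine once the weight algebra is set up correctly.
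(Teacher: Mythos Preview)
Your overall strategy---induction on $k$, differentiate the equation, multiply by a power of $\gamma$ times the top-order derivative, integrate by parts, and feed the commutator terms back through the induction hypothesis---is exactly the paper's approach. Two points need correction.

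First, the weight bookkeeping is off. If $\varpi_k=\partial^\beta\varpi$ with $|\beta|=k$, then multiplying the differentiated equation by $\gamma^{n-k}\varpi_k$ and integrating by parts produces $\int\gamma^{3+(n-k)}|\nabla\varpi_k|^2$ as leading term, and this is \emph{not} $\int\gamma^{3+n}|\nabla^{k+1}\varpi|^2$: the exponents differ by $k$. The correct multiplier is $\gamma^{n}\varpi_k$ (in the paper's notation, with $m=n-k$, they apply $D^{k+1}$ and multiply by $\gamma^{k+m+1}D^{k+1}\varpi$ to obtain the case $k+1$). With the right multiplier, the commutator coming from $\nabla\gamma^{n}$ produces, after $|\nabla\gamma|\lesssim\gamma^{1/2}$ and Young, a term $\int\gamma^{3+(n-1)}|\nabla^{k}\varpi|^2$, which is precisely the induction hypothesis at level $k-1$ with exponent $n-1>k-1$; no separate weighted Hardy/Poincar\'e inequality is needed here. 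The boundary terms on $\partial B(\mathbf 0,L)$ are handled in the paper not by a cutoff but directly via trace estimates combined with Proposition~\ref{prop_unifreg}.

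Second, your guess about why $k\le 6$ misses the actual mechanism, even though you state the key ingredient yourself. You note that each derivative of $\gamma^3$ costs half a power of $\gamma$; this is exactly the paper's preliminary inequality \eqref{eq_Ki}, namely $|\nabla^{i}\gamma^3|\le K[i]\,\gamma^{3-i/2}$. The point is simply that this bound ceases to give decay once $3-i/2<0$, i.e.\ for $i>6$. Since the commutator $R$ in the differentiated equation involves $\nabla^{i}\gamma^3$ with $i$ up to the order of differentiation, the induction step is confined to that range, which is what forces $k\le 6$. It has nothing to do with weighted embeddings failing or with limits on data regularity.
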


In the remainder of this section, we first give proofs for these propositions.  We then conclude by showing that the first order estimates are optimal in the case
of sphere.

\subsection{First order estimates : proofs of propositions \ref{prop_favorable} and \ref{prop_reg2}}
System  \eqref{eq_papp1bis}-\eqref{eq_papp2bis} is associated with the weak formulation:
\begin{equation}
\dfrac{1}{12}\int_{B(\mathbf 0,L)} \gamma^3 \nabla \varpi \cdot \nabla \varphi = \int_{B(\mathbf 0,L)} f \varphi \,, \quad \forall \, \varphi \in C^{\infty}_c(B(\mathbf 0,L))\,.
\end{equation}
Hence, to construct solutions, one introduces the bilinear form: 
$$
(\varphi,\psi) \mapsto ((\varphi,\psi))_{\gamma} := \dfrac{1}{12}\int_{B(\mathbf 0,L)} \gamma^3 \nabla \varphi \cdot \nabla \psi . 
$$ 
A definition for weak solution to \eqref{eq_papp1bis}-\eqref{eq_papp2bis} then reads:
\begin{definition}
We call weak solution
to \eqref{eq_papp1bis}-\eqref{eq_papp2bis} any $\varpi \in H^1_0(B(\mathbf 0,L))$ such that:
$$
((\varpi, \varphi))_{\gamma} = \int_{B(\mathbf 0,L)} f  \varphi \,, \quad \forall \, \varphi \in H^1_0(B(\mathbf 0,L))\,.
$$
\end{definition}
Thanks to \eqref{eq_Ck}-\eqref{eq_Cell}, $((\cdot,\cdot))_{\gamma}$ defines a coercive continuous bilinear form on $H^1_0(B(\mathbf 0,L))$. Existence and uniqueness of a weak solution to \eqref{eq_papp1bis}-\eqref{eq_papp2bis} yields as a straightforward application of the Stampacchia theorem. Given our regularity assumptions on the distance function $\gamma,$ classical results on divergence problems apply so that this weak solution is smooth on $\overline{B(\mathbf 0,L)}$ and satisfies  \eqref{eq_papp1bis}-\eqref{eq_papp2bis} in a classical sense  (see \cite[Chapter 8]{GilbargTrudingerbook}).   All the estimates in \cite[Chapter 8]{GilbargTrudingerbook} depend {\em a priori} deeply on $h$
so that an alternative approach is required. 

\medskip

In the proofs below, we denote with $K$ a constant which is important to our computations and shall put in brackets its relevant parameters (such as $(C_{cvx},C_{ell}),(C^{reg}_{k})_{k \in \mathbb N})$. These constants may differ from line to line. Most of these constants shall also depend on the parameter $L$ but we include this tacitly. Notations $C$ are used for generic constants that depend only on $L$ or on other parameters that are irrelevant to our computations. Again, they may differ from line to line. Regarding constants $K,$ it might appear that a constant depend on $C^{reg}_k$ and $C^{reg}_j$ for $k\neq j.$ In this case, we have that the constant $K$ depends on the $k$-first derivatives of $\gamma_t$ and $\gamma_b$ and also
on the $j$-first derivatives of $\gamma_t$ and $\gamma_b.$ We shall simplify $K$ then into a function depending only on the $l$-first derivatives of $\gamma_t$ and $\gamma_b$ with $l = \max(k,j),$
{\em i.e.}, on $C^{reg}_{l}.$

\medskip

From now on, we fix data $(\vv{v}^*,w^*)$ and denote the associated (weak) solution $\varpi$.
The first step of the proofs is the following preliminary lemma: 
\begin{lemma} \label{lem_reg2}
Given $\alpha \in [-1,2),$ there exists a constant $K_{\alpha}:= K[C_{cvx},C_2^{reg},C_{ell},\alpha]$  s.t.  $\varpi$ satisfies:
\begin{equation} 
\int_{B(\mathbf 0,L)} |\gamma|^{3+ \alpha} |\nabla \varpi|^2 \leq K_{\alpha}  \int_{B(\mathbf 0,L)} \left[{\gamma^{\alpha-1}}{ |\vv{v}^*|^2} + {\gamma^{\alpha-2}}{|w^*|^2} \right] 
\end{equation}
\end{lemma}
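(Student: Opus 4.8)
The plan is to use the weak formulation with a carefully chosen test function of the form $\varphi = \gamma^{\alpha} \varpi$ (or rather $\varphi = (\gamma^{\alpha}-\gamma_{\partial}^{\alpha})\varpi$ adjusted to lie in $H^1_0$, but since $\gamma$ does not vanish on $\partial B(\mathbf 0,L)$ the function $\gamma^{\alpha}\varpi \in H^1_0(B(\mathbf 0,L))$ already). Plugging this into $((\varpi,\varphi))_{\gamma} = \int f\varphi$ and expanding $\nabla\varphi = \gamma^{\alpha}\nabla\varpi + \alpha\gamma^{\alpha-1}\varpi\nabla\gamma$ produces on the left-hand side the desired coercive term $\frac{1}{12}\int \gamma^{3+\alpha}|\nabla\varpi|^2$ plus a cross term $\frac{\alpha}{12}\int \gamma^{2+\alpha}\varpi\,\nabla\gamma\cdot\nabla\varpi$. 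The first task is to absorb that cross term. Writing $\gamma^{2+\alpha}\varpi\nabla\gamma\cdot\nabla\varpi = \frac{1}{2}\nabla(\varpi^2)\cdot\gamma^{2+\alpha}\nabla\gamma$ and integrating by parts (no boundary contribution since $\varpi \in H^1_0$), this cross term becomes $-\frac{\alpha}{24}\int \varpi^2\,\mathrm{div}(\gamma^{2+\alpha}\nabla\gamma) = -\frac{\alpha}{24}\int \varpi^2\big[(2+\alpha)\gamma^{1+\alpha}|\nabla\gamma|^2 + \gamma^{2+\alpha}\Delta\gamma\big]$. Using \eqref{eq_Ccvx} ($\Delta\gamma \geq C_{cvx} > 0$) and \eqref{eq_CL} ($|\nabla\gamma| \leq K\gamma^{1/2}$, hence $|\nabla\gamma|^2 \leq K^2\gamma$), the bracket is bounded below by a positive multiple of $\gamma^{2+\alpha}$ up to a controllable perturbation; the sign of $\alpha$ decides whether this helps or hurts, but in either case — this is the key point — the condition $\alpha < 2$ is what makes the combination of the coercive term and the reabsorbed cross term remain coercive. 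Concretely one checks that for $\alpha \in [-1,2)$ there is $c = c(C_{cvx},C_{ell},C_2^{reg},\alpha) > 0$ with
$$
\frac{1}{12}\int_{B(\mathbf 0,L)}\gamma^{3+\alpha}|\nabla\varpi|^2 + \frac{\alpha}{12}\int_{B(\mathbf 0,L)}\gamma^{2+\alpha}\varpi\,\nabla\gamma\cdot\nabla\varpi \;\geq\; c\int_{B(\mathbf 0,L)}\gamma^{3+\alpha}|\nabla\varpi|^2 ,
$$
possibly after also invoking a weighted Hardy/Poincaré inequality to control $\int\gamma^{1+\alpha}\varpi^2$ by $\int\gamma^{3+\alpha}|\nabla\varpi|^2$ when $\alpha < 0$ forces the cross term to have the wrong sign.

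Next I would bound the right-hand side $\int f\gamma^{\alpha}\varpi$. Recall $f = w^* - \frac{1}{2}\mathrm{div}((\gamma_t+\gamma_b)\vv{v}^*)$. For the $w^*$ part, estimate $|\int w^*\gamma^{\alpha}\varpi| \leq (\int \gamma^{\alpha-2}|w^*|^2)^{1/2}(\int\gamma^{\alpha+2}\varpi^2)^{1/2}$ and control $\int\gamma^{\alpha+2}\varpi^2$ by $\int\gamma^{\alpha+3}|\nabla\varpi|^2$ via a weighted Poincaré inequality (available since $\varpi$ vanishes on $\partial B(\mathbf 0,L)$ and $\gamma$ is comparable to a positive constant there — the weight only degenerates in the interior near the origin, so the inequality holds with constant depending on $C_{ell}, L$). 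For the divergence part, integrate by parts to move the derivative onto $\gamma^{\alpha}\varpi$: $\int \frac{1}{2}\mathrm{div}((\gamma_t+\gamma_b)\vv{v}^*)\gamma^{\alpha}\varpi = -\frac{1}{2}\int (\gamma_t+\gamma_b)\vv{v}^*\cdot\nabla(\gamma^{\alpha}\varpi)$, which expands to $-\frac{1}{2}\int(\gamma_t+\gamma_b)\vv{v}^*\cdot(\gamma^{\alpha}\nabla\varpi + \alpha\gamma^{\alpha-1}\varpi\nabla\gamma)$. Using \eqref{eq_boundtbg} ($\gamma_t+\gamma_b \leq K\gamma$) and \eqref{eq_CL}, the first piece is $\lesssim (\int\gamma^{\alpha}|\vv{v}^*|^2)^{1/2}(\int\gamma^{\alpha+2}|\nabla\varpi|^2)^{1/2}$ — note the weight $\gamma^{\alpha+2}$ rather than $\gamma^{\alpha+3}$, so here I gain a factor and $\int\gamma^{\alpha}|\vv{v}^*|^2 \leq C\int\gamma^{\alpha-1}|\vv{v}^*|^2$ since $\gamma$ is bounded — wait, that goes the wrong way; instead one writes $\gamma^{\alpha} = \gamma^{\alpha-1}\cdot\gamma$ and keeps the extra $\gamma$ with $|\nabla\varpi|$ to recover $\gamma^{\alpha+3}|\nabla\varpi|^2$ and $\gamma^{\alpha-1}|\vv{v}^*|^2$ after Cauchy–Schwarz. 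The second piece, $\int\gamma^{\alpha-1}|\vv{v}^*||\varpi||\nabla\gamma| \lesssim \int\gamma^{\alpha-1/2}|\vv{v}^*||\varpi|$, is handled similarly by Cauchy–Schwarz and the weighted Poincaré inequality.

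Finally I would combine: the left side gives $c\int\gamma^{3+\alpha}|\nabla\varpi|^2$, every right-side term is bounded by $\varepsilon\int\gamma^{3+\alpha}|\nabla\varpi|^2 + C_{\varepsilon}\int(\gamma^{\alpha-1}|\vv{v}^*|^2 + \gamma^{\alpha-2}|w^*|^2)$, and choosing $\varepsilon$ small enough to absorb into the left yields the claimed estimate with $K_{\alpha}$ depending only on $C_{cvx},C_2^{reg},C_{ell},\alpha$ (and tacitly $L$). The main obstacle, as flagged above, is the reabsorption of the cross term produced by differentiating the weight: controlling its sign and size uniformly in $h$ is exactly where the convexity hypothesis \eqref{eq_Ccvx} and the gradient bound \eqref{eq_CL} enter, and where the restriction $\alpha < 2$ is essential — for $\alpha \geq 2$ the coercivity is lost. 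A secondary technical point is making the weighted Poincaré inequality quantitative with an $h$-independent constant, which follows because the weight $\gamma = h + \gamma_t - \gamma_b \geq C_{ell}(x^2+y^2)$ degenerates only like the distance-squared to the origin, a borderline-but-admissible weight in two dimensions.
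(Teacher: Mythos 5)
Your overall strategy (test the weak formulation with $\gamma^{\alpha}\varpi$, integrate the cross term by parts, use the convexity of $\gamma$ and $|\nabla\gamma|^2\lesssim\gamma$, weighted Cauchy--Schwarz on the right-hand side, then absorb) is the same as the paper's, but the crucial absorption step has a genuine gap. First, a sign slip: after integrating by parts the cross term becomes $-\tfrac{\alpha}{24}\int\varpi^2\,\mathrm{div}(\gamma^{\alpha+2}\nabla\gamma)$ with $\mathrm{div}(\gamma^{\alpha+2}\nabla\gamma)=\tfrac{1}{\alpha+3}\Delta[\gamma^{\alpha+3}]\ge 0$ by (A2), so it \emph{helps} when $\alpha<0$ and \emph{hurts} when $\alpha\in(0,2)$ --- the opposite of what you wrote. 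Second, and more seriously, in the bad range $\alpha\in(0,2)$ your plan is to bound the offending term by $C\int\gamma^{\alpha+2}\varpi^2$ (using $|\nabla\gamma|^2\le K\gamma$ and an upper bound on $\Delta\gamma$) and then by $C\,C_P\int\gamma^{\alpha+3}|\nabla\varpi|^2$ via a generic weighted Poincar\'e/Hardy inequality. But then coercivity requires $\tfrac{\alpha}{24}\,C\,C_P<\tfrac1{12}$, and the constants $K$, $C$, $C_P$ coming from \eqref{eq_CL} and a generic Hardy inequality are of order one, not small; nothing forces this product below the threshold, so the argument as sketched only works for $\alpha$ close to $0$ and cannot deliver the full range $[-1,2)$, which is exactly what Proposition \ref{prop_reg2} needs ($\alpha=n\in(0,2)$, $\alpha=1$). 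The asserted threshold $\alpha<2$ does not follow from the ingredients you invoke.

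What the paper does differently at this point is to prove the \emph{sharp} self-improving inequality \eqref{eq_tech1}: integrating $\int\Delta[\gamma^{\alpha+3}]\varpi^2$ by parts and re-absorbing the resulting $\gamma^{\alpha+1}|\nabla\gamma|^2\varpi^2$ term not by $K\gamma^{\alpha+2}\varpi^2$ but by $\tfrac{1}{(\alpha+3)(\alpha+2)}\Delta[\gamma^{\alpha+3}]\varpi^2$ (i.e.\ by the quantity being estimated, using the exact identity $\Delta[\gamma^{\alpha+3}]=(\alpha+3)\Delta\gamma\,\gamma^{\alpha+2}+(\alpha+3)(\alpha+2)|\nabla\gamma|^2\gamma^{\alpha+1}$), which yields $\int\Delta[\gamma^{\alpha+3}]\varpi^2\le\tfrac{4(\alpha+3)}{\alpha+2}\int\gamma^{\alpha+3}|\nabla\varpi|^2$ with a constant independent of $h$ and of the size of $|\nabla\gamma|$. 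Combined with the prefactor $\tfrac{\alpha}{2(\alpha+3)}$ this gives the factor $1-\tfrac{2\alpha}{\alpha+2}>0$ precisely for $\alpha<2$, and the same inequality (together with $\gamma^{\alpha+2}\le\tfrac{1}{2C_{cvx}}\Delta[\gamma^{\alpha+3}]$) replaces your generic weighted Poincar\'e inequality on the right-hand side, where only an $\varepsilon$-absorption is needed. Your separate claim that a uniform-in-$h$ weighted Hardy inequality holds for $\gamma\simeq h+r^2$ is true but under-justified as stated; the convexity route gives it for free with constants depending only on $C_{cvx},C_{ell},C_2^{reg}$. To repair your proof, replace the crude bound on the cross term by this exact $\Delta[\gamma^{\alpha+3}]$ computation; the rest of your right-hand-side treatment then goes through as in the paper.
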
 
\begin{proof}
Given $\alpha \geq -1,$ as $\gamma$ is a positive function, explicit computations yield that:
$$
\Delta [ \gamma^{\alpha+3}] = (\alpha+3)[ \Delta \gamma] \gamma^{\alpha+2} + (\alpha+3)(\alpha+2)|\nabla \gamma|^{2} \gamma^{\alpha+1}\,.
$$
Introducing the bound \eqref{eq_Ccvx} on $\Delta \gamma$, we deduce that:
\begin{eqnarray}
|\nabla \gamma|^{2} \gamma^{\alpha+1}& \leq& \dfrac{1}{(\alpha+3)(\alpha+2)} \Delta [\gamma^{\alpha+3}]\,, \label{eq_n+1}\\
\gamma^{\alpha+2} & \leq &  \dfrac{1}{{ 2} C_{cvx}} \Delta [\gamma^{\alpha+3}]\,. \label{eq_n+2}
\end{eqnarray}
Now, for any $\varphi \in  C^{\infty}(\overline{B(\mathbf 0,L)})$, 
integrating by parts and applying H\"older inequality yields:
\begin{eqnarray}\notag
0 \leq  \int_{B(\mathbf 0,L)} \Delta [ \gamma^{\alpha+3}]  |\varphi|^2  &=&  -2(\alpha+3) \int_{B(\mathbf 0,L)} \gamma^{\alpha+2} \nabla \gamma \cdot \varphi \nabla \varphi + \int_{\partial B(\mathbf 0,L)} |\varphi|^2  \partial_{\nu} \gamma^{\alpha+3} {\rm d}\sigma \\ \notag
&\leq& (\alpha+3) \left[ \dfrac{\alpha+2}{2} \int_{B(\mathbf 0,L)} \gamma^{\alpha+1} |\nabla \gamma|^2 \varphi^2 + \dfrac{2}{\alpha+2} \int_{B(\mathbf 0,L)} \gamma^{\alpha+3} |\nabla \varphi|^2 \right]\\
&& + K[C^{reg}_{1},\alpha] \int_{\partial B(\mathbf 0,L)}|\varphi|^2{\rm d}\sigma\,. \notag
\end{eqnarray}
Applying then \eqref{eq_n+1} to bound the first term in the right-hand side of this last inequality,
we obtain:
\begin{multline} \label{eq_tech1}
\left| \int_{B(\mathbf 0,L)} \Delta [ \gamma^{\alpha+3}]  |\varphi|^2 \right|  \leq \dfrac{4(\alpha +3)}{\alpha+2} \int_{B(\mathbf 0,L)} \gamma^{\alpha+3} |\nabla \varphi|^2 +  K[C^{reg}_{1},\alpha] \int_{\partial B(\mathbf 0,L)}|\varphi|^2 {\rm d}\sigma.
\end{multline}

\bigskip
 
We multiply now \eqref{eq_papp1bis} with $\gamma^{\alpha}\varpi.$ This yields:
\begin{multline}
\int_{B(\mathbf 0,L)} \gamma^{\alpha+3} |\nabla \varpi|^2 - \dfrac{\alpha}{2(\alpha+3)} \int_{B(\mathbf 0,L)}  \Delta [ \gamma^{\alpha+3}] |\varpi|^2\\
 = 12 \int_{B(\mathbf 0,L)}  \left(  w^* - \dfrac 12 {\rm div} [(\gamma_t+\gamma_b) \vv{v}^*]  \right) \gamma^{\alpha} \varpi  \,. 
 \label{eq_debase}
\end{multline}
We compute separately the {\em RHS} and the {\em LHS} of this identity. 
If $\alpha < 0$ the last term on the RHS is positive. If $\alpha \in [0,2],$
we remark that $\varpi$ vanishes on $\partial B(\mathbf 0,L)$ and apply \eqref{eq_tech1} to $\varpi.$
This entails:
\begin{equation} \label{eq_LHS}
LHS \geq \left( 1 - \dfrac{2\max(\alpha,0)}{(\alpha+2)}\right) \int_{B(\mathbf 0,L)} \gamma^{\alpha+3} |\nabla \varpi|^2.
\end{equation}
We note here that, since $\alpha < 2$ the factor appearing in the right-hand side of this identity is positive. The fact that this factor changes sign when $\alpha$ crosses $2$ makes this proof irrelevant for $\alpha \geq 2.$  

\medskip

Concerning the {\em RHS}, we have first, applying  \eqref{eq_n+2} and \eqref{eq_tech1} (with $\varphi = \varpi$ and noting that $4(\alpha +3)/(\alpha+2) \leq 8$), that, for arbitrary $\varepsilon >0,$ there holds:
$$
\left| 12 \int_{B(\mathbf 0,L)} w^* \gamma^\alpha \varpi \right| \leq  \dfrac{K[C_{cvx}]}{\varepsilon} \int_{B(\mathbf 0,L)} |w^*|^2 \gamma^{\alpha-2}
+\dfrac{\varepsilon}{8} \int_{B(\mathbf 0,L)}\gamma^{\alpha+3} |\nabla \varpi|^2  \,. 
 $$
 Similarly, by applying \eqref{eq_n+1}, \eqref{eq_n+2}  and \eqref{eq_tech1} (again with $\varphi = \varpi$) and \eqref{eq_boundtbg},
 we obtain that, for $\alpha \in [-1,2)$ and arbitrary $\varepsilon >0,$ there holds:
\begin{multline*}
\left| \int_{B(\mathbf 0,L)} {\rm div}\,( \gamma_t + \gamma_b) \vv{v}^*  \gamma^{\alpha} \varpi \right|
\leq  
  \left|\alpha \int_{B(\mathbf 0,L)}    (\gamma_t+ \gamma_b) \vv{v}^* \cdot \nabla \gamma \,  \gamma^{\alpha -1}  \varpi \right|
+
\left| \int_{B(\mathbf 0,L)}     (\gamma_t+ \gamma_b) \vv{v}^*  \cdot \nabla \varpi \gamma^{\alpha} \right|\, \\
\begin{array}{rcl}
& \leq & \displaystyle
\dfrac{K[C_{ell},C_{2}^{reg}]}{\varepsilon}\int_{B(\mathbf 0,L)}  |\vv{v}^*|^2 \gamma^{\alpha-1} 
+ 
{\varepsilon}\int_{B(\mathbf 0,L)} \gamma^{\alpha+1} |\nabla \gamma|^2 |\varpi|^2
+
{\varepsilon}\int_{B(\mathbf 0,L)} \gamma^{\alpha+3}  |\nabla \varpi|^2\\[12pt]
& \leq &\displaystyle
\dfrac{{K[C_{ell},C_{2}^{reg}]}}{\varepsilon} \int_{B(\mathbf 0,L)}  |\vv{v}^*|^2 \gamma^{\alpha-1} 
+ 
{\varepsilon}\int_{B(\mathbf 0,L)} \Delta[\gamma^{\alpha+3}] |\varpi|^2
+ 
{\varepsilon}\int_{B(\mathbf 0,L)} \gamma^{\alpha+3}  |\nabla \varpi|^2\\[12pt]
& \leq &\displaystyle
\dfrac{{K[C_{ell},C_{2}^{reg}]}}{\varepsilon} \int_{B(\mathbf 0,L)}  |\vv{v}^*|^2 \gamma^{\alpha-1} 
+ 
\dfrac{\varepsilon}{4}\int_{B(\mathbf 0,L)} \gamma^{\alpha+3}  |\nabla \varpi|^2\,.\\
\end{array}
\end{multline*}
This yields finally that we have, for arbitrary $\varepsilon >0$:
\begin{multline} \label{eq_RHS}
RHS \leq \dfrac{K[C_{ell},C_{2}^{reg},C_{cvx}]}{\varepsilon}\left( \int_{B(\mathbf 0,L)}  |\vv{v}^*|^2 \gamma^{\alpha-1} 
+  \int_{B(\mathbf 0,L)} |w^*|^2 \gamma^{\alpha-2} \right)\\
+ \dfrac{\varepsilon}{4}\int_{B(\mathbf 0,L)} \gamma^{\alpha+3}  |\nabla \varpi|^2\,.
\end{multline}
We replace finally the right-hand side and left-hand side of \eqref{eq_debase} with \eqref{eq_RHS} and \eqref{eq_LHS} and obtain the expected result by choosing $\varepsilon$ sufficiently small depending on $\alpha$.
\end{proof}

We are now in position to prove {Proposition \ref{prop_favorable}} and {Proposition \ref{prop_reg2}}.

\begin{proof}[Proof of Proposition \ref{prop_favorable}]
Because $\gamma$ is bounded by $C^{reg}_0$ on $\overline{B(\mathbf 0,L)},$ it is sufficient to prove \eqref{eq_estimationfavorable} in the case $n=0.$ 
Under the further assumption \eqref{ass_favorable}, there holds (from the two-dimensional embedding $H^{m+2}(B(\mathbf 0,L)) \subset C^{m,3/4}(B(\mathbf 0,L))$)
for arbitrary $m \in \mathbb N \cup \{0\})$
\begin{eqnarray}
|\vv{v}^*(x,y)| &\leq& C (|x|^2 + |y|^2)^{\frac 38} \|\vv{v}^* ; H^2({B(\mathbf 0,L)})\| \,, \label{eq_fav1} \\
 |w^*(x,y)| &\leq&  C (|x|^2 + |y|^2)^{\frac 78} \|w^* ; H^3({B(\mathbf 0,L)})\| \,, \label{eq_fav2}
\end{eqnarray}
for $(x,y) \in B(\mathbf 0,L).$

\medskip

Applying then {Lemma \ref{lem_reg2}} to $\varpi$ with $\alpha=-1/2,$ we obtain:
\begin{eqnarray*}
\int_{B(\mathbf 0,L)} \gamma^{\frac 52} |\nabla \varpi|^2 &\leq&  K[C_{ell},C_{cvx},C^{reg}_2] \left( 
\int_{B(\mathbf 0,L)} \dfrac{|\vv{v}^*|^2}{\gamma^{\frac 32}} + \dfrac{|w^*|^2}{\gamma^{\frac 52}} \right) 
\end{eqnarray*}
Here we call \eqref{eq_fav1}-\eqref{eq_fav2} and \eqref{eq_Cell} to obtain:
\begin{eqnarray*}
\int_{B(\mathbf 0,L)} \dfrac{|\vv{v}^*|^2}{\gamma^{\frac 32}} + \dfrac{|w^*|^2}{\gamma^{\frac 52}}  & \leq &
 C \int_{0}^{L} \left\{ \dfrac{r^{\frac 32}\|\vv{v}^* ; H^2({B(\mathbf 0,L)})\|^2}{{C_{ell}^{3/2}}r^3} + \dfrac{r^{\frac 72}\|w^* ; H^3({B(\mathbf 0,L)})\|^2}{{C_{ell}^{5/2}} r^5} \right\}{r{\rm d}r} \\
& \leq &  K[C_{ell}] \left\{\|\vv{v}^* ; H^2({B(\mathbf 0,L)})\|^2 + \|w^* ; H^3({B(\mathbf 0,L)})\|^2 \right\}.
\end{eqnarray*}
This ends the proof.
\end{proof}

\begin{proof}[Proof of Proposition \ref{prop_reg2}] 
Because $\gamma$ is bounded by $C^{reg}_0$ on $\overline{B(\mathbf 0,L)},$ it is sufficient to prove \eqref{eq_lemest2} and \eqref{eq_lemest22} in the cases $n\in [0,1),$ $n=1$ and  $n \in (1,2).$ 

\medskip

\paragraph{Case $n \in (1,2)$} Applying Lemma \ref{lem_reg2} with $\alpha=n $, we have, with a constant $K_n$ depending on $C_{cvx},C_{ell},C_{2}^{reg}$ and $n$:
\begin{eqnarray*}
\int_{B(\mathbf 0,L)} |\gamma|^{3+n} |\nabla \varpi|^2 
&\leq & K_{n} \left[ \int_{B(\mathbf 0,L)}  \gamma^{n-1}|\vv{v}^*|^2 + \gamma^{n-2} |w^*|^2\right] \\
\end{eqnarray*}
Here, we note that $n>1$ so that $2-n < 1.$ Hence, we might fix $p_n \in (1,\infty)$ such that 
$p_n(2-n) < 1.$ Introducing $q_n$ the conjugate exponent, we get:
 $$
 \int_{B(\mathbf 0,L)}  \gamma^{n-2} |w^*|^2 \leq \left( \int_{B(\mathbf 0,L)} \dfrac{1}{\gamma^{(2-n)p_n}}\right)^{\frac 1{p_n}} \|w^* ; L^{2q_n}(B(\mathbf 0,L))\|^2\,.
 $$
In the right-hand side of this last line, we note that standard 2D-Sobolev imbeddings yield 
$$
\|w^* ; L^{2q_n}(B(\mathbf 0,L))\|^2 \leq C_n \|w^* ; H^1(B(\mathbf 0,L))\|^2
$$ 
and that, from \eqref{eq_Cell}:
$$
\int_{B(\mathbf 0,L)} \dfrac{1}{\gamma^{(2-n)p_n}} \leq K[C_{ell},n] \int_{0}^{L} \dfrac{r{\rm d}r}{ r^{2(2-n)p_n}} \leq K[C_{ell},L,n],
$$
as $2(2-n)p_n < 2.$ This ends up the proof in the last case.

\medskip

\paragraph{Case $n=1$} Applying the embedding $H^2(B(\mathbf 0,L)) \subset C^{0,3/4}(B(\mathbf 0,L))$, we have:
$$
|w^*(x,y)| \leq |w^*(\mathbf 0)| +  C (x^2+ y^2)^{\frac 38} \|w^* ; H^2(B(\mathbf 0,L))\|\,, \quad \forall \, (x,y)  \in B(\mathbf 0,L).
$$ 
The remainder of the computation follows the line of the previous case. Applying Lemma \ref{lem_reg2} with $\alpha =1,$
we have, with a constant $K$ depending on the same quantities:
\begin{multline*}
\int_{B(\mathbf 0,L)} |\gamma|^{4} |\nabla \varpi|^2
\leq  K_1 \left[ \int_{B(\mathbf 0,L)}  |\vv{v}^*|^2 + \dfrac {|w^*|^2}\gamma \right] \\
\begin{array}{rcl}
&\leq & \displaystyle K \Bigg[ \|\vv{v}^*; L^2(B(\mathbf 0,L))\|^2 + \int_{B(\mathbf 0,L)} \dfrac{|w^*(\mathbf 0)|^2}{\gamma} + 
\int_{B(\mathbf 0,L)} \dfrac{r^{\frac 32}}{\gamma} \|w^* ; H^2(B(\mathbf 0,L))\|^2 \Bigg]\,, \\[12pt]
& \leq &\displaystyle K  \Bigg[ \|\vv{v}^*; L^2(B(\mathbf 0,L))\|^2 + \int_{0}^L \dfrac{|w^*(\mathbf 0)|^2 r{\rm d}r}{h+ C_{ell} r^2} +  \|w^* ; H^2(B(\mathbf 0,L))\|^2 \Bigg]\,,\\[12pt]
& \leq &\displaystyle K  \Big[ \|\vv{v}^*; L^2(B(\mathbf 0,L))\|^2 + |w^*(\mathbf 0)|^2  |\ln(h)| +  \|w^* ; H^2(B(\mathbf 0,L))\|^2 \Big]\,.
\end{array}
\end{multline*}
{\paragraph{Case $n\in (0,1)$} Again, we apply Lemma \ref{lem_reg2} with $\alpha =n,$
and apply the embedding $H^2(B(\mathbf 0,L)) \subset C^{0,1-n/2}(B(\mathbf 0,L)).$
This yields with similar computations as previously:
\begin{multline*}
\int_{B(\mathbf 0,L)} |\gamma|^{3+n} |\nabla \varpi|^2
\leq  K_n \left[ \int_{B(\mathbf 0,L)} \dfrac{|\vv{v}^*|^2}{\gamma^{1-n}} + \dfrac {|w^*|^2}{\gamma^{2-n}} \right] \\
\begin{array}{rcl}
&\leq & \displaystyle K[C_0^{reg},C_{ell},n] \Bigg[ \int_0^L \dfrac{\|\vv{v}^*; H^2(B(\mathbf 0,L))\|^2 r{\rm d}r}{\gamma^{1-n}}+ \int_0^L\dfrac{|w^*(0)|^2r + r^{3-n}\|w^* ; H^2(B(\mathbf 0,L))\|^2}{\gamma^{2-n}}   {\rm d}r \Bigg] \,, \\[12pt]
& \leq &\displaystyle K[C_0^{reg},C_{ell},n]  \Bigg[ \|\vv{v}^*; H^2(B(\mathbf 0,L))\|^2 +  \dfrac{|w^*(0)|^2}{h^{1-n}}  +   \|w^* ; H^2(B(\mathbf 0,L))\|^2 \Bigg] \,.
\end{array}
\end{multline*}
 \paragraph{Case $n=0$} We conclude with similar arguments, applying  \ref{lem_reg2} with $\alpha =0,$
and the embedding $H^2(B(\mathbf 0,L)) \subset C^{0}(B(\mathbf 0,L)).$
}
This ends the proof.
\end{proof}

\subsection{Higher order estimates: proofs of propositions \ref{prop_unifreg} and \ref{prop_reg}}
We compute now estimates involving $\nabla^k \varpi$ with $k \geq 1$.
We start with estimates away from the origin.
{ 
\begin{proof}[Proof of Proposition \ref{prop_unifreg}] 
This proposition follows from the classical regularity theory for elliptic problems, as developed in \cite{GilbargTrudingerbook}. Indeed, fix {$k \geq 0$} and $0 < \varepsilon < L < \infty.$ 
Applying {Proposition \ref{prop_reg2}} with $n=2,$ and combining with \eqref{eq_Cell}, we obtain for a constant $K$ wich depends on $C_{cvx},C_{ell},C_{2}^{reg}$ and $L$
that
\begin{eqnarray} \notag
[C_{ell}\,\varepsilon^{2}]^{5} \int_{B(\mathbf 0,L) \setminus B(0,\varepsilon)} |\nabla \varpi|^2 
& \leq &\int_{B(\mathbf 0,L)} |\gamma|^{5} |\nabla \varpi|^2  \\ 
&\leq & K \left[\|\vv{v}^*; L^2(B(\mathbf 0,L))\|^2 +\|w^* ; H^1(B(\mathbf 0,L))\|^2 \right]. 
\end{eqnarray} 
{As $\varpi = 0$ on $\partial B(\mathbf 0,L),$ we deduce from this inequality (applying the variant of Poincar\'e inequality given in \cite[Exercicse II.5.13]{Galdibooknew}) that there exists a constant $K := K[\varepsilon,C_{cvx},C_{ell},C_{2}^{reg},L]$ so that:
\begin{equation}\label{eq_elliptic0}
\|\varpi ; H^1(B(\mathbf 0,L)\setminus B(\mathbf 0,\varepsilon))\| \leq K \left[\|\vv{v}^*; L^2(B(\mathbf 0,L))\|^2 +\|w^* ; H^1(B(\mathbf 0,L))\|^2 \right].
\end{equation}
}
Then, we note that the operator $L := {\rm div} [\gamma^3 \nabla ] $ is uniformly elliptic on $B(\mathbf 0,L) \setminus B(0,\varepsilon)$ with ellipticity constant $\lambda_{\varepsilon} := \inf_{B(\mathbf 0,L) \setminus B(0,\varepsilon)} \gamma^3 \geq (C_{ell} \varepsilon^2)^3$ depending only on $C_{ell}$ and $\varepsilon.$ Finally, {applying \cite[Theorem 8.8 p.183 with Theorem 8.12 p.186]{GilbargTrudingerbook} if $k=0$  or \cite[Theorem 8.10 p.186 with Theorem 8.13, p.187]{GilbargTrudingerbook} if $k \geq1$} implies that there exists a constant $\tilde{K}^{reg}_k$ depending on $\varepsilon,C_{ell}$ and $C_{k+1}^{reg}$ so that: 
\begin{multline*}
\|\varpi ; H^{k+2}(B(\mathbf 0,L)\setminus B(\mathbf 0,\varepsilon))\| \\
\begin{array}{l}
  \leq \tilde{K}^{reg}_k
\left[ \|\varpi ; H^{1}(B(\mathbf 0,L)\setminus B(\mathbf 0,\varepsilon))\|  + \|f ; H^k(B(\mathbf 0,L)\setminus B(\mathbf 0,\varepsilon))\| \right]\,, \\[6pt]
\leq {K}^{reg}_k \left[ \|\vv{v}^*; H^{k+1}(B(\mathbf 0,L))\|^2 + \|w^* ; H^{k'-1}(B(\mathbf 0,L))\|^2 \right]\,,
 \end{array}
 \end{multline*}
 where we applied \eqref{eq_elliptic0} to reach the last line.
\end{proof}
}
 This proposition is a tool for computing the traces of derivatives of $\varpi$
on $\partial B(\mathbf 0,L).$ With this consideration, we obtain Proposition \ref{prop_reg}.

\begin{proof}[Proof of Proposition \ref{prop_reg}] 
To prepare the proof, we mention that the chain rule together with \eqref{eq_CL} yields that for any integer
$i \in \{0,\ldots,6\}$ there exists a constant $K[i]$ depending on $C_{i}^{reg},C_{ell}$ and $C^{reg}_2,$ for which: 
\begin{equation} \label{eq_Ki}
|\nabla^{i} \gamma^3(x,y)| \leq K[i] |\gamma(x,y)|^{3 - \frac i2}\,, \quad \forall \, (x,y) \in B(\mathbf 0,L)\,.
\end{equation}
This inequality no longer holds when $k >6.$ Again, this is the reason for our lemma to hold only for values of the parameter $k$ below $6.$

\medskip

We prove now by induction on $k$ for fixed $m \in (0,\infty)$ that there holds:\\[4pt]
"
there exists a constant $K^{sing}_{m,k}$ depending on $k,m,C_{ell},C_{cvx},C^{reg}_{k'},$ with $k'= \max(k+1,2),$ such that, denoting by $e_k = \max(k,3)$:
\begin{multline} \label{eq_est22bis}
\int_{B(\mathbf 0,L)} |\gamma|^{3+k+m} |\nabla^{k+1} \varpi|^2 \leq K^{sing}_{m,k} \Big\{ 
\int_{B(\mathbf 0,L)} |\gamma|^{3+m} |\nabla \varpi|^2 +  \|\vv{v}^* ; H^{k+1}(B(\mathbf 0,L))\|^2 \\+ \|w^* ; H^{e_k}(B(\mathbf 0,L))\|^2 \Big\}\,. "
\end{multline}

The case $k=0$ is obvious.
To prove the induction argument, we fix $k \in \{0,\ldots,5\}$ and introduce $D^{k+1}$ a (homogeneous) differential operator of order $k+1$ having constant coefficients. Applying $D^{k+1}$ to \eqref{eq_papp1bis} yields:
\begin{equation} \label{eq_derlap}
- \dfrac{1}{12} {\rm div} [\gamma^{3} \nabla D^{k+1} \varpi ] = {\rm div} \left[  R^{k+1} - D^{k+1} \dfrac{1}{2}(\gamma_t + \gamma_b) \vv{v}^*  \right]+ D^{k+1} w^*\,. 
\end{equation}
where:
$$
R^{k+1} :=  \dfrac {1}{12}\left( D^{k+1} [ \gamma^{3} \nabla \varpi ] - \gamma^3 \nabla D^{k+1} \varpi \right).
$$
We multiply \eqref{eq_derlap} with $\gamma^{k+m+1}D^{k+1}\varpi$.
 After integration by parts, this yields:
\begin{equation} \label{eq_induction}
\dfrac{1}{12}\int_{B(\mathbf 0,L)} \gamma^{3+m+(k+1)} |\nabla D^{k+1} \varpi|^2 = \dfrac{RHS_1}{12} + RHS_2,   
\end{equation}
where:
\begin{eqnarray*}
RHS_1 &=& \int_{\partial B(\mathbf 0,L)} \gamma^{4+m+k}\partial_{\nu} D^{k+1} \varpi D^{k+1} \varpi  - \int_{B(\mathbf 0,L)}
(k+m+ 1) \nabla \gamma \gamma^{3+m+k} \nabla D^{k+1} \varpi D^{k+1} \varpi \, \\
RHS_2 &=& \int_{B(\mathbf 0,L)} (D^{k+1} w^* - {\rm div} D^{k+1} \dfrac{(\gamma_t+\gamma_b) \vv{v}^*}{2} ) \gamma^{k+m+1} D^{k+1} \varpi \\ 
&&\quad  - \int_{B(\mathbf 0,L)} R^{k+1} \cdot (\gamma^{k+m+1} \nabla D^{k+1} \varpi + (k+m+1) \nabla \gamma \gamma^{k+m} D^{k+1} \varpi)\,.\\
  && \quad +
\int_{\partial B(\mathbf 0,L)} R^{k+1} \cdot \nu \gamma^{k+m+1} D^{k+1} \varpi  
\end{eqnarray*}
We bound the first term in $RHS_1$  by applying trace theorems and {Proposition \ref{prop_unifreg}}: there exists a constant $C[D^{k+1}]$ (depending on $D^{k+1}$ and also on $L$) for which
\begin{multline*}
\int_{\partial B(\mathbf 0,L)}\left[ |D^{k+1}\varpi|^2 + |\partial_{\nu} D^{k+1} \varpi|^2\right]{\rm d}\sigma\\
\begin{array}{rcl}
&\leq& C[D^{k+1}] \|\nabla^{k+1}\varpi ; H^{1}(\partial (B(\mathbf 0,L)/B(\mathbf 0,L/2)))  \|^2 \\[6pt]
&\leq& C[D^{k+1}] \|\varpi ; H^{k+3} (B(\mathbf 0,L)/(B(\mathbf 0,L/2)))\|^2\\[6pt]
&\leq & C[D^{k+1}] K^{reg}_{k+1} \left[ \|\vv{v}^* ; H^{k+2}(B(\mathbf 0,L))\|^2 + \|w^* ; H^{k+1}(B(\mathbf 0,L))\|^2\right]\,.
\end{array}
\end{multline*}
For the second term, we apply  \eqref{eq_CL} and a H\"older inequality: 
\begin{multline*}
\left| \int_{B(\mathbf 0,L)}
(k+m+1) \nabla \gamma \gamma^{3+m+k} \nabla D^{k+1} \varpi D^{k+1} \varpi\right| \\
\begin{array}{rcl} 
&\leq&
\displaystyle K[C_{ell},C^{reg}_2] \int_{B(\mathbf 0,L)}
 \gamma^{7/2+m+k} |\nabla D^{k+1} \varpi | | D^{k+1} \varpi|  \, \\ 
&\leq& \displaystyle \dfrac{1}{2} \int_{B(\mathbf 0,L)}
 \gamma^{3+m+(k+1)} |\nabla D^{k+1} \varpi |^2 + K[C_{ell},C^{reg}_2] \int_{B(\mathbf 0,L)} \gamma^{3+m+k} | D^{k+1} \varpi| ^2 \,.
\end{array}
\end{multline*}
To bound the last term, we note that $|D^{k+1} \varpi| \leq C[D^{k+1}]  |\nabla^{k+1} \varpi|.$
Consequently, applying the induction assumption, we obtain:
\begin{multline*}
|RHS_1| \leq C [D^{k+1}]  K[C_{ell},C_{2}^{reg},K_{m,k}^{sing},K^{reg}_{k+1}] \Big\{
\int_{B(\mathbf 0,L)} |\gamma|^{3+m} |\nabla \varpi|^2
+ \|\vv{v}^* ; H^{k+2}(B(\mathbf 0,L))\|^2 \\
+ \|w^* ; H^{e_{k+1}}(B(\mathbf 0,L))\|^2\Big\}
+ \displaystyle \dfrac{1}{2} \int_{B(\mathbf 0,L)}
 \gamma^{3+m+(k+1)} |\nabla D^{k+1} \varpi |^2 \,.
\end{multline*}

As for $RHS_2$, we distinguish between two cases.
If $k+m \geq 1$ we have that $2(m+k+1) \geq 3+m+k$ so that a standard H\"older inequality yields, with the induction assumption:
\begin{multline*}
\left| \int_{B(\mathbf 0,L)} (D^{k+1} w^* - \dfrac 12 {\rm div} D^{k+1} (\gamma_t+\gamma_b) \vv{v}^* ) \gamma^{k+m+1} D^{k+1} \varpi \right| \\
\begin{array}{cl}
\leq  & \displaystyle C[D^{k+1}]\left[ \|w^* ; H^{k+1}(B(\mathbf 0,L))\|^2 + C_{k+2}^{reg}\|\vv{v}^* ; H^{k+2}(B(\mathbf 0,L))\|^2 
+ \int_{B(\mathbf 0,L)} \gamma^{3+m+k} |\nabla^{k+1} \varpi|^2 \right]
\\[12pt]
\leq& \displaystyle C[D^{k+1}] K[K^{sing}_{m,k},C_{k+2}^{reg}]\Bigg[\int_{B(\mathbf 0,L)} \gamma^{3+m} |\nabla \varpi|^2 + \|\vv{v}^* ; H^{k+2}(B(\mathbf 0,L))\|^2 \\
& \phantom{13245qsdffdqfqqfqdfqfdq}\quad + \|w^* ; H^{e_k}(B(\mathbf 0,L))\|^2\Bigg]\,.
\end{array}
\end{multline*}
Whereas, if $0< (k+m) < 1$ so that $k=0$ in particular, we introduce arbitrarily the missing powers of
$\gamma$ and recall \eqref{eq_CL}-\eqref{eq_boundtbg}:
\begin{multline*}
\left| \int_{B(\mathbf 0,L)} (D^{k+1} w^* - \dfrac 12 {\rm div} D^{k+1} (\gamma_t+\gamma_b) \vv{v}^* ) \gamma^{k+m+1} D^{k+1} \varpi \right| \\
\begin{array}{cl}
\leq  & \displaystyle C
\left| \int_{B(\mathbf 0,L)} \left(\dfrac{|\nabla w^*|}{\gamma^{\frac 12-\frac{(k+m)}{2}}} +\dfrac{ K[C_{2}^{reg},C_{ell}]}{{\gamma^{\frac 12-\frac{(k+m)}{2}}}}\sum_{j=0}^2\gamma^{\frac j2}|\nabla^j \vv{v}^*| \right) \gamma^{\frac{(k+m)}{2}+\frac 32} D^{k+1} \varpi \right|
\end{array}
\end{multline*}
where $1 - (k+m) < 1$ so that:
$$
\int_{B(\mathbf 0,L)}\dfrac{|\nabla w^*|^2}{\gamma^{1-(k+m)}} \leq K[C_{ell}] \|w^* ; H^3(B(\mathbf 0,L))\|^2\,,
$$
and
$$
\int_{B(\mathbf 0,L)} \left|\dfrac 1{\gamma^{\frac 12-\frac{(k+m)}{2}}} \sum_{j=0}^2\gamma^{\frac j2}|\nabla^j \vv{v}^*| \right|^2 \leq K[C_{ell},C^{reg}_0] \|\vv{v}^* ; H^2(B(\mathbf 0,L))\|^2\,.
$$
Hence a H\"older inequality yields that:
\begin{multline*}
\left| \int_{B(\mathbf 0,L)} (D^{k+1} w^* - \dfrac 12 {\rm div} D^{k+1} (\gamma_t+\gamma_b) \vv{v}^* ) \gamma^{k+m+1} D^{k+1} \varpi \right| \\
\leq  \displaystyle  K[K^{sing}_{m,0},C_{2}^{reg},C_{ell}]\Bigg[\int_{B(\mathbf 0,L)} \gamma^{3+m} |\nabla \varpi|^2 + \|\vv{v}^* ; H^{k+2}(B(\mathbf 0,L))\|^2 + \|w^* ; H^{e_k}(B(\mathbf 0,L))\|^2\Bigg]\,.
\end{multline*}

To treat the last terms, we expand the differential operator $R^{k+1}$ and apply \eqref{eq_Ki} yielding that there exists also a constant $C[D^{k+1}]$ for which:
$$
|R^{k+1}| \leq C[D^{k+1}] {\sum_{l=0}^{k} |\nabla^{l+1} \varpi| |\nabla^{k+1-l} \gamma^{3}|  
		\leq  C[D^{k+1}] \sum_{l=0}^{k} K[k+1-l]  |\gamma|^{3 - \frac{k+1-l}{2} } |\nabla^{l+1} \varpi|}\,.
$$
Consequently, we bound similarly as above by applying trace theorems and Proposition \ref{prop_unifreg}:
\begin{multline*}
\left|\int_{\partial B(\mathbf 0,L)} R^{k+1} \cdot \nu \gamma^{k+m+1} D^{k+1} \varpi 
 \right| \leq C[D^{k+1}] K[C^{reg}_{k+1},C_{ell},C^{reg}_2]\|\varpi ; H^{k+1}(\partial B(\mathbf 0,L))\|^2 \, \\
 \begin{array}{cl}
  \leq & C[D^{k+1}]  K[C^{reg}_{k+1},C_{ell},C^{reg}_2] \|\varpi ; H^{k+2}(B(\mathbf 0,L)\setminus B(\mathbf 0,L/2))\|^2 \\[4pt]
  \leq & C[D^{k+1}] K[C^{reg}_{k+1},C_{ell},C^{reg}_2]\left[ \|\vv{v}^* ; H^{k+1}(B(\mathbf 0,L))\|^2 + \|w^* ; H^{e_k}(B(\mathbf 0,L))\|^2\right]\,.
\end{array}
\end{multline*}
For the other term, we recall \eqref{eq_CL} and apply the induction assumption.
This yields:
\begin{multline*}
\left| \int_{B(\mathbf 0,L)} R^{k+1} \cdot (\gamma^{k+m+1} \nabla D^{k+1} \varpi + (k+m+1) \nabla \gamma \gamma^{k+m} D^{k+1} \varpi) \right|\\
\begin{array}{cl}
\leq &\displaystyle 
 K[C^{reg}_{k+1},C_{ell},C^{reg}_2]{\sum_{l=0}^{k}  \int_{B(\mathbf 0,L)}   \gamma^{\frac {3+m+ l}2} |\nabla^{l+1} \varpi| \left( \gamma^{\frac {3+m+(k+1)}2}|\nabla D^{k+1} \varpi| +  \gamma^{\frac {3+m+k}2} |\nabla^{k+1} \varpi| \right)} \\
\leq & \displaystyle
\dfrac{1}{48} \int_{B(\mathbf 0,L)} \gamma^{3+m+(k+1)} |\nabla D^{k+1} \varpi|^2  \\
& \quad + K[C^{reg}_{k+1},C_{ell},C^{reg}_2] K^{sing}_{m,k}\Bigg[\displaystyle \int_{B(\mathbf 0,L)} |\gamma|^{3+m} |\nabla \varpi|^2+ \|\vv{v}^* ; H^{k+1}(B(\mathbf 0,L))\|^2 
\\
& \quad + \|w^* ; H^{e_k}(B(\mathbf 0,L))\|^2\Bigg]
\end{array}
\end{multline*}
We obtain finally that there exists a constant $K^{sing}_{m,k+1}$ depending on $K^{reg}_{k+1},K^{sing}_{m,k},K^{sing}_{0,k}$ and $C^{reg}_{k+2},C^{reg}_2,C_{ell}$ for which:
\begin{multline*}
\left|\dfrac{RHS_1}{12} + RHS_2\right| \\
 \leq C[D^{k+1}] K^{sing}_{m,k+1}\Bigg[ \displaystyle \int_{B(\mathbf 0,L)} |\gamma|^{3+m} |\nabla \varpi|^2 + \|\vv{v}^* ; H^{k+2}(B(\mathbf 0,L))\|^2 + \|w^* ; H^{e_{k+1}}(B(\mathbf 0,L))\|^2\Bigg]
\\
+ \dfrac{3}{48} \int_{B(\mathbf 0,L)} \gamma^{3+m+(k+1)} |\nabla D^{k+1} \varpi|^2\,.
\end{multline*}

We introduce this inequality into \eqref{eq_induction} and obtain that:
\begin{multline*}
\int_{B(\mathbf 0,L)} |\gamma|^{3+m+ (k +1)} |\nabla D^{k+1} \varpi|^2 \\
 \leq C[D^{k+1}]K^{sing}_{m,k+1}\Bigg[\displaystyle \int_{B(\mathbf 0,L)} |\gamma|^{3+m} |\nabla \varpi|^2 + \|\vv{v}^* ; H^{k+2}(B(\mathbf 0,L))\|^2 + \|w^* ; H^{e_{k+1}}(B(\mathbf 0,L))\|^2\Bigg]\,.
\end{multline*}
Given the dependencies of $K^{reg}_{k+1}$ and $K^{sing}_{m,k},K^{sing}_{m,0},$ the operator $D^{k+1}$ being arbitrary, we obtain the expected inequality for the rank $k+1.$ This ends the proof. 

\end{proof}

\subsection{The case of spheres.}
We end this section by computing sharp asymptotic expansions of  
$$
\int_{B(\mathbf 0,L)} \gamma^3 |\nabla \varpi|^2
$$ 
when $\gamma$ represents the distance function between one sphere of radius $S$ and another one of radius $R$. 
Until the end of this section, we assume that, close to the origin, we have:
$$
\gamma_t(x,y) = S - \sqrt{S^2 - (x^2+y^2)} \,, \quad  \gamma_b(x,y) = -R + \sqrt{R^2 - (x^2+y^2)} \,. 
$$ 
Hence, $\gamma$ depends on $r= \sqrt{x^2+y^2}$ only and we have the Taylor expansion:
\begin{equation} \label{eq_tagamma}
\left\{
\begin{array}{rcl}
\gamma_t(r) &= &\dfrac{r^2}{2S} + O(r^4) \,,  \,\\
\gamma_b(t) &= & - \dfrac{r^2}{2R} + O(r^4)\,,
\end{array}
\right.
\quad
\gamma(r) = h + \dfrac{r^2}{2R_1} + \dfrac{r^4}{8R_3^3} + O(r^6)\,, \,
\quad
\text{ close to $r=0,$}  
\end{equation}
where ${R_1}$ and ${R_3}$ satisfy \eqref{eq_R}.\\
Then, we split $f = f_0 + f_1 + f_{R},$ where:
$$
f_0 = w^*(\mathbf 0) \,, \quad f_1 = \dfrac{r}{2} \left(\dfrac{1}{R} - \dfrac{1}{S} \right) \vv{v}^*(\mathbf 0) \cdot \vv{e}_r     + x \partial_x w^*(\mathbf 0) + y \partial_y w^*(\mathbf 0)\,,
\quad 
f_R = f - (f_0 + f_1).
$$
Due to the linearity of the divergence problem \eqref{eq_papp1bis}-\eqref{eq_papp2bis}, the solution $\varpi$ admits  a corresponding decomposition:
$\varpi = \varpi_0+ \varpi_1 + \varpi_R$ with obvious notations. In this section, we compute
separately the asymptotic expansions of the quantities
$$
\int_{B(\mathbf 0,L)} \gamma^3 |\nabla \varpi_0|^2, \quad \int_{B(\mathbf 0,L)} \gamma^3 |\nabla \varpi_1|^2,  \quad
\int_{B(\mathbf 0,L)} \gamma^3 |\nabla \varpi_R|^2.
$$ 
First, as $f_R$ corresponds to $f$ from which the first orders in the Taylor expansion around $0$
are subtracted, we may introduce $(w^*_R,\vv{v}^*_R) \in C^{\infty}(\overline{B(\mathbf 0,L)};\mathbb R \times \mathbb R^2) $ s.t.:
$$
f_R = {w}^*_R - \dfrac{1}{2} {\rm div} ((\gamma_t + \gamma_b){\vv{v}}^*_R)\,, 
$$
We have then:
$$
{w}^*_R(\mathbf 0) =  0\,, \quad \nabla_{x,y} {w}^*_R(\mathbf 0) = \mathbf{0}\,, \quad {\vv{v}}_R^*(\mathbf 0) = \mathbf{0}\,.
$$
Hence, {Proposition \ref{prop_favorable}} entails that, for $h \in (0,1]:$ 
$$
\int_{B(\mathbf 0,L)} \gamma^3 |\nabla \varpi_R|^2 = O(1) \left\{\|\vv{u}^* ;  H^2(B(\mathbf 0,L))\|^2 + \|w^* ; H^3(B(\mathbf 0,L))\|^2 \right\}. 
$$ 
Here and in what follows, we denote by $O(1)$ a quantity which depends on $(R,S,L)$ and $h$ and is bounded by a constant depending only on $R,S,L$ whatever the value
of $h \in (0,1].$

\medskip

It remains to treat the two cases of $\varpi_0$ and $\varpi_1.$
We remark at once that, for $\varpi_0,$ the associated source term $f_0$ is constant, while, for $\varpi_1,$ the associated source term reads
$f_1 = f_1(r,\theta) = f_c r\cos(\theta) + f_s r \sin(\theta)$ where:
 $$
 f_c = \dfrac{1}{2} \left(\dfrac 1R- \dfrac 1S\right)\vv{v}^*(0) \cdot \vv{e}_1 + \partial_x w^*(0)
 \qquad
 f_s =  \dfrac{1}{2} \left(\dfrac 1R- \dfrac 1S\right)\vv{v}^*(0) \cdot \vv{e}_2 + \partial_y w^*(0).
 $$ 

\medskip

We start with $\varpi_0$:
\begin{proposition} \label{prop_casf0}
Under the assumption that $\gamma$ is radial and satisfies \eqref{eq_tagamma}, there holds:
\begin{equation} 
\int_{B(\mathbf 0,L)} \gamma^3 |\nabla \varpi_0|^2 = 72 \pi |f_0|^2 \left[ \dfrac{R_1^2}{h} - \dfrac{3 R_1^4}{R_3^3} |\ln(h)| \right] + O(1)\,. 
\end{equation}
\end{proposition}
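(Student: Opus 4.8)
The plan is to reduce the computation to a one–dimensional integral which can be evaluated by hand. Since the source term $f_0 = w^*(\mathbf 0)$ is a constant and $\gamma$, the ball $B(\mathbf 0,L)$ and the (zero) boundary condition are all invariant under rotations about the origin, uniqueness of the solution to \eqref{eq_papp1bis}--\eqref{eq_papp2bis} forces $\varpi_0$ to be radial, $\varpi_0 = \varpi_0(r)$. It therefore solves the radial equation $(r\gamma^3\varpi_0')' = -12 f_0\, r$ on $(0,L)$ with $\varpi_0(L) = 0$; since $\varpi_0$ is smooth at the origin, $\varpi_0'(0) = 0$, so integrating once gives $\varpi_0'(r) = -6 f_0\, r/\gamma^3(r)$. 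Passing to polar coordinates then yields
\begin{equation*}
\int_{B(\mathbf 0,L)} \gamma^3|\nabla\varpi_0|^2 = 2\pi\int_0^L \gamma^3(r)\,|\varpi_0'(r)|^2\, r\,{\rm d}r = 72\pi\,|f_0|^2 \int_0^L \frac{r^3}{\gamma^3(r)}\,{\rm d}r\,,
\end{equation*}
so everything boils down to the asymptotic behaviour of $I(h) := \int_0^L r^3\gamma^{-3}(r)\,{\rm d}r$ as $h\to 0$.

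To compute $I(h)$ I would split $[0,L] = [0,\eta]\cup[\eta,L]$ with $\eta\in(0,L)$ fixed and small enough that \eqref{eq_tagamma} holds on $[0,\eta]$ with a remainder $\rho(r)$ obeying $|\rho(r)|\leq C r^6$. On $[\eta,L]$ one has $\gamma(r)\geq\gamma(\eta)\geq c(\eta)>0$ uniformly in $h\in(0,1]$, so that part of $I(h)$ is $O(1)$. On $[0,\eta]$ I would introduce the model weights $\gamma_0 := h + r^2/(2R_1)$ and $\gamma_1 := r^4/(8R_3^3)$, so $\gamma = \gamma_0 + \gamma_1 + \rho$, and work with the first–order expansion $\gamma^{-3} = \gamma_0^{-3} - 3\gamma_1\gamma_0^{-4} + E(r)$. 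The substitution $u = r^2$ makes the two model integrals elementary: the explicit primitive of $u(h+u/(2R_1))^{-3}$ gives $\int_0^\eta r^3\gamma_0^{-3}\,{\rm d}r = R_1^2/h + O(1)$, while the rescaling $u = 2R_1 h\,v$ together with $\int_0^M v^3(1+v)^{-4}\,{\rm d}v = \ln(1+M) + O(1)$ gives $\int_0^\eta r^3\gamma_1\gamma_0^{-4}\,{\rm d}r = (R_1^4/R_3^3)\,|\ln h| + O(1)$. Combining the two yields $I(h) = R_1^2/h - 3R_1^4R_3^{-3}|\ln h| + O(1)$, which after multiplication by $72\pi|f_0|^2$ is precisely the announced expansion.

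It then remains to show that the discarded error contributes only $O(1)$, i.e. $\int_0^\eta r^3|E(r)|\,{\rm d}r = O(1)$. The crucial ingredient is the uniform lower bound $\gamma_0(r)\geq r^2/(2R_1)$, together with $\gamma(r)\geq h + C_{ell}r^2$ from \eqref{eq_Cell}, so that $\gamma,\gamma_0\geq c\, r^2$ on $[0,\eta]$ with $c$ independent of $h$. Writing $t := \gamma_1 + \rho = O(r^4)$ and using the second–order Taylor estimate $|(a+t)^{-3} - a^{-3} + 3a^{-4}t| \leq 6\min(a,a+t)^{-5}\,t^2$ with $a = \gamma_0$, one gets $|E(r)| \leq 6\min(\gamma_0,\gamma)^{-5}t^2 + 3\gamma_0^{-4}|\rho| \leq C\,r^{-10}\cdot r^{8} + C\,r^{-8}\cdot r^{6} = C\,r^{-2}$, whence $\int_0^\eta r^3\cdot C r^{-2}\,{\rm d}r = O(1)$. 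The only genuinely delicate point is this last bookkeeping step: one must check that both the $O(r^6)$ tail in the expansion of $\gamma$ and the quadratic-and-higher terms in the binomial expansion of $\gamma^{-3}$ about $\gamma_0$ carry enough powers of $r^2$ in the denominator to survive multiplication by the Jacobian $r^3$ and remain $h$–uniformly integrable near $r=0$; everything else is routine.
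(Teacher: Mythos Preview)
Your proof is correct and follows essentially the same route as the paper: both reduce to the radial ODE, integrate once to get $\varpi_0'(r)=-6f_0 r/\gamma^3(r)$, reduce the $V$-norm to $72\pi|f_0|^2\int_0^L r^3\gamma^{-3}\,{\rm d}r$, split near/away from the origin, and expand $\gamma^{-3}$ about $(h+r^2/(2R_1))^{-3}$ keeping the first-order correction in $r^4/(8R_3^3)$. Your treatment of the remainder $E(r)$ via the second-order Taylor bound and the uniform lower bounds $\gamma,\gamma_0\geq c r^2$ is in fact more explicit than the paper's, which simply invokes a power-series expansion for $r$ small and absorbs the tail in $O(1)$.
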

\begin{proof}
By linearity, we only treat the case $f_0=1.$ Under our symmetry assumptions, the unique solution $\varpi_0$ to \eqref{eq_papp1bis}-\eqref{eq_papp2bis} with $f=1$ is certainly a radial function 
and explicit computations yield that:
$$
\varpi_0(r) =  \int_{r}^{L} \dfrac{6s}{\gamma^3(s)}{\rm d}s \,, \quad \partial_r \varpi_0(r) = - \dfrac{6r}{\gamma^3(r)}  \,, \quad\forall \, r \in (0,L)\,.
$$
Consequently, we have:
$$
\int_{B(\mathbf 0,L)} \gamma^3 |\nabla \varpi_0|^2 =  72\pi \int_0^L \dfrac{r^3{\rm d}r}{\gamma^3(r)} .
$$
At this point, we note that 
$$\gamma(r) = h + \dfrac{r^2}{2R_1} + \dfrac{r^4}{8R_3^3} + rem(r)
$$ 
with $|rem(r)| \leq C r^{6}$ for all $r \in (0,L)\,.$ Consequently, introducing $r_0$ small enough so that we might expand $\gamma$ in power series, we have:  
\begin{eqnarray*}
 \int_{0}^{L} \frac{r^3}{\gamma(r)^3}{\rm d}r\, &=& \int_{0}^{r_0} \frac{r^3}{\gamma(r)^3}{\rm d}r + O(1)\, \\
 &=&  \int_0^{r_0} \dfrac{r^3{\rm d}r}{(h + \frac{r^2}{2R_1} )^3}
 - \dfrac{3}{8R_3^3}\int_0^{r_0} \dfrac{r^7{\rm d}r}{(h + \frac{r^2}{2R_1} )^4}  + O(1) \, \\
 &=& \dfrac{1}{h} \int_0^{\frac {r_0}{\sqrt{h}}} \dfrac{s^3{\rm d}s}{(1 + \frac{s^2}{2R_1} )^3} - \dfrac{3}{8R_3^3}\int_0^{\frac {r_0}{\sqrt{h}}} \dfrac{s^7{\rm d}s}{(1 + \frac{s^2}{2R_1} )^4} + O(1),
 \end{eqnarray*}
 where explicit computations yield:
 $$
  \int_0^{\frac {r_0}{\sqrt{h}}} \dfrac{s^3{\rm d}s}{(1 + \frac{s^2}{2R_1} )^3}  = R_1^2 + h O(1)\,, \quad
  \int_0^{\frac {r_0}{\sqrt{h}}} \dfrac{s^7{\rm d}s}{(1 + \frac{s^2}{2R_1} )^4} = 8 R_1^4 |\ln(h)| + O(1). 
 $$
 Finally, we obtain:
$$
\int_{B(\mathbf 0,L)} \gamma^3 |\nabla \varpi_0|^2  = 72 \pi \left[ \dfrac{R_1^2}{h} -  \dfrac{3 R_1^4}{R_3^3} |\ln(h)| \right] + O(1)\,.
$$

\end{proof}

We end this section with the case of an $f_1$-like source term: 
\begin{proposition} \label{prop_casf1}
Assume that $\gamma$ is radial and satisfies \eqref{eq_tagamma}. Given a source term of the form $f(r,\theta) =  f_c r \cos(\theta) + f_s r \sin(\theta) $ on $B(\mathbf 0,L)$, there holds:
\begin{equation} 
\int_{B(\mathbf 0,L)} \gamma^3 |\nabla \varpi|^2 =  \left( |f_c|^2 + |f_s|^2 \right) \dfrac{288\pi R_1^3}{5}   |\ln(h)| + O(1) \,,  
\end{equation}
\end{proposition}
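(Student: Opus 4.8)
The plan is to exploit the radial symmetry of $\gamma$ and of $B(\mathbf 0,L)$ to separate variables, to reduce the Dirichlet energy of $\varpi$ to a one-dimensional integral, and then to run a matched asymptotic analysis of the resulting radial profile; the $|\ln h|$ will come from an explicit logarithmic integral in the outer region.

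First I would remove the dependence on $(f_c,f_s)$. By linearity $\varpi = f_c\varpi_c + f_s\varpi_s$, where $\varpi_c$ (resp. $\varpi_s$) solves \eqref{eq_papp1bis}--\eqref{eq_papp2bis} with source $r\cos\theta$ (resp. $r\sin\theta$); by rotational invariance these read $\varpi_c = \phi(r)\cos\theta$ and $\varpi_s = \phi(r)\sin\theta$ with the \emph{same} radial profile $\phi$, and $\int_{B(\mathbf 0,L)}\gamma^3\nabla\varpi_c\cdot\nabla\varpi_s = 0$ after integration in $\theta$. Hence it suffices to show $\int_{B(\mathbf 0,L)}\gamma^3|\nabla(\phi\cos\theta)|^2 = \tfrac{288\pi R_1^3}{5}|\ln h| + O(1)$. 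Inserting $\varpi = \phi(r)\cos\theta$ into \eqref{eq_papp1bis} gives the Sturm--Liouville problem
\begin{equation*}
(r\gamma^3\phi')' - \frac{\gamma^3}{r}\,\phi = -12\,r^2 \quad \text{on } (0,L)\,, \qquad \phi(L) = 0\,,\quad \phi(r)/r \text{ bounded as } r\to 0\,,
\end{equation*}
and testing \eqref{eq_papp1bis} against $\varpi$ (which vanishes on $\partial B(\mathbf 0,L)$) yields the energy identity
\begin{equation*}
\int_{B(\mathbf 0,L)}\gamma^3|\nabla\varpi|^2 \;=\; 12\int_{B(\mathbf 0,L)} (r\cos\theta)\,\varpi \;=\; 12\pi\int_0^L r^2\phi(r)\,{\rm d}r \;=:\; 12\pi\,I(h)\,.
\end{equation*}
Thus everything reduces to the asymptotics of $I(h)$ as $h\to0$.

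Next I would analyse $\phi$ by matched asymptotics, splitting $(0,L)$ into an inner zone $r\lesssim\sqrt h$ and an outer zone $\sqrt h\lesssim r\le L$. In the inner zone the lubrication scaling $r=\sqrt h\,\rho$, $\phi(r)=h^{-3/2}\psi_h(\rho)$ turns the ODE into $(\rho\,\Gamma_h^3\,\psi_h')' - \Gamma_h^3\psi_h/\rho = -12\rho^2$ with $\Gamma_h(\rho)=\gamma(\sqrt h\rho)/h\to 1+\rho^2/(2R_1)$ locally uniformly by \eqref{eq_tagamma}, and the inner contribution $\int_0^{\sqrt h} r^2\phi\,{\rm d}r = \int_0^1\rho^2\psi_h(\rho)\,{\rm d}\rho$ is $O(1)$. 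In the outer zone \eqref{eq_tagamma} gives that $\gamma^3$ equals $r^6/(8R_1^3)$ to leading order, so $\phi$ solves $(r^7\phi')'-r^5\phi = -96R_1^3 r^2$ modulo lower-order terms; the homogeneous Euler exponents are $-3\pm\sqrt{10}$ and the forcing exponent is $-3$, so the relevant solution is $\tfrac{48R_1^3}{5}r^{-3} + A\,r^{-3+\sqrt{10}} + B\,r^{-3-\sqrt{10}}$, where matching with the inner scale $\phi(\sqrt h)=O(h^{-3/2})$ forces $B$ to be negligible and $A$ to be $O(1)$ (the condition $\phi(L)=0$ likewise contributing only $O(1)$). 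Since $\int_{\sqrt h}^L r^2\cdot r^{-3+\sqrt{10}}{\rm d}r = O(1)$ whereas $\int_{\sqrt h}^L r^2\cdot \tfrac{48R_1^3}{5}r^{-3}{\rm d}r = \tfrac{48R_1^3}{5}(\ln L - \tfrac12\ln h) = \tfrac{24R_1^3}{5}|\ln h| + O(1)$, we get $I(h)=\tfrac{24R_1^3}{5}|\ln h|+O(1)$, hence $\int_{B(\mathbf 0,L)}\gamma^3|\nabla\varpi|^2 = 12\pi\,I(h) = \tfrac{288\pi R_1^3}{5}|\ln h| + O(1)$, which gives the claim after multiplying back by $|f_c|^2+|f_s|^2$.

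The delicate point is to make this expansion rigorous with the remainder \emph{genuinely} $O(1)$ and not merely $o(|\ln h|)$: one must control the transition layer at $r\sim\sqrt h$, the boundary layer at $r=L$, and every subleading term. The clean way, consistent with the rest of Section \ref{sec_Gilbarg}, is to build from the leading profile an explicit $\bar\varpi\in H^1_0(B(\mathbf 0,L))$ of pure $\cos\theta$ type (the profile $\tfrac{48R_1^3}{5}r^{-3}$, cut off and corrected near $r=0$ and near $r=L$ so that it vanishes on $\partial B(\mathbf 0,L)$), to check that the source defect $-\tfrac1{12}{\rm div}(\gamma^3\nabla\bar\varpi)-f$ is of the favorable form \eqref{eq_formf} with data vanishing at the origin up to the order required by \eqref{ass_favorable}, and then to invoke Proposition \ref{prop_favorable} (together with Propositions \ref{prop_reg2}--\ref{prop_reg} for the cross term $\int\gamma^3\nabla\bar\varpi\cdot\nabla(\varpi-\bar\varpi)$, which is rewritten through the equations) to obtain $\int_{B(\mathbf 0,L)}\gamma^3|\nabla(\varpi-\bar\varpi)|^2 = O(1)$ uniformly in $h$. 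The coefficient $\tfrac{288\pi R_1^3}{5}$ is then read off from $\int_{B(\mathbf 0,L)}\gamma^3|\nabla\bar\varpi|^2$ by an elementary computation, exactly as in the proof of Proposition \ref{prop_casf0}. The main obstacle is thus the construction of an approximation $\bar\varpi$ precise enough that the error has bounded energy, the required precision being dictated by \eqref{ass_favorable}.
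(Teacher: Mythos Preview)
Your reduction to a radial profile, the energy identity $\int\gamma^3|\nabla\varpi|^2 = 12\pi\int_0^L r^2\phi\,{\rm d}r$, and the outer asymptotics $\phi\sim 48R_1^3/(5r^3)$ are all correct and lead to the right constant. The overall strategy---build an explicit approximation and show the remainder has bounded weighted energy---is also the paper's. The difference, and the place where your outline has a real gap, is in the choice of approximation and the tool used to control the error.

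Your candidate $\bar\varpi$ is the \emph{outer} profile $48R_1^3/(5r^3)\cos\theta$, ``cut off and corrected near $r=0$''. If the cutoff is at a fixed scale $r_0>0$, then $\int\gamma^3|\nabla\bar\varpi|^2=O(1)$ and carries no $|\ln h|$ at all, so $\bar\varpi$ is not a leading-order approximation. If instead you cut off at scale $\sqrt h$ (which does produce the logarithm), the defect $-\tfrac1{12}{\rm div}(\gamma^3\nabla\bar\varpi)-f$ becomes $h$-dependent with $H^2$ or $H^3$ norms blowing up like negative powers of $h$, so Proposition~\ref{prop_favorable} gives nothing uniform. In either case the hypothesis \eqref{ass_favorable} cannot be met with $h$-independent data: near the origin the source $f=x$ itself has $\nabla f(0)\neq 0$, and no purely outer construction cancels this.

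The paper avoids this by building a \emph{uniformly valid} approximation rather than an outer one: it replaces $\gamma$ by the quadratic model $\hat\gamma(r)=h+r^2/(2R_1)$ and solves the resulting ODE exactly on $(0,\infty)$ (Lemma~\ref{lem_q}), obtaining a profile $q$ with $q(0)=0$ and $q(s)=48R_1^3/(5s^3)+O(s^{-4})$ at infinity. The approximation is then $\hat q_h(r)=h^{-3/2}q(r/\sqrt h)$ minus a small boundary correction at $r=L$. This needs no cutoff at $r=0$, and the error $D[\varpi]=\varpi-\hat\varpi$ satisfies an equation whose right-hand side is ${\rm div}[(\hat\gamma^3-\gamma^3)\nabla\hat\varpi]$ plus an $O(1)$ term. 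The key cancellation is $|\hat\gamma^3-\gamma^3|\le K\hat\gamma^4$ (two extra powers of $r$ from the Taylor expansion \eqref{eq_tagamma}), which is exactly what makes the error bounded. The paper does \emph{not} invoke Proposition~\ref{prop_favorable} here; it multiplies the error equation by $D[\varpi]/\gamma$ and uses the superharmonicity of $\gamma$ directly to get $\int\gamma^2|\nabla D[\varpi]|^2=O(1)$, then closes with Proposition~\ref{prop_reg2} (case $n=1$, where $w^*(0)=0$) on $\varpi$ for the cross term. So the missing ingredient in your plan is precisely the inner profile supplied by Lemma~\ref{lem_q}, and the error estimate is a bespoke weighted energy argument rather than an appeal to the black-box propositions.
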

\begin{proof}
Up to shift $\theta$ with a phase and call the linearity of our problem, we  prove the above result in the
case $f_c =1$ and $f_s=0.$ Then, the proof is divided into 3 steps.

\medskip

\paragraph{\em Step 1: reduction to an ode.}
Under our symmetry assumptions, the unique solution to \eqref{eq_papp1bis}-\eqref{eq_papp2bis} reads $\varpi(r,\theta) = q_{h}(r) \cos(\theta)$ where $q_h$ is the unique solution to the ode:
\begin{eqnarray}
\dfrac{1}{r} \partial_r \left[ \gamma(r)^3r \partial_r q_h \right]  - \dfrac{\gamma^3(r) q_h(r)}{r^2} = 
- 12 r\,, \phantom{0}&& r \in (0,L) \,  \label{eq_edoqh}\\
q_h(r) =0\,, \phantom{f_cr} && r \in \{0,L\}\,. \label{eq_bcqh}
\end{eqnarray}
The boundary condition in $r=L$ is the translation of $\varpi(r,\theta) =0$ on $r=L$
while the boundary condition in $r=0$ is derived from the condition that:
$$
\pi\left[  \int_{0}^{L} \gamma(r)^3 \left[ |\partial_r q_h(r)|^2 + \dfrac{|q_h(r)|^2}{r^2}\right]r  {\rm d}r\right]
= \int_{B(\mathbf 0,L)} \gamma^3 |\nabla \varpi|^2 {\rm d}x {\rm d}y < \infty. 
$$
We note that \eqref{eq_edoqh} rewrites:
$$
\partial_{rr} q_h + \left(\dfrac{1}{r} + 3 \dfrac{\gamma'(r)}{\gamma(r)} \right) \partial_r q_h - \dfrac{q_h}{r^2} = - \dfrac{12r}{\gamma^3}\,.
$$

\medskip

\paragraph{\em Step 2: Construction of an approximate solution.}
We introduce then $q$ the unique solution to an auxiliary problem. 
The construction and properties of this auxiliary function are stated in the following lemma
whose proof is postponed to the appendix:
\begin{lemma} \label{lem_q}
Given $R_1 >0,$ there exists a unique $q \in C([0,\infty)) \cap C^{\infty}((0,\infty))$ solution to 
\begin{eqnarray*}
\partial_{ss} q +  \left(\dfrac{1}{s} +  \dfrac{3s}{R_1  (1+\frac{s^2}{2R_1})} \right) \partial_s q - \dfrac{q}{s^2} = 
-\dfrac{12 s}{(1+\frac {s^2}{2R_1})^3} \,, & & s \in (0,\infty) \,, \\
q(0) = 0 \quad \lim_{s \to \infty} q(s) = 0\,.  &&\,. 
\end{eqnarray*}
Furthermore we have the asymptotic description:
\begin{equation}
q(s) = \dfrac{48 R_1^3}{5 s^3} + rem(s) \quad
\partial_sq(s) =  - \dfrac{144 R_1^3}{5 s^4} + \dfrac{rem(s)}{s} \,.\label{eq_decayq}
\end{equation}
where $|rem(s)| \leq K[R_1]/s^4$ for $s > 1.$
\end{lemma}
With that auxiliary function at-hand, we set:
$$
\hat{q}_h(r) = \dfrac{1}{h^{\frac 32}} q \left( \dfrac{r}{\sqrt{h}}\right)  - \dfrac{r^2}{L^2h^{\frac 32}} q\left(\dfrac{L}{\sqrt{h}} \right)\,,  \forall\, r \in (0,L)\,, \quad
\hat{\omega} = \hat{q}_h(r) \cos(\theta) \,  \text{ on $B(\mathbf 0,L)$}.
$$
Introducing $\hat{\gamma}(r) = (h+r^2/(2R_1)),$ we obtain by substitution that $\hat{q}_h$
is a solution to 
\begin{eqnarray*}
\dfrac{1}{r} \partial_r \left[ \hat{\gamma}(r)^3r \partial_r \hat{q}_h \right]  - \dfrac{\hat{\gamma}^3(r) \hat{q}_h(r)}{r^2} = 
-12 r - \dfrac{1}{L^2h^{\frac 32}} q\left(\dfrac{L}{\sqrt{h}} \right) \hat{\chi}(r) \,, \phantom{0}&& r \in (0,L) \,  \label{eq_edoqh2}\\
\hat{q}_h(r) =0\,, \phantom{f_cr} && r \in \{0,L\}\,. \label{eq_bcqh2}
\end{eqnarray*}
where 
$$
\hat{\chi}(r) =\dfrac{2}{r} \partial_r \left[ \hat{\gamma}(r)^3r^2 \right] -  \hat{\gamma}^3(r).
$$
Consequently, $\hat{\omega}$ is an $H^1$-solution to:
\begin{eqnarray*}
- \dfrac{1}{12}{\rm div}[ \hat{\gamma}^3 \nabla \hat{\varpi} ]= \left( r + \dfrac{1}{12 L^2h^{\frac 32}} q\left(\dfrac{L}{\sqrt{h}} \right) \hat{\chi}(r) \right) \cos(\theta)\,, \phantom{0}&& \text{ on $B(\mathbf 0,L),$} \, \\
\hat{\varpi} = 0 \,  \phantom{12 r + \dfrac{1}{L^2} q\left(\dfrac{L}{\sqrt{h}} \right) \hat{\chi}(r) } &&\text{ on $r=L$}\,.
\end{eqnarray*}
We note that, for $n \geq 3,$ there holds:
$$
\hat{I}_n := \int_{B(\mathbf 0,L)} \hat{\gamma}^n |\nabla \hat{\varpi}|^2 {\rm d}x {\rm d}y = 
\pi\left[  \int_{0}^{L} \hat{\gamma}(r)^n \left[ |\partial_r \hat{q}_h(r)|^2 + \dfrac{|\hat{q}_h(r)|^2}{r^2}\right]r  {\rm d}r\right]
$$
Replacing $\hat{q}_h$ with its value and noticing that $\dfrac{q(L/\sqrt{h})}{L^2h^{\frac 32}} =O(1)$ because of \eqref{eq_decayq}, this entails:
\begin{eqnarray*}
\hat{I}_n &=& \dfrac{\pi}{h^3} \left[  \int_{0}^{L} \left(h+ \dfrac{r^2}{2R_1}\right)^n \left[ \dfrac{|\partial_r q({r}/{\sqrt{h}})|^2}{h} + \dfrac{|{q}(r/\sqrt{h})|^2}{r^2}\right]r  {\rm d}r\right] \\
&&+ O\left( \left[1 +  \int_{0}^{L} \left(h+ \dfrac{r^2}{2R_1}\right)^{2n} \left[ \dfrac{|\partial_r q({r}/{\sqrt{h}})|^2}{h} + \dfrac{|{q}(r/\sqrt{h})|^2}{r^2}\right]r  {\rm d}r\right]\right)\,.
\end{eqnarray*}
We bound the first integral on the right-hand side by changing variable $r \to \sqrt{h}s.$ This yields:
\begin{multline*}
 \int_{0}^{L} \left(h+ \dfrac{r^2}{2R_1}\right)^n \left[ \dfrac{|\partial_r q({r}/{\sqrt{h}})|^2}{h} + \dfrac{|{q}(r/\sqrt{h})|^2}{r^2}\right]r  {\rm d}r\\
 = h^{n} \int_0^{L/\sqrt{h}} \left(1+ \dfrac{s^{2}}{2R_1}\right)^n  \left[ |\partial_s q(s)|^2 + \dfrac{|{q}(s)|^2}{s^2}\right] s {\rm d}s\,.
\end{multline*}
Given the asymptotic expansion of $q$ this entails:
\begin{multline*}
 \int_{0}^{L} \left(h+ \dfrac{r^2}{2R_1}\right)^n \left[ \dfrac{|\partial_r q({r}/{\sqrt{h}})|^2 }{h}+ \dfrac{|{q}(r/\sqrt{h})|^2}{(r/\sqrt{h})^2}\right]r  {\rm d}r 
 =
 \left\{
 \begin{array}{ll}
  \dfrac{h^{3}R_1^3}{40} \left(  48^2 |\ln(h)| + O(1)\right) & \text{ if $n=3$}\\[8pt]
 h^{3}O(1) & \text{ if $n \geq 4$}.
 \end{array}
 \right.
 \end{multline*}
  In particular we obtain finally that:
\begin{equation} \label{eq_asymptotpichapeau}
\int_{B(\mathbf 0,L)} \hat{\gamma}^{n} |\nabla \hat{\varpi}|^2  = 
\left\{
\begin{array}{ll}
 \dfrac{288R_1^3\pi}{5} |\ln(h)| + O(1)   & \text{ for $n=3$}\\[8pt]
O(1) & \text{ for arbitrary $n \geq 4.$}
\end{array}
\right.
\end{equation}

\paragraph{\em Step 3: Computing the distance between $\varpi$ and $\hat{\varpi}$}
 We introduce the difference $D[\varpi] := \varpi - \hat{\varpi}.$ It is the solution to: 
 \begin{eqnarray}
- \dfrac{1}{12}{\rm div}[ {\gamma}^3 \nabla D[\varpi] ]= -\dfrac{1}{12} {\rm div} \left[ {(\hat{\gamma}^3 -{\gamma}^3)} \nabla \hat{\varpi}  \right] - \dfrac{1}{12 L^2h^{\frac 32}} q\left(\dfrac{L}{\sqrt{h}} \right) \hat{\chi}(r){\cos(\theta)} \,, \phantom{0} \!\!\!\!\!\!\!\!\!\!\!\!\!\!\!\!&& \text{ on $B(\mathbf 0,L)$} \,  \label{eq_Dvarpi}\\
D[\varpi]= 0 \,  \phantom{ \dfrac{1}{12} {\rm div} \left[ (\gamma^3 - \hat{\gamma}^3) \nabla \hat{\varpi}  \right] +  \dfrac{1}{12 L^2} q\left(\dfrac{L}{\sqrt{h}} \right) \hat{\chi}(r) \,,} \!\!\!\!\!\!\!\!\!\!\!\!\!\!\!\!&&\text{ on $\partial B(\mathbf 0,L)$}\,.
\end{eqnarray}
Multypling \eqref{eq_Dvarpi} with $D[\varpi]/\gamma,$ we obtain, after integration by parts:
\begin{equation} \label{eq_estimDvarpi}
\dfrac{1}{12}\left[ \int_{B(\mathbf 0,L)}  \gamma^2 |\nabla D[\varpi]|^2 - \dfrac 14 \int_{B(\mathbf 0,L)} \nabla |\gamma|^2 \nabla |D[\varpi]|^2  \right] = RHS.
\end{equation}
On the left-hand side of this identity, we have:
\begin{multline*}
\dfrac{1}{12}\left[ \int_{B(\mathbf 0,L)}  \gamma^2 |\nabla D[\varpi]|^2 - \dfrac 14 \int_{B(\mathbf 0,L)} \nabla |\gamma|^2 \nabla |D[\varpi]|^2  \right]   \\
\begin{array}{rcl}
&=&\displaystyle \dfrac{1}{12}\left[ \int_{B(\mathbf 0,L)}  \gamma^2 |\nabla D[\varpi]|^2 + \dfrac 14 \int_{B(\mathbf 0,L)} [\Delta |\gamma|^2]  |D[\varpi]|^2  \right]  \\
	&\geq& \displaystyle \dfrac{1}{12} 	\left[ \int_{B(\mathbf 0,L)}  \gamma^2 |\nabla D[\varpi]|^2 + \dfrac c2 \int_{B(\mathbf 0,L)}  \gamma  |D[\varpi]|^2  \right].
\end{array}
\end{multline*}
as $\Delta |\gamma|^2 = 2 [\gamma \Delta \gamma  +  |\nabla \gamma|^2]   \geq 2 c \gamma$
on $B(\mathbf 0,L)$. On the right-hand side, we obtain the bound above:
$$
|RHS| \leq \dfrac{I_1}{12} + \dfrac{q(L/\sqrt{h})}{12L^2h^{\frac 32}} I_2
$$
where, after integration by parts:
\begin{eqnarray*}
I_1 &=& \left| \int_{B(\mathbf 0,L)} {\rm div} \left[ (\gamma^3 - \hat{\gamma}^3) \nabla \hat{\varpi}  \right]  \dfrac{D[{\varpi}]}{\gamma}\right| \\
 &\leq & \int_{B(\mathbf 0,L)} \dfrac{ |\gamma^3 - \hat{\gamma}^3|}{\gamma^2} |\nabla \gamma| |\nabla \hat{\varpi} | |D[\varpi]| +  \int_{B(\mathbf 0,L)} \dfrac{ |\gamma^3 - \hat{\gamma}^3|}{\gamma}  |\nabla \hat{\varpi} | |\nabla D[\varpi]|   \\
       &\leq & \dfrac{C}{\varepsilon} \int_{B(\mathbf 0,L)}   \dfrac{ (\gamma^3 - \hat{\gamma}^3)^2}{\gamma^2} \left( \dfrac{|\nabla \gamma|^2}{|\gamma|^3} + \dfrac{1}{\gamma^2}  \right) |\nabla \hat{\varpi}|^{2} + \varepsilon   \int_{B(\mathbf 0,L)} \left( \gamma^2 |\nabla D[\varpi]|^2 + \gamma |D[\varpi]|^2\right).
\end{eqnarray*}
for arbitrary $\varepsilon >0.$ At this point, we remark that, as $\gamma$ and $\hat{\gamma}$ share the same (non-vanishing) Taylor expansion up to the second order, we have, with a constant $K$
depending only on $R,S:$  
$$
|\gamma^3 - \hat{\gamma}^3 | \leq K   \hat{\gamma}^{4}.
$$
Recalling \eqref{eq_CL} to bound $\nabla \gamma$ and \eqref{eq_asymptotpichapeau} we obtain:
$$
\int_{B(\mathbf 0,L)}   \dfrac{ (\gamma^3 - \hat{\gamma}^3)^2}{\gamma^2}\left( \dfrac{|\nabla \gamma|^2}{|\gamma|^3} + \dfrac{1}{\gamma^2}  \right)   |\nabla \hat{\varpi}|^{2} \leq K\int_{B(\mathbf 0,L)} \hat{\gamma}^{4}  |\nabla \hat{\varpi}|^{2} = O(1),
$$
and finally, for arbitrary positive $\varepsilon,$ we have:
$$
I_1 \leq \dfrac{O(1)}{\varepsilon} + \varepsilon   \int_{B(\mathbf 0,L)} \left( \gamma^2 |\nabla D[\varpi]|^2+  \gamma |D[\varpi]|^2 \right).
$$
Similarly, we bound $I_2:$
\begin{eqnarray*}
I_2  &=& \left| \int_{B(\mathbf 0,L)} \hat{\chi}(r) \cos(\theta)  \dfrac{D[\varpi]}{\gamma}\right|\\ 
&\leq &   \dfrac{C}{\varepsilon} \int_{0}^{L}  \left|\dfrac{1}{r} \partial_r \left[ \hat{\gamma}(r)^3r^2 \right] -  \hat{\gamma}^3(r)\right|^2\dfrac{r{\rm d}r}{\gamma^3}   + \varepsilon \int_{B(\mathbf 0,L)} \gamma |D[\varpi]|^2, 
\end{eqnarray*}
where there is a constant $K$ depending only on the characteristics of $\gamma$ for which
$$
\left|\dfrac{1}{r} \partial_r \left[ \hat{\gamma}(r)^3r^2 \right] -  \hat{\gamma}^3(r)\right| \leq 
K \gamma^3
$$
Hence, we have:
$$
\int_{0}^{L}  \left|\dfrac{1}{r} \partial_r \left[ \hat{\gamma}(r)^3r^2 \right] -  \hat{\gamma}^3(r)\right|^2\dfrac{r{\rm d}r}{\gamma^3}  = O(1)
$$
and finally, for arbitrary $\varepsilon >0,$ there holds:
$$
I_2 \leq  \varepsilon \int_{B(\mathbf 0,L)} \gamma |D[\varpi]|^2 + \dfrac{O(1)}{\varepsilon}.
$$
Introducing the above computations of the left-hand side and right-hand side into \eqref{eq_estimDvarpi}, noticing that 
$$
\dfrac{1}{12 L^2h^{\frac 32}} q\left(\dfrac{L}{\sqrt{h}} \right)  = O(1)
$$
thanks to \eqref{eq_decayq}, we obtain, choosing $\varepsilon$
 sufficiently small:
 \begin{equation} \label{eq_estimDvarpifin}
 \int_{B(\mathbf 0,L)}  \gamma^2 |\nabla D[\varpi]|^2 +  \int_{B(\mathbf 0,L)}  \gamma  |D[\varpi]|^2 = O(1).
 \end{equation}

\paragraph{\em Step 4: Conclusion.} We are now in position to compute the asymptotics of 
$$
I_h := \int_{B(\mathbf 0,L)} \gamma^3 |\nabla \varpi|^2 .
$$
Indeed, there holds:
\begin{eqnarray*}
I_h &=& \int_{B(\mathbf 0,L)} \hat{\gamma}^3 |\nabla \hat{\varpi}|^2 +  \int_{B(\mathbf 0,L)} [\gamma^3 -\hat{\gamma}^3] |\nabla \hat{\varpi}|^2 \\
&& + \int_{B(\mathbf 0,L)} \gamma^3 |\nabla D[\varpi]|^2 + 2 \int_{B(\mathbf 0,L)} \gamma^3 \nabla D[\varpi] : \nabla \varpi\\
&=&  \int_{B(\mathbf 0,L)} \hat{\gamma}^3 |\nabla \hat{\varpi}|^2+ J_1 + J_2 + J_3 . 
\end{eqnarray*}
As previously, we bound $|\gamma^3 - \hat{\gamma}^3| \leq K \hat{\gamma}^4 $ so that
recalling \eqref{eq_asymptotpichapeau} with $n=4$ we have:
$$
|J_1| \leq  K \int_{B(\mathbf 0,L)}\hat{\gamma}^4  |\nabla \hat{\varpi}|^2 = O(1).
$$
Then, \eqref{eq_estimDvarpifin} implies:
$$
|J_2| \leq K \int_{B(\mathbf 0,L)} \gamma^2 |\nabla D[\varpi]|^2   = O(1).
$$
We conclude with similar arguments and applying {Proposition \ref{prop_reg2}} in case $n=1$ (here $\varpi$
is a solution to \eqref{eq_papp1bis}-\eqref{eq_papp2bis} with a special $\vv{v}^*$ and  $w^*=0$):
\begin{eqnarray*}
|J_3 | &= & \left| \int_{B(\mathbf 0,L)} \gamma^3 \nabla D[\varpi] : \nabla \varpi\right| \\
 	&\leq &  \int_{B(\mathbf 0,L)} \gamma^2 |\nabla D[\varpi]|^2 +  \int_{B(\mathbf 0,L)} \gamma^4 |\nabla \varpi|^2  =  O(1).
\end{eqnarray*}
Hence, there holds:
\begin{eqnarray*}
I_h & =& \int_{B(\mathbf 0,L)} \hat{\gamma}^3 |\nabla \hat{\varpi}|^2 + O(1) = \dfrac{288 \pi R_1^3}{5} |\ln(h)| + O(1)\,,
\end{eqnarray*}
because of \eqref{eq_asymptotpichapeau}. This ends the proof of Proposition \ref{prop_casf1}.
\end{proof}

\section{Proof of {\bf Theorem \ref{thm_mainexact}}} \label{sec_Takfarinas}

In this section, we complete the proof of {Theorem \ref{thm_mainexact}}.  We first introduce some notations:
\begin{itemize}
\item $\chi_L$ is a truncation function which vanishes outside $B(\mathbf 0,L).$ Namely, we set
$$
\chi_L(x,y) = \chi \left( \dfrac{|(x,y)|}{L}\right)\,,  \quad \forall \, (x,y)  \in \mathbb R^2\,,
$$
with $\chi \in C^{\infty}_c(\mathbb R ; [0,1])$ satisfying 
$$
\chi(t) = 1 \,, \; \forall \, t \in [-1/2,1/2] \,, \quad  \quad
\chi(t) = 0 \,, \; \forall \, t \in \mathbb R \setminus [-1,1] \,. 
$$
\item we recall, that given $\ell >0,$ we denote:
$$
\mathcal{G}_{\ell} = \{(x,y,z) \in \mathbb R^3\, \text{ s.t .}\, |(x,y)| < \ell \text{ and } z \in (\gamma_b(x,y),h+\gamma_t(x,y))\}\,,
$$
\item $\Omega^*$ is a smooth subdomain of $\Omega \setminus \mathcal G_{L/4}$ which does not depend on $h$ and satisfies:
$$
\Omega^* \subset \mathcal F \text{ whatever $h \in (0,1]$}\,,  \qquad \partial \Omega \setminus \partial \mathcal G_{L/2} \subset  \partial \Omega^*.
$$
We note that such an $\Omega^*$ exists as, thanks to assumptions (A2) and (A3), there exists 
$\tilde{\delta} > 0$ depending on $\delta$ and $C_{ell}$ such that, whatever the value of $h \in (0,1]$ there holds 
$$
\{ (x,y,z) \in  \Omega \, \text{ s.t. } {\rm dist}((x,y,z),\partial \Omega) < \tilde{\delta}\} \setminus \mathcal G_{L/4}  \subset \mathcal F\,.
$$ 
\end{itemize}

\medskip

\subsection{Construction of asymptotic approximation} \label{sec_asvf}
Let fix one boundary condition $\vv{u}^* \in  C^{\infty}(\partial \Omega)$ satisfying \eqref{eq_noflux}. At this step,  we explain how to construct a velocity-field $\mathbf v[\mathbf{u}^*]$ which shall approximate the solution to the Stokes problem with boundary conditions 
$\vv{u}^* \in  C^{\infty}(\partial \Omega)$ in the regime $h << 1.$

\medskip

Let split $\vv{u}^*$ into its tangential and normal parts (according to the tangent space in the origin):
$$
\vv{u}^* = \vv{u}_{\sslash}^{*} + u^*_{\bot} \vv{e}_z \in C^{\infty}(\partial \Omega).
$$ 
We introduce ${q}_{in}$ the unique solution to 
\begin{equation} \label{eq_pourq}
\begin{array}{rcll}
-\dfrac{1}{12} {\rm div}(\gamma^3 \nabla q_{in}) &=&  u^*_{\bot} - \dfrac{1}{2}{\rm div}  (\gamma_t + \gamma_b )  \vv{u}^*_{\sslash} - \dfrac{1}{2}(h-2\gamma_b){\rm div} \vv{u}_{\sslash}^{*}  \,, & \text{ on $B(\mathbf 0,L)\,,$} \\
q_{in} &=& 0 \,, & \text{ on $\partial B(\mathbf 0,L)$}\,. 
\end{array}
\end{equation}
We recall that the data $\gamma_t,\gamma_b$  and $\vv{u}^*$ are smooth so that we have existence of a unique $q_{in}$ solution to this equation
satisfying:
\begin{equation}
q_{in} \in C^{\infty}(B(\mathbf 0,L)). \label{eq_regtp}
\end{equation}
Second, we construct an auxiliary pair velocity-field/pressure in the aperture domain. 
We denote:
\begin{equation} \label{eq_ptilde}
{p}_{in}(x,y) =  \chi_L(x,y) q_{in} (x,y) \,, \quad \forall \, (x,y,z) \in \mathcal G_{L}\,.
\end{equation}
and
\begin{eqnarray} \label{def_tvpar}
{\vv{v}}_{in,\, \sslash\!}({x},{y},{z})  &=&  \dfrac{1}{2} ( z - (h+ {\gamma}_t))( z - {\gamma}_b) \nabla_{x,y\,}  p_{in}  + \left(\dfrac{h+ \gamma_t - z}{\gamma } \right) \chi_L  \vv{u}^*_{\,\sslash} \,, \\[4pt] \label{def_tvort0}
{{v}}_{in,\bot}({x},{y},{z}) &=&  \dfrac 12 {\rm div}_{x,y}  \left[ \int^{h+\gamma_t}_{z} (s-(h+\gamma_t))(s-\gamma_b)\nabla_{x,y\,}  p_{in} \, {\rm d}s\right]   \\
&& + \,  \int^{h+\gamma_t}_{z} {\rm div}_{x,y} \left[ \left(\dfrac{h+\gamma_t-s}{\gamma } \right)\chi_L\vv{u}^*_{\,\sslash}  \right] {\rm d}s\,,   \notag
\end{eqnarray}
for $(x,y,z) \in \mathcal G_{L}.$ We note that this velocity-field vanishes outside $\mathcal G_{L}$. We keep notations to denote its trivial extension to $\mathcal F$ in what follows.

\medskip

Combining \eqref{eq_regtp} with \eqref{eq_pourq}-\eqref{def_tvpar}-\eqref{def_tvort0}, we obtain that:
\begin{itemize} 
\item[(VR1)] $\vv{v}_{in} \in C^{\infty}(\overline{\mathcal F})$ and has support in $\mathcal G_{L},$\\[-10pt]
\item[(VR2)] ${\rm div} \ \vv{v}_{in} = 0$ on $\mathcal F,$\\[-10pt]
\item[(VR3)] $\vv{v}_{in}  =  0  \text{ on $\partial \mathcal B$}\,, $\\[-10pt]
\item[(VR4)] $\vv{v}_{in} = \vv{u}^* \text{ on $\partial \Omega \cap \partial \mathcal G_{L/2}$  }  $\\[-10pt]\end{itemize}

All properties satisfied by $\vv{v}_{in}$ are obvious but :
$$
v_{in,\bot}(x,y,\gamma_b(x,y)) = u_{\bot}^*(x,y)\,, \quad \forall \, (x,y) \in B(\mathbf 0,L/2).
$$
This property is a consequence of $q_{in}$ solution to \eqref{eq_pourq}. 
We warn the reader that, to check this implication, one cannot commute the divergence and integral operators prior to taking the trace on $z =\gamma_b(x,y).$
Indeed, the differential operator ${\rm div}_{x,y}$ does not include only derivatives parallel to the boundary $z=\gamma_b(x,y)$ so that one
has to expand the differential operator ${\rm div}_{x,y}$ inside the integral to perform this computation.

\medskip

The internal velocity-field $\vv{v}_{in}$ does not match the boundary condition $\vv{u}^*$ outside $\mathcal G_{L/2}$
(unless $\vv{u}^*$ vanishes). As we expect no singular behavior of the exact solution to the Stokes problem far from the singularity, we extend the approximation in a simple way outside $\mathcal G_{L/2}$. 
Precisely, we define  
$$
\vv{v}_{ext}^*= \vv{u}^* - \vv{v}_{in}{|_{\partial \Omega}} \quad \text{ on $\partial \Omega.$}
$$
As $\vv{v}_{in} \in C^{\infty}(\overline{\mathcal F})$ and ${\vv{u}}^*$ is smooth, we have
that $\vv{v}_{ext}^* \in C^{\infty}(\partial \Omega).$ Moreover,
 the properties of the boundary values of $\vv{v}_{in}$ that we mentioned above ensure that $\vv{v}_{ext}^*$ vanishes on $\partial \Omega \cap \partial \mathcal G_{L/2}.$ Consequently, we might extend $\vv{v}_{ext}^*$ by $0$ to $\partial  \Omega^*,$ defining in this way a smooth function. Then, we denote $(\vv{v}_{ext},q_{ext})$ the unique solution to:
\begin{eqnarray}
\label{eq_stokesext1}\Delta \vv{v}_{ext} - \nabla q_{ext} &=& 0\,, \phantom{\vv{v}^*_{ext}}  \text{ on $\Omega^*$}\,,\\
\label{eq_stokesext2} \nabla \cdot \vv{v}_{ext} &=&0\,,\phantom{\vv{v}^*_{ext}} \text{  on $\Omega^*$}\,, \\
\label{eq_stokesext3} \vv{v}_{ext} &= &\vv{v}^*_{ext} \,, \phantom{0} \text{ on $\partial \Omega^*$}\,.
\end{eqnarray}
This solution indeed exists as:
\begin{eqnarray*}
\int_{\partial \Omega^*} \vv{v}_{ext}^* \cdot \vv{n}^* {\rm d}\sigma& = &\int_{\partial \Omega} \vv{v}_{ext}^* \cdot \vv{n} {\rm d}\sigma \, \\
&= & \int_{\partial \Omega} \vv{u}^*  \cdot \vv{n}{\rm d}\sigma - \int_{\partial \Omega}{\vv{v}}_{in}   \cdot \vv{n}{\rm d}\sigma \, \\
&=& - \int_{\partial \mathcal F} {{\vv{v}}}_{in} \cdot \vv{n} {\rm d} \sigma  =0\,.
\end{eqnarray*}
As $\partial \Omega^*$ and $\vv{v}_{ext}^*$ are smooth we also have that $\vv{v}_{ext}$ 
is smooth on $\overline{\Omega^*}$ and vanishes on $\partial \Omega^* \cap \partial [\mathcal F \setminus \Omega^*].$ Hence, the trivial extension of $\vv{v}_{ext}$   to $\mathcal F$
(that we still denote $\vv{v}_{ext}$ for simplicity) satisfies:
\begin{itemize} 
\item[(VR5)] ${\vv{v}}_{ext}$ is continuous piecewise smooth and has support in $\Omega^*$
\item[(VR6)] ${\rm div} \ {\vv{v}}_{ext} = 0$ on $\mathcal F$
\item[(VR7)] ${\vv{v}}_{ext}  =  0  \text{ on $\partial \mathcal B$}\,,$ 
\item[(VR8)] ${\vv{v}}_{ext} = \vv{u}^* - {\vv{v}}_{in} \text{ on $\partial \Omega$} $ 
\end{itemize}
Our asymptotic approximation $\vv{v}[\vv{u}^*]$ reads then 
\begin{equation} 
\vv{v}[\vv{u}^*] = {\vv{v}}_{in} + \vv{v}_{ext}.
\end{equation}

\medskip

Even though the vector-field $\vv{v}_{ext}$ is important in order to obtain a velocity-field $\vv{v}[\vv{u}^*]$
that matches the boundary conditions $\vv{u}^*$ on the whole $\partial \Omega,$  this external velocity-field does not contain any information on the way the velocity-field $\vv{v}[\vv{u}^*]$ diverges when $h$ is small. Indeed, we have the following lemma:
\begin{lemma} \label{lem_vext}
There exists a constant $K$ depending on $C^{reg}_2,C_{ell},\partial \Omega$ and $\delta$ (introduced in (A3)) such that:
$$
\int_{\mathcal F} |\nabla \vv{v}_{ext}|^2 \leq K\left[  \|\vv{u}^* ; H^2(B(\mathbf 0 ,L))\|+ \|\vv{u}^* ; H^\frac 12(\partial \Omega)\| \right]^2\, .
$$
\end{lemma}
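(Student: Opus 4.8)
The plan is to reduce the estimate to the classical well-posedness theory for the Stokes Dirichlet problem on the \emph{fixed} bounded smooth domain $\Omega^*$, and then to control the boundary datum $\vv{v}^*_{ext} = \vv{u}^* - \vv{v}_{in}|_{\partial\Omega}$ in $H^{\frac12}(\partial\Omega^*)$ uniformly with respect to $h$. Since $\Omega^*$ does not depend on $h$ and its shape and regularity depend only on $\partial\Omega$, $\delta$ and $C_{ell}$ (the latter through $\tilde\delta$), the standard $H^1$-estimate for \eqref{eq_stokesext1}--\eqref{eq_stokesext3} (see \cite{Galdibooknew}) provides a constant $C(\Omega^*)$ for which
$$
\int_{\mathcal F}|\nabla \vv{v}_{ext}|^2 = \int_{\Omega^*}|\nabla \vv{v}_{ext}|^2 \le C(\Omega^*)\,\|\vv{v}^*_{ext}\,;\,H^{\frac12}(\partial\Omega^*)\|^2 .
$$
Because $\vv{v}^*_{ext}$ vanishes on $\partial\Omega\cap\partial\mathcal G_{L/2}$, hence near the junction between $\partial\Omega$ and the rest of $\partial\Omega^*$, the extension by $0$ is bounded on $H^{\frac12}$, and it remains to estimate $\|\vv{v}^*_{ext}\,;\,H^{\frac12}(\partial\Omega)\| \le \|\vv{u}^*\,;\,H^{\frac12}(\partial\Omega)\| + \|\vv{v}_{in}|_{\partial\Omega}\,;\,H^{\frac12}(\partial\Omega)\|$, the first summand being part of the data.

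To bound $\|\vv{v}_{in}|_{\partial\Omega}\,;\,H^{\frac12}(\partial\Omega)\|$ uniformly in $h$, I would compute this trace explicitly from \eqref{def_tvpar}--\eqref{def_tvort0}. Setting $z=\gamma_b(x,y)$ makes the factor $(z-\gamma_b)$ vanish, so the tangential part reduces to $\vv{v}_{in,\sslash}|_{\partial\Omega} = \chi_L\,\vv{u}^*_{\sslash}$, whose $H^{\frac12}(\partial\Omega)$-norm is controlled by $\|\vv{u}^*\,;\,H^{\frac12}(\partial\Omega)\|$ since multiplication by the fixed smooth cut-off $\chi_L$ is bounded on $H^{\frac12}$. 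The normal part $g(x,y):=v_{in,\bot}(x,y,\gamma_b(x,y))$ is supported in $\{|(x,y)|<L\}$: on $\{|(x,y)|<L/2\}$ the defining equation \eqref{eq_pourq} for $q_{in}$ gives exactly $g = u_\bot^*$, while on the annulus $\mathcal A := \{L/2 \le |(x,y)| < L\}$, using $p_{in}=\chi_Lq_{in}$ and expanding the operators ${\rm div}_{x,y}$ in \eqref{def_tvort0}, $g$ becomes a linear combination — with coefficients built from $\chi_L$, $\gamma_t$, $\gamma_b$ and $1/\gamma$ — of $u_\bot^*$, $\nabla_{x,y}\vv{u}^*_{\sslash}$, $\nabla_{x,y}q_{in}$ and $\nabla^2_{x,y}q_{in}$ (the second derivatives of $q_{in}$ arising from ${\rm div}_{x,y}[(s-(h+\gamma_t))(s-\gamma_b)\nabla_{x,y}p_{in}]$, with no boundary term from differentiating the $s$-integral since that polynomial weight vanishes at both endpoints). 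On $\mathcal A$ one has $\gamma \ge h + C_{ell}(L/2)^2 \ge C_{ell}L^2/4$, so these coefficients and their derivatives are bounded uniformly in $h\in(0,1]$, and therefore $\|g\,;\,H^{\frac12}(\partial\Omega)\| \le K\big[\|\vv{u}^*\,;\,H^{\frac12}(\partial\Omega)\| + \|\vv{u}^*\,;\,H^{2}(B(\mathbf 0,L))\| + \|q_{in}\,;\,H^{\frac52}(\mathcal A)\|\big]$.

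The last term I would bound using Proposition \ref{prop_unifreg}: $q_{in}$ solves \eqref{eq_papp1bis}--\eqref{eq_papp2bis} with $\vv{v}^*=\vv{u}^*_{\sslash}$ and $w^* = u_\bot^* - \tfrac12(h-2\gamma_b){\rm div}\,\vv{u}^*_{\sslash}$, so Proposition \ref{prop_unifreg} with $k=1$ and $\varepsilon=L/4$ gives $\|q_{in}\,;\,H^{3}(B(\mathbf 0,L)\setminus B(\mathbf 0,L/4))\| \le K^{reg}_1\big[\|\vv{u}^*_{\sslash}\,;\,H^2(B(\mathbf 0,L))\| + \|w^*\,;\,H^1(B(\mathbf 0,L))\|\big]$; since $|h|\le1$ one has $\|w^*\,;\,H^1(B(\mathbf 0,L))\| \le K[C^{reg}_2]\,\|\vv{u}^*\,;\,H^2(B(\mathbf 0,L))\|$, and then $\mathcal A \subset B(\mathbf 0,L)\setminus B(\mathbf 0,L/4)$ together with $H^{3}\hookrightarrow H^{\frac52}$ yields $\|q_{in}\,;\,H^{\frac52}(\mathcal A)\| \le K\,\|\vv{u}^*\,;\,H^2(B(\mathbf 0,L))\|$ with $K$ depending only on $L,C_{ell},C_{cvx},C^{reg}_2$. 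Combining the three steps gives the lemma. The one step where something genuinely has to be checked is the appearance of second-order derivatives of the lubrication pressure in the trace $v_{in,\bot}|_{\partial\Omega}$: these are harmless precisely because they only occur on the annulus $\mathcal A$, \emph{away} from the degeneracy point $(0,0)$, where Proposition \ref{prop_unifreg} delivers $h$-uniform control in terms of $\|\vv{u}^*\,;\,H^2(B(\mathbf 0,L))\|$ alone — the remaining ingredients (properties of $\Omega^*$, trace theorems, boundedness of multiplication by $\chi_L$) are routine.
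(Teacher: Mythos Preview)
Your argument is correct and follows the same overall strategy as the paper: reduce to the Stokes estimate on the fixed domain $\Omega^*$, then control the boundary datum $\vv v^*_{ext}$ by invoking Proposition~\ref{prop_unifreg} to bound $q_{in}$ in $H^3$ on the annulus away from the origin. The execution differs in two minor respects. First, the paper bounds the $H^{1/2}(\partial\Omega^*)$-norm of $\vv v^*_{ext}$ by introducing a cutoff $\zeta$ and lifting $\vv u^*$ and $\vv v_{in}$ to $H^1(\Omega^*)$, then estimating $\|\vv v_{in};H^1(\Omega^*)\|$ directly in the bulk; you instead compute the trace $\vv v_{in}|_{\partial\Omega}$ explicitly and estimate it in $H^{1/2}$. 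Both routes require the same $H^3$-control on $q_{in}$ over the annulus, so neither is cheaper. Second, to cope with the $h$-dependence of the source term in \eqref{eq_pourq}, the paper splits $q_{in}=q_{in}^{(1)}+h\,q_{in}^{(2)}$ and applies Proposition~\ref{prop_unifreg} to each piece; your observation that simply $|h|\le 1$ already gives $\|w^*;H^1(B(\mathbf 0,L))\|\le K[C^{reg}_2]\,\|\vv u^*;H^2(B(\mathbf 0,L))\|$ is correct and makes the splitting superfluous here (the paper reuses the splitting later in contexts where the vanishing of $w^*(\mathbf 0)$ matters, which may explain its introduction at this stage).
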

\begin{proof}
Indeed, as solution to the Stokes problem on $\Omega^*,$ the exterior velocity-field $\vv{v}_{ext}$ satisfies
\begin{eqnarray*}
\int_{\mathcal F} |\nabla \vv{v}_{ext}|^2 &=&\int_{\Omega^*} |\nabla \vv{v}_{ext}|^2  \\
&\leq & C^{*} \| \vv{v}_{ext}^* ; H^{\frac 12}(\partial \Omega^*)\|^2.
\end{eqnarray*}
where the constant $C^*$ depends only on $\Omega^*$ which depends itself on the geometry of $\partial \Omega$ away from the singularity and $\delta$. Moreover, recalling that $\vv{v}_{ext}^*$ vanishes in $\mathcal G_{L/2},$ we might construct a function $\zeta \in C^{\infty}(\overline{\Omega^*})$ such that $\zeta=1$ on $Supp(\vv{v}_{ext}^*)$ and $\zeta= 0$
on $\partial \Omega^* \setminus \partial \Omega.$ We have then, introducing $\Gamma[\vv{u}^*] \in H^1(\Omega)$ a lifting of $\vv{u}^*$ on the whole $\Omega$: 
\begin{eqnarray*}
\| \vv{v}_{ext}^* ; H^{\frac 12}(\partial \Omega^*)\| &\leq& \| \zeta \vv{u}^* ; H^{\frac 12}(\partial \Omega^*)\| +  \| \zeta {\vv{v}}_{in} ; H^{\frac 12}(\partial \Omega^*) \| \, \\
&\leq& 
C^* \| \zeta \Gamma[\vv{u}^*] ; H^{1}(\Omega^*) \| +  \|\zeta {\vv{v}} _{in}; H^{1}(\Omega^*)\|\, \\
&\leq & 
C^*_{\zeta} \left( \| \vv{u}^* ; H^{\frac 12}(\partial \Omega) \| +  \|{\vv{v}}_{in} ; H^1(\Omega^*)\|\right)\,. 
\end{eqnarray*}
As $\Omega^*$ remains away from the singularity, we might bound from below $\gamma$ by a constant $\hat{\delta}$ depending only on $C_{ell}$ on $\mathcal{G}_{L} \cap \Omega^*$ and 
apply the explicit formulas for ${\vv{v}}_{in}$ to obtain the following bounds:
\begin{multline*}
\|{\vv{v}}_{in} ; H^1(\Omega^*)\|  
\leq C^* \left( 1 + \dfrac{1}{\hat{\delta}^3}\right) \left( \|\gamma_b ; C^2(\overline{B(\mathbf 0,L)}) \| + \|\gamma_t ; C^2(\overline{B(\mathbf 0,L)})\|\right) \times \dots \\
 \ldots \times \left( \|{q}_{in} ; H^3(B(\mathbf 0,L) \setminus B(\mathbf 0,L/2))\| + \|\vv{u}^* ; H^2(B(\mathbf 0,L))\|\right). 
\end{multline*}
{We now wish to apply {Proposition \ref{prop_unifreg}} to ${q}_{in}$ in order to yield a constant that bounds its $H^3$-norm by something which does not depend on $h.$ However, one term in the right-hand side of  \eqref{eq_pourq} depends on $h.$ To avoid this difficulty we expand 
$q_{in} = q_{in}^{(1)}+ hq_{in}^{(2)}$ with pressure field components defined  as respective solutions to:
\begin{equation} \label{eq_pourq1}
\begin{array}{rcll}
-\dfrac{1}{12} {\rm div}(\gamma^3 \nabla q^{(1)}_{in}) &=&  (u^*_{\bot} + \gamma_b{\rm div} \vv{u}_{\sslash}^{*} ) - \dfrac{1}{2}{\rm div}  (\gamma_t + \gamma_b )  \vv{u}^*_{\sslash}   \,, & \text{ on $B(\mathbf 0,L)\,,$} \\
q^{(1)}_{in} &=& 0 \,, & \text{ on $\partial B(\mathbf 0,L)$}\,. 
\end{array}
\end{equation}
and
\begin{equation} \label{eq_pourq2}
\begin{array}{rcll}
-\dfrac{1}{12} {\rm div}(\gamma^3 \nabla q^{(2)}_{in}) &=& -\dfrac 12 {\rm div} \vv{u}_{\sslash}^{*}  \,, & \text{ on $B(\mathbf 0,L)\,,$} \\
q^{(2)}_{in} &=& 0 \,, & \text{ on $\partial B(\mathbf 0,L)$}\,. 
\end{array}
\end{equation}
In particular, we remark that $q^{(1)}_{in}$ is a solution to \eqref{eq_papp1bis}-\eqref{eq_papp2bis} with a source term $f$ given by \eqref{eq_formf} associated with
$$
w_{\bot}^{*} = u_{\bot}^{*} + \gamma_b {\rm div} \vv{u}_{\sslash}^*  \qquad \vv{v}^* = \vv{u}_{\sslash}^*\,,
$$
and that $q^{(2)}_{in}$is a solution to \eqref{eq_papp1bis}-\eqref{eq_papp2bis} with a source term $f$ given by \eqref{eq_formf} associated with
$$
w_{\bot}^{*} = -\dfrac 12 {\rm div} \vv{u}_{\sslash}^*  \qquad \vv{v}^* = 0.
$$
Hence, we apply {Proposition \ref{prop_unifreg}}  to both  $q^{(1)}_{in}$ and $q^{(2)}_{in}$ and obtain:
$$
 \|{q}^{(1)}_{in} ; H^3(B(\mathbf 0,L) \setminus B(\mathbf 0,L/2))\| + h \|{q}^{(2)}_{in} ; H^3(B(\mathbf 0,L) \setminus B(\mathbf 0,L/2))\| \leq K^{1}_{reg}  \| \vv{u}^* ; H^2(B(\mathbf 0,L))\|. 
$$
This entails finally:
\begin{multline} \label{eq_restevstar}
\|{\vv{v}}_{in} ; H^1(\Omega^*)\|  
\leq C^* \left( 1 + \dfrac{1}{\hat{\delta}^3}\right) \left( \|\gamma_b ; C^2(\overline{B(\mathbf 0,L)}) \| + \|\gamma_t ; C^2(\overline{B(\mathbf 0,L)})\|\right) \times \dots \\
 \ldots \times  K^{reg}_{1}  \| \vv{u}^* ; H^2(B(\mathbf 0,L))\| .
\end{multline}
}
\end{proof}

\subsection{Main steps of the proof of Theorem \ref{thm_mainexact}}
We fix now $\vv{u}^* \in C^{\infty}(\partial \Omega)$ satisfying \eqref{eq_noflux} as in the assumptions of Theorem \ref{thm_mainexact}.
We split this boundary condition in 
$$
\vv{u}^* = \vv{u}^*_{as} + \vv{u}^*_R
$$
with:
\begin{eqnarray*}
\vv{u}^*_{as} &=& u^*_{\bot}(\mathbf 0)\vv{e}_z +  \vv{u}^*_{\sslash}(\mathbf 0) +( x \partial_x u^*_{\bot}(\mathbf 0) + y \partial_y u^*_{\bot}(\mathbf 0)) \vv{e}_z \,,\\
\vv{u}^*_R &=& \vv{u}^* - \vv{u}^*_{as} \,.
\end{eqnarray*}
Note that, straightforward computations entail:
$$
\int_{\partial \Omega} \vv{u}^*_i \cdot \vv{n} {\rm d}\sigma  =0, \qquad \vv{u}^*_i \in C^{\infty}(\partial \Omega),  \quad \forall \, i \in \{as,R\}\,.
$$
So, we might define the solutions of Stokes system \stokes\  as given by {Theorem \ref{thm_galdi}} for boundary 
data $\vv{u}^*,\vv{u}^*_{as},\vv{u}^*_R.$ We denote these solutions $(\vv{u},p)$, $(\vv{u}_{as},p_{as})$, $(\vv{u}_R,p_R)$
respectively. We also introduce the asymptotic approximations:
$$
\vv{v}_{as} = \vv{v}[\vv{u}^*_{as}] \quad  \vv{v}_{R} = \vv{v}[\vv{u}^*_{R}]. 
$$

Let denote $\vv{v} = \vv{v}_{as}.$ Then, as $\vv{u}^{*}_{as,\,\sslash}$ is constant,
we have that 
$$
(h-2\gamma_b) {\rm div} {\vv{u}^*_{as,\, \sslash}} = 0
$$
so that the pressure associated with $\vv{v}_{as}$ satisfies \eqref{eq_q}-\eqref{cab_q} and, consequently, 
$\vv{v}_{as}$ satisfies \eqref{eq_vx}-\eqref{eq_vy} in $\mathcal G_{L/2}.$ To complete the proof, 
it remains to compute 
$$
\|\vv{u}- \vv{v} ; V\| = \|\vv{u}_{as} + \vv{u}_{R} - \vv{v}_{as} ; V\| \leq \|\vv{v}_{as} - \vv{u}_{as} ; V\| + \|\vv{u}_R ; V \|. 
$$
Next subsection is devoted to the computation of $ \|\vv{u}_R;V\|$ and the following one to $\|\vv{v}_{as} - \vv{u}_{as};V\|.$
In particular, the proof of Theorem \ref{thm_mainexact} is completed by applying Proposition \ref{prop_uR} and Proposition
\ref{prop_vas} in the general case and by applying Proposition \ref{prop_uR}, Proposition \ref{prop_v1} and Proposition \ref{prop_v0}
in the radial case.

\subsection{Asymptotics of $\vv{u}_R$}
We start with the remainder term $\mathbf u_R.$ Keeping notation $\vv{v}_R$ for $\vv{v}[\mathbf{u}_R^*],$ we prove: 
\begin{proposition} \label{prop_uR}
There exists a constant $K$ depending on $\partial \Omega,$ $\delta,$ $C^{reg}_{3},$ $C_{cvx},$ $C_{ell}$ and $L$ such that, if $h \in (0,1]$, there holds:
$$
 \int_{\mathcal F} |\nabla \vv{u}_R|^2 \leq  K \left[ \|\vv{u}^* ; H^{3}(B(\mathbf 0,L))\|^2 +\|\vv{u}^* ; H^{\frac 12}(\partial \Omega)\|^2  \right].
$$
\end{proposition}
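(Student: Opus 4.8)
The plan is to bound $\|\vv{u}_R;V\|$ directly by the Dirichlet energy of the explicit admissible field $\vv{v}_R:=\vv{v}[\vv{u}_R^*]$ constructed in Section~\ref{sec_asvf}. By properties (VR1)--(VR8) one has $\vv{v}_R\in V[\vv{u}_R^*]$, so Proposition~\ref{prop_varcar} applied to the datum $\vv{u}_R^*$ yields
\begin{equation*}
\|\vv{u}_R;V\|^2\ \le\ \int_{\mathcal F}|\nabla\vv{v}_R|^2\ \le\ 2\int_{\mathcal F}|\nabla\vv{v}_{in}|^2+2\int_{\mathcal F}|\nabla\vv{v}_{ext}|^2 .
\end{equation*}
The exterior term is estimated by Lemma~\ref{lem_vext}; since $\vv{u}^*_{as}$ is the first-order Taylor polynomial of $\vv{u}^*$ at the origin, its coefficients $\vv{u}^*(\mathbf 0)$ and $\nabla_{x,y}u_\bot^*(\mathbf 0)$ are controlled by $\|\vv{u}^*;H^3(B(\mathbf 0,L))\|$ through the two-dimensional embedding $H^3\hookrightarrow C^1$, whence $\|\vv{u}_R^*;H^2(B(\mathbf 0,L))\|+\|\vv{u}_R^*;H^{\frac12}(\partial\Omega)\|\lesssim\|\vv{u}^*;H^3(B(\mathbf 0,L))\|+\|\vv{u}^*;H^{\frac12}(\partial\Omega)\|$, which is exactly the right-hand side we want.

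The heart of the matter is a uniform-in-$h$ bound for $\int_{\mathcal F}|\nabla\vv{v}_{in}|^2=\int_{\mathcal G_L}|\nabla\vv{v}_{in}|^2$. The key structural fact is that $\vv{u}_R^*$ is favorable at the contact point: $\vv{u}_{R,\sslash}^*(\mathbf 0)=\mathbf 0$, $u^*_{R,\bot}(\mathbf 0)=0$ and $\nabla_{x,y}u^*_{R,\bot}(\mathbf 0)=\mathbf 0$. I would split the associated pressure, which solves \eqref{eq_pourq}, as $q_{in}=q_{in}^{(1)}+h\,q_{in}^{(2)}$ exactly as in the proof of Lemma~\ref{lem_vext}, with $q_{in}^{(1)},q_{in}^{(2)}$ solving \eqref{eq_pourq1}--\eqref{eq_pourq2}. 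Using $\gamma_b(\mathbf 0)=0$ and $\nabla\gamma_b(\mathbf 0)=\mathbf 0$, the data of $q_{in}^{(1)}$ (namely $\vv{v}^*=\vv{u}_{R,\sslash}^*$ and $w^*=u^*_{R,\bot}+\gamma_b\,{\rm div}\,\vv{u}_{R,\sslash}^*$) satisfy \eqref{ass_favorable}, so Proposition~\ref{prop_favorable} together with Proposition~\ref{prop_reg} control all the weighted norms $\int_{B(\mathbf 0,L)}\gamma^{\frac52+n}|\nabla q_{in}^{(1)}|^2$ and $\int_{B(\mathbf 0,L)}\gamma^{3+n}|\nabla^{k+1}q_{in}^{(1)}|^2$ ($k\le 2$, $n>k$) uniformly in $h$ by $\|\vv{u}_R^*;H^3(B(\mathbf 0,L))\|$. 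For $q_{in}^{(2)}$, whose datum $w^*=-\tfrac12{\rm div}\,\vv{u}_{R,\sslash}^*$ need not vanish at $\mathbf 0$, Proposition~\ref{prop_reg2} (with Proposition~\ref{prop_reg}) only gives $\int_{B(\mathbf 0,L)}\gamma^{3}|\nabla q_{in}^{(2)}|^2\lesssim h^{-1}$ and $\int_{B(\mathbf 0,L)}\gamma^{4}|\nabla q_{in}^{(2)}|^2\lesssim|\ln h|$ (the higher weights being bounded); but these are always multiplied by the prefactor $h^2$ coming from $h\,q_{in}^{(2)}$, and $h^2 h^{-1}=h\le 1$, $h^2|\ln h|=O(1)$ on $(0,1]$, so $q_{in}^{(2)}$ contributes an $O(1)$ amount as well.

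With these estimates in hand I would insert $p_{in}=\chi_L q_{in}$ and $\chi_L\vv{u}_{R,\sslash}^*$ into the explicit formulas \eqref{def_tvpar}--\eqref{def_tvort0} and bound $\int_{\mathcal G_L}|\nabla\vv{v}_{in}|^2$ termwise. On $\mathcal G_L$ one has $|z-(h+\gamma_t)|\le\gamma$, $|z-\gamma_b|\le\gamma$, every $z$-integration gains and every $z$-derivative loses one power of $\gamma$, and $|\nabla\gamma_t|+|\nabla\gamma_b|\lesssim\gamma^{1/2}$, $\gamma_t+\gamma_b\lesssim\gamma$ by \eqref{eq_CL}--\eqref{eq_boundtbg}; a routine bookkeeping then reduces $\int_{\mathcal G_L}|\nabla\vv{v}_{in}|^2$ to a finite sum of the weighted integrals $\int_{B(\mathbf 0,L)}\gamma^{2k+1}|\nabla^k p_{in}|^2$ ($k\le 3$), already controlled above, plus terms of the form $\int_{B(\mathbf 0,L)}\gamma^{-1}|\chi_L\vv{u}_{R,\sslash}^*|^2$ and $\int_{B(\mathbf 0,L)}\gamma^{m}|\nabla^j(\chi_L\vv{u}_{R,\sslash}^*)|^2$ with $m\ge 0$. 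Only the first genuinely needs $\vv{u}_{R,\sslash}^*(\mathbf 0)=\mathbf 0$: since $|\vv{u}_{R,\sslash}^*(x,y)|\lesssim(x^2+y^2)^{3/8}\|\vv{u}_R^*;H^2(B(\mathbf 0,L))\|$ and $\gamma\ge C_{ell}(x^2+y^2)$, passing to polar coordinates bounds it exactly as in the proof of Proposition~\ref{prop_favorable}. Finally, all contributions of $\chi_L$ and its derivatives are supported in $B(\mathbf 0,L)\setminus B(\mathbf 0,L/2)$, where $\gamma$ is bounded below independently of $h$, hence are dominated by $\|q_{in};H^3(B(\mathbf 0,L)\setminus B(\mathbf 0,L/2))\|^2+\|\vv{u}_R^*;H^3(B(\mathbf 0,L))\|^2$, which is $O(1)$ by Proposition~\ref{prop_unifreg} applied to $q_{in}^{(1)}$ and $q_{in}^{(2)}$. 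Combining the three displays completes the proof. The main obstacle is precisely this last bookkeeping: one must verify that every term produced by differentiating \eqref{def_tvpar}--\eqref{def_tvort0} — in particular the boundary contributions generated when $v_{in,\bot}$ is differentiated under its variable upper limit $s=h+\gamma_t(x,y)$ — carries exactly a power of $\gamma$ for which Propositions~\ref{prop_favorable}, \ref{prop_reg2} and \ref{prop_reg} supply a uniform bound, and that the single $h^{-1}$-divergent weighted norm is only ever hit through the $h$-rescaled component $h\,q_{in}^{(2)}$.
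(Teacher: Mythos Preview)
Your proposal is correct and follows essentially the same route as the paper: reduce to bounding $\|\vv{v}_R;V\|$ via Proposition~\ref{prop_varcar}, split $\vv{v}_R=\vv{v}_{in}+\vv{v}_{ext}$, handle $\vv{v}_{ext}$ by Lemma~\ref{lem_vext}, reduce $\int_{\mathcal G_L}|\nabla\vv{v}_{in}|^2$ to the weighted integrals $\int\gamma^{2k+1}|\nabla^k p_{in}|^2$ ($k\le 3$) plus $\int\gamma^{-1}|\vv{u}^*_{R,\sslash}|^2$, decompose $q_{in}=q_{in}^{(1)}+hq_{in}^{(2)}$, and apply Propositions~\ref{prop_favorable}, \ref{prop_reg2}, \ref{prop_reg}, \ref{prop_unifreg} to each piece. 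The only cosmetic difference is that the paper bounds $|\vv{u}^*_{R,\sslash}|$ by $(|x|+|y|)\|\vv{u}^*;H^3\|$ via $H^3\hookrightarrow C^1$ rather than your $(x^2+y^2)^{3/8}$ via $H^2\hookrightarrow C^{0,3/4}$, but either suffices.
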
 
We recall that thanks to the variational characterization {Proposition \ref{prop_varcar}}, $\vv{u}_R$ realizes the minimum of $V$ norms among
divergence-free velocity-fields $\vv{w}$ satisfying the same boundary conditions as $\vv{u}_R$
({\em i.e.} $\vv{w} \in V[\vv{u}_R^*]$ with our notations).
By construction, we have $\vv{v}_R \in V[\vv{u}_R^*]$ so that the above proposition 
is a consequence of the lemma:
\begin{lemma} \label{lem_R}
There exists a constant $K$ depending on on $\partial \Omega,$ $\delta,$ $C^{reg}_{3},$ $C_{cvx},$ $C_{ell}$ and $L$ for which,
if $h \in (0,1]:$
\begin{equation}
\int_{\mathcal F} |\nabla \vv{v}_R|^2 \leq K \left[ \|\vv{u}^* ; H^{3}(B(\mathbf 0,L))\|^2 +\|\vv{u}^* ; H^{\frac 12}(\partial \Omega)\|^2  \right].
\end{equation}
\end{lemma}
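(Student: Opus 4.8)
The plan is to bound separately the two constituents of $\vv{v}_R=\vv{v}[\vv{u}_R^*]=\vv{v}_{in}+\vv{v}_{ext}$ furnished by the construction of Section \ref{sec_asvf}, using $|\nabla\vv{v}_R|^2\leq 2|\nabla\vv{v}_{in}|^2+2|\nabla\vv{v}_{ext}|^2$. First I would reduce the external part to the already proved {Lemma \ref{lem_vext}}, applied with boundary datum $\vv{u}_R^*$: since $\vv{u}_R^*=\vv{u}^*-\vv{u}_{as}^*$ and $\vv{u}_{as}^*$ only involves the values $\vv{u}^*(\mathbf 0)$ and $\nabla_{x,y}u_\bot^*(\mathbf 0)$, the two-dimensional embedding $H^3(B(\mathbf 0,L))\hookrightarrow C^1(\overline{B(\mathbf 0,L)})$ gives $\|\vv{u}_R^*;H^2(B(\mathbf 0,L))\|+\|\vv{u}_R^*;H^{\frac12}(\partial\Omega)\|\leq K(\|\vv{u}^*;H^3(B(\mathbf 0,L))\|+\|\vv{u}^*;H^{\frac12}(\partial\Omega)\|)$, so {Lemma \ref{lem_vext}} bounds $\int_{\mathcal F}|\nabla\vv{v}_{ext}|^2$ as required. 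It then remains to bound $\int_{\mathcal G_L}|\nabla\vv{v}_{in}|^2$, which carries all the analytic content.

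To this end I would differentiate the explicit expressions \eqref{def_tvpar}--\eqref{def_tvort0} for $\vv{v}_{in}$, in which $p_{in}=\chi_Lq_{in}$ with $q_{in}$ solving \eqref{eq_pourq}. Applying Leibniz' rule together with the structural bounds \eqref{eq_CL}--\eqref{eq_boundtbg} — and noting that the boundary terms produced when the endpoint $h+\gamma_t$ of the $z$-integral in \eqref{def_tvort0} is differentiated carry a vanishing factor $s-(h+\gamma_t)$ — one obtains a pointwise estimate for $|\nabla\vv{v}_{in}|$ in terms of powers of $\gamma$ times the derivatives $\nabla^jp_{in}$ ($j=1,2,3$) and $\nabla^j\vv{u}_{R,\sslash}^*$ ($j=0,1$). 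Integrating this estimate first in the $z$-variable, across the aperture whose thickness is comparable to $\gamma$, reduces the bound on $\int_{\mathcal G_L}|\nabla\vv{v}_{in}|^2$ to: first, weighted integrals $\int_{B(\mathbf 0,L)}\gamma^{2j+1}|\nabla^jp_{in}|^2$, $j=1,2,3$; and second, terms involving $\vv{u}_{R,\sslash}^*$ and its first derivative against negative powers of $\gamma$. The latter are finite: since $\vv{u}_{R,\sslash}^*(\mathbf 0)=\mathbf 0$, the embedding $H^2(B(\mathbf 0,L))\hookrightarrow C^{0,3/4}$ yields $|\vv{u}_{R,\sslash}^*(x,y)|\leq C(x^2+y^2)^{3/8}\|\vv{u}^*;H^2(B(\mathbf 0,L))\|$, which combined with $\gamma\geq C_{ell}(x^2+y^2)$ from \eqref{eq_Cell} makes them integrable, bounded by $K\|\vv{u}^*;H^3(B(\mathbf 0,L))\|^2$.

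The heart of the matter is then the control of the weighted pressure integrals by $K\|\vv{u}^*;H^3(B(\mathbf 0,L))\|^2$ uniformly in $h\in(0,1]$, which is where the results of Section \ref{sec_Gilbarg} enter. Away from the origin, on the support of $\nabla\chi_L\subset B(\mathbf 0,L)\setminus B(\mathbf 0,L/2)$, where $\gamma$ is bounded below by a constant depending only on $C_{ell}$, {Proposition \ref{prop_unifreg}} provides an $h$-uniform bound for $\|q_{in};H^3\|$ on that region (after the splitting $q_{in}=q_{in}^{(1)}+hq_{in}^{(2)}$ of \eqref{eq_pourq1}--\eqref{eq_pourq2}, used to remove the $h$-dependent term from the source, exactly as in the proof of {Lemma \ref{lem_vext}}). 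On $B(\mathbf 0,L/2)$, where $p_{in}=q_{in}$, I would exploit the same splitting. The key observation is that $q_{in}^{(1)}$ solves \eqref{eq_papp1bis}--\eqref{eq_papp2bis} with data for which the \emph{favorable} hypothesis \eqref{ass_favorable} holds: by the definition of $\vv{u}_{as}^*$ one has $\vv{u}_{R,\sslash}^*(\mathbf 0)=\mathbf 0$, $u_{R,\bot}^*(\mathbf 0)=0$, $\nabla u_{R,\bot}^*(\mathbf 0)=\mathbf 0$, and by (A1) $\gamma_b(\mathbf 0)=0$, $\nabla\gamma_b(\mathbf 0)=\mathbf 0$, so the corresponding $(\vv{v}^*,w^*)$ vanish to the required order at the origin. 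Hence {Proposition \ref{prop_favorable}}, together with {Proposition \ref{prop_reg}}, bounds all the weighted norms $\int_{B(\mathbf 0,L)}\gamma^{3+n}|\nabla^{k+1}q_{in}^{(1)}|^2$ that we need by $K\|\vv{u}^*;H^3(B(\mathbf 0,L))\|^2$, uniformly in $h$. For $q_{in}^{(2)}$, whose datum need not vanish at the origin, {Proposition \ref{prop_reg2}} and {Proposition \ref{prop_reg}} give the same weighted norms with an extra factor $h^{-1}$ (or $|\ln h|$); since $q_{in}^{(2)}$ enters $p_{in}$ only through $hq_{in}^{(2)}$, the prefactor $h^2$ and the constraint $h\leq1$ convert these into $h$-uniform bounds. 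Assembling the three contributions proves the lemma.

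I expect the main obstacle to be the $\gamma$-weight bookkeeping in the interior estimate: one has to verify that, after integrating the explicit formula for $|\nabla\vv{v}_{in}|^2$ across the thin film, every weighted norm of a derivative of $p_{in}$ that appears lies precisely in the range of exponents covered by Propositions \ref{prop_favorable}--\ref{prop_reg}, and that the only source of an $h$-divergent term — the non-vanishing value $\div\vv{u}_\sslash^*(\mathbf 0)$ — is confined to the $q_{in}^{(2)}$-part, where it is absorbed by the $h^2$ prefactor. This is exactly the mechanism that makes the remainder $\vv{v}_R$, unlike the leading contribution $\vv{v}_{as}$, bounded in $V$ as $h\to0$.
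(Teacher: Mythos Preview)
Your proposal is correct and follows essentially the same approach as the paper: the decomposition $\vv{v}_R=\vv{v}_{in}+\vv{v}_{ext}$, the appeal to Lemma~\ref{lem_vext} for the exterior part, the pointwise differentiation of \eqref{def_tvpar}--\eqref{def_tvort0} leading to the weighted integrals $\int_{B(\mathbf 0,L)}\gamma^{2j+1}|\nabla^j p_{in}|^2$ for $j=1,2,3$, the splitting $q_{in}=q_{in}^{(1)}+hq_{in}^{(2)}$ with the favorable vanishing \eqref{ass_favorable} exploited for $q_{in}^{(1)}$ via Proposition~\ref{prop_favorable}, and the $h^2$-absorption of the $h^{-1}$ loss for $q_{in}^{(2)}$ --- all match the paper's proof. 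Two inessential discrepancies: the explicit computation in the paper produces also a $\gamma|\nabla^2_{x,y}\vv{u}^*_{R,\sslash}|$ term (so $j=0,1,2$ rather than $j=0,1$), and the first-derivative term in fact carries no negative power of $\gamma$; neither affects the argument.
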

\begin{proof}
We drop index $R$ in $\vv{v}$ for the whole proof. We recall that, by construction, we have $\vv{v} = {\vv{v}}_{in} + \vv{v}_{ext}$ where ${\vv{v}}_{in}$ and $\vv{v}_{ext}$ are computed {\em via} 
\eqref{def_tvpar}-\eqref{def_tvort0} and \eqref{eq_stokesext1}-\eqref{eq_stokesext3} respectively.  
 We first apply Lemma \ref{lem_vext} showing that we only have to focus on the following contribution of ${\vv{v}}_{in}:$
$$
\int_{\mathcal F} |\nabla {\vv{v}}_{in}|^2 = \int_{\mathcal G_L} |\nabla {\vv{v}}_{in}|^2\,.
$$
Explicit computations show that:
\begin{eqnarray*}
|\nabla_{x,y} {\vv{v}}_{in,\, \sslash}| &\leq & C_L \sum_{i=t,b} \left( |\nabla_{x,y} \gamma_i| \gamma |\nabla_{x,y} p_{in}| 
+ \gamma^2 |\nabla_{x,y}^2 p_{in}| + \left(\dfrac{|\nabla_{x,y} \gamma_i|}{\gamma} +1\right)   |\vv{u}^*_{R,\,\sslash}| + |\nabla_{x,y} \vv{u}_{R,\,\sslash}^*| \right)\,,  \\
|\partial_z {\vv{v}}_{in,\, \sslash}|
& \leq & C_L \left( \gamma |\nabla_{x,y} p_{in}| + \dfrac{1}{\gamma} |\vv{u}^*_{R,\,\sslash}|\right)\,,\\[4pt]
|\partial_z {{v}}_{in,\bot}|&\leq &C_L |\nabla_{x,y} {\vv{v}}_{in,\, \sslash}|\,, 
\end{eqnarray*}
and:
\begin{multline*}
|\nabla_{x,y} {{v}}_{in,\bot}| \leq C_L  \Bigg\{\sum_{i=t,b} \left( (|\nabla_{x,y} \gamma_i|^2 \gamma +  |\nabla^2_{x,y} \gamma_i|\gamma^2) |\nabla_{x,y} p_{in}|
+ \gamma^2 |\nabla_{x,y} \gamma_i| |\nabla_{x,y}^2 p_{in}|  \right) \\
\begin{array}{l}
+ 
\left( {|\nabla_{x,y}^2 \gamma_t| + |\nabla_{x,y}^2 \gamma| + |\nabla_{x,y} \gamma_t| + |\nabla_{x,y} \gamma|} + \dfrac{|\nabla_{x,y} \gamma_t| |\nabla_{x,y} \gamma| +  |\nabla_{x,y} \gamma|^2}{\gamma} +\gamma \right)   |\vv{u}^*_{R,\,\sslash}|    \\
 + \left({ |\nabla_{x,y} \gamma_t| + |\nabla \gamma| } + \gamma \right)  |\nabla_{x,y} \vv{u}_{R,\,\sslash}^*|+ \gamma |\nabla^2_{x,y} \vv{u}_{R,\,\sslash}^*| + \gamma^3 |\nabla_{x,y}^3 p_{in}|\Bigg\}\,,
\end{array}
\end{multline*}
with $C_L$ a constant depending on $L.$ 
Introducing that, for all $(x,y) \in B(\mathbf 0,L),$ 
there holds 
$$
|\nabla_{x,y} \gamma_b| + |\nabla_{x,y} \gamma_t|  +|\nabla_{x,y} \gamma|   \leq K[C^{reg}_2,C_{ell}] \gamma^{\frac 12}\, ,
\quad 
|\nabla^2_{x,y} \gamma_i| +  |\nabla^2_{x,y} \gamma|  +|\nabla^2_{x,y} \gamma| \leq K[C_{2}^{reg}]\,,
$$
we obtain that, on $\mathcal G_{L},$ there holds:
$$
|\nabla {\vv{v}}_{in}| \leq K \left(  \gamma |\nabla_{x,y} p_{in}| + \gamma^2 |\nabla_{x,y}^2 p_{in}| + 
\gamma^3 |\nabla_{x,y}^3 p_{in}| + \left(1+ \frac{1}{\gamma}\right) |\vv{u}^*_{R,\,\sslash}| + |\nabla_{x,y} \vv{u}^*_{R,\,\sslash}| +  \gamma |\nabla_{x,y}^{2} \vv{u}^*_{R,\,\sslash}| \right)\,. 
$$
Integrating this inequality entails that:
\begin{multline} \label{eq_encoreune}
\int_{\mathcal G_{L}} |\nabla {\vv{v}}_{in}|^2 {\rm d}x{\rm d}y{\rm d}z\leq  K[C_{ell},C^{reg}_2,L]  \|\vv{u}^*_{R,\,\sslash} ; H^2(B(\mathbf 0,L))\|^2 \\
+ K[C_{ell},C^{reg}_2,L] \int_{B(\mathbf 0,L)} \left(\gamma^3 |\nabla_{x,y} p_{in}|^2 + \gamma^5 |\nabla_{x,y}^2 p_{in}|^2 + \gamma^7 |\nabla_{x,y}^3 p_{in}|^2 + \dfrac{|\vv{u}^*_{R,\,\sslash}|^2}{\gamma}\right) {\rm d}x{\rm d}y .
\end{multline}
In this last identity, we note that, for $k\in \{1,2,3\}:$ 
$$
|\nabla_{x,y}^k p_{in}(x,y)| \leq |\nabla^k_{x,y} q_{in}(x,y)| + C_L\sum_{j=0}^{k-1} \mathbf{1}_{B(\mathbf 0,L) \setminus B(\mathbf 0,L/2)}(x,y) |\nabla_{x,y}^j q_{in}(x,y)|\,, \quad \forall \,(x,y) \in B(\mathbf 0,L) .  
$$
{So, we split again $q_{in} = q_{in}^{(1)}+hq_{in}^{(2)}$ with $q_{in}^{(1)}$ and $q_{in}^{(2)}$ defined respectively as the solutions to 
\begin{equation} \label{eq_pourq1R}
\begin{array}{rcll}
-\dfrac{1}{12} {\rm div}_{x,y}(\gamma^3 \nabla_{x,y} q^{(1)}_{in}) &=&  (u^*_{R,\bot} + \gamma_b{\rm div}_{x,y} \vv{u}_{R,\, \sslash}^{*} ) - \dfrac{1}{2}{\rm div}_{x,y}  (\gamma_t + \gamma_b )  \vv{u}^*_{R,\, \sslash}   \,, & \text{ on $B(\mathbf 0,L)\,,$} \\
q^{(1)}_{in} &=& 0 \,, & \text{ on $\partial B(\mathbf 0,L)$}\,. 
\end{array}
\end{equation}
and
\begin{equation} \label{eq_pourq2R}
\begin{array}{rcll}
-\dfrac{1}{12} {\rm div}_{x,y}(\gamma^3 \nabla_{x,y} q^{(2)}_{in}) &=& -\dfrac 12 {\rm div}_{x,y} \vv{u}_{R,\,\sslash}^{*}  \,, & \text{ on $B(\mathbf 0,L)\,,$} \\
q^{(2)}_{in} &=& 0 \,, & \text{ on $\partial B(\mathbf 0,L)$}\,. 
\end{array}
\end{equation}
So we have that $q^{(2)}_{in}$is a solution to \eqref{eq_papp1bis}-\eqref{eq_papp2bis} with a source term $f$ given by \eqref{eq_formf} associated with
$$
w_{\bot}^{*} = - \dfrac 12 {\rm div}_{x,y} \vv{u}_{R,\,\sslash}^*  \qquad \vv{v}^* = 0.
$$
where we note that $\|{\rm div}_{x,y} \vv{u}_{R,\,\sslash}^* ; L^{\infty}(B(0,L))\| \leq C\|\vv{u}^*_R ; H^3(B(\mathbf 0,L))\|.$
Applying propositions \ref{prop_unifreg}, \ref{prop_reg} and \ref{prop_reg2} to $q_{in}^{(2)},$ we obtain that, 
\begin{multline*}
\int_{B(\mathbf 0,L)} \left(\gamma^3 |\nabla_{x,y} q^{(2)}_{in}|^2 + \gamma^5 |\nabla_{x,y}^2 q^{(2)}_{in}|^2 + \gamma^7 |\nabla_{x,y}^3 q^{(2)}_{in}|^2  \right) {\rm d}x{\rm d}y 
\\ +  \|q_{in}^{(2)} ; H^3(B(\mathbf 0,L) \setminus B(\mathbf 0,L/2) )\| 
\leq \dfrac{K\|\vv{u}_R^* ; H^3(B(\mathbf 0,L))\|^2}{h}.
\end{multline*}
with $K$ depending on $C^{reg}_{3},$ $C_{cvx},$ $C_{ell}$ and $L.$

As for $q_{in}^{(1)},$ we remark that it is a solution to \eqref{eq_papp1bis}-\eqref{eq_papp2bis} with a source term $f$ given by \eqref{eq_formf} associated with
$$
w_{\bot}^{*} = u_{R,\bot}^{*} + \gamma_b {\rm div}_{x,y} \vv{u}_{R,\,\sslash}^*  \qquad \vv{v}^* = \vv{u}_{R,\, \sslash}^*.
$$
In this case, we have that  $w_{\bot}^{*}(\mathbf 0)=0$ and also that  $\nabla w_{\bot}^*(\mathbf 0),\vv{v}^*(\mathbf 0)$ vanish.
Consequently, we are in position to apply 
propositions \ref{prop_unifreg}, \ref{prop_reg} and \ref{prop_favorable} to $q_{in}^{(1)},$ yielding that:
\begin{multline*}
\int_{B(\mathbf 0,L)} \left(\gamma^3 |\nabla_{x,y} q^{(1)}_{in}|^2 + \gamma^5 |\nabla_{x,y}^2 q^{(1)}_{in}|^2 + \gamma^7 |\nabla_{x,y}^3 q^{(1)}_{in}|^2 \right)  \\
+ \|q_{in}^{(1)} ; H^3(B(\mathbf 0,L) \setminus B(\mathbf 0,L/2) )\|   \leq K \|\vv{u}^*_{R} ; H^3(B(\mathbf 0,L))\|^2   \,,
\end{multline*}  
with $K$ depending again on $C^{reg}_{3},$ $C_{cvx},$ $C_{ell}$ and $L.$ 
Combining the computations for $q_{in}^{(1)}$ and $q_{in}^{(2)},$ and arguing that
$$
\|\vv{u}^*_{R} ; H^3(B(\mathbf 0,L))\| \leq C \|\vv{u}^* ; H^3(B(\mathbf 0,L))\|
$$ 
we get finally:
\begin{equation} \label{eq_bornecasR}
\int_{B(\mathbf 0,L)} \left(\gamma^3 |\nabla_{x,y} p_{in}|^2 + \gamma^5 |\nabla^2_{x,y} p_{in}|^2 + \gamma^7 |\nabla^3_{x,y} p_{in}|^2 \right)  \leq K \|\vv{u}^* ; H^3(B(\mathbf 0,L))\|^2  \,,
\end{equation}
with $K$ depending on $C^{reg}_{3},$ $C_{cvx},$ $C_{ell}$ and $L.$
}

For the last term on the right-hand side of \eqref{eq_encoreune}, we add that $\vv{u}_{\sslash}^*(\mathbf 0) =\mathbf 0$,  implying:
$$
|\vv{u}_{R,\,\sslash}^*(x,y)| \leq (|x| + |y|) \|\vv{u}^* ; H^3({B(\mathbf 0,L)})\|\,, \quad \text{ on $B(\mathbf 0,L)$}\,.
$$
Consequently, going to polar coordinates yields (recall $\gamma$ satisfies \eqref{eq_Cell} whatever the value of $h \in (0,1]$):
$$
\int_{B(\mathbf 0,L)} \dfrac{|\vv{u}^*_{R,\,\sslash}|^2}{\gamma} \leq \dfrac{2\pi  \|\vv{u}^* ; H^3({B(\mathbf 0,L)})\|^2\ }{C_{ell}} \int_0^L \dfrac{r^3 {\rm d}r}{ r^2}  \leq {\dfrac{\pi L^2}{C_{ell}}} { \|\vv{u}^* ; H^3({B(\mathbf 0,L)})\|^3}.
$$
This ends the proof.
\end{proof}

\subsection{Asymptotics of  $\vv{v}_{as} - \vv{u}_{as}$}
We proceed with a first bound on the singular term in the general case ({\em i.e.}, without structure assumption on $\gamma$). We prove:
\begin{proposition}\label{prop_vas} If $h \in (0,1],$ there exists a constant $K$ depending on $\partial \Omega,$ $\delta,$ $C^{reg}_{3},$ $C_{cvx},$ $C_{ell}$ and $L$ such that there holds:
$$
\int_{\mathcal F} |\nabla \vv{u}_{as} - \nabla \vv{v}_{as}|^2 \leq K[ |u^*_{\bot}(\mathbf 0)|^2 |\ln(h)| + \|\vv{u}^* ; H^3(B(\mathbf 0,L))\|^2].
$$ 
\end{proposition}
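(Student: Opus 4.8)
The plan is to run the approximation criterion of Proposition~\ref{prop_car2} on the pair $(\vv{v}_{as},q)$ with a well-chosen pressure. Recall from Section~\ref{sec_asvf} that $\vv{v}_{as}=\vv{v}[\vv{u}^*_{as}]=\vv{v}_{in}+\vv{v}_{ext}$, that properties (VR1)--(VR8) give $\vv{v}_{as}\in V[\vv{u}^*_{as}]$, and that $\vv{u}_{as}$ is the exact solution in the same affine space; hence $\|\vv{v}_{as}-\vv{u}_{as};V\|\le C[\vv{v}_{as},q]$ for every admissible $q$. We take $q=p_{in}+\tilde q_{ext}$, where $p_{in}=\chi_L q_{in}$ is the internal lubrication pressure (which, together with all its $(x,y)$-derivatives, vanishes on the lateral face of $\partial\mathcal G_L$ since $\chi\equiv 0$ near $\pm1$, so its extension by $0$ is smooth) and $\tilde q_{ext}\in C^\infty(\mathcal F)$ is a smooth extension of the exterior Stokes pressure $q_{ext}$. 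Because $\Delta\vv{v}_{as}-\nabla q=(\Delta\vv{v}_{in}-\nabla p_{in})+(\Delta\vv{v}_{ext}-\nabla\tilde q_{ext})$ is linear in its arguments and $\|\cdot;V_0^*\|$ is a norm, it suffices to bound $C[\vv{v}_{in},p_{in}]$ and $C[\vv{v}_{ext},\tilde q_{ext}]$ separately.

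The exterior term is cheap. Testing $\Delta\vv{v}_{ext}-\nabla\tilde q_{ext}$ against $\vv{w}\in\mathcal V_0$, the pressure drops because $\vv{w}$ is divergence-free, and since $\vv{v}_{ext}\in H^1(\mathcal F)$ (the trivial extension of the solution of \eqref{eq_stokesext1}--\eqref{eq_stokesext3}) one gets $\langle\Delta\vv{v}_{ext}-\nabla\tilde q_{ext},\vv{w}\rangle=-\int_{\mathcal F}\nabla\vv{v}_{ext}:\nabla\vv{w}$, whence $C[\vv{v}_{ext},\tilde q_{ext}]\le\|\nabla\vv{v}_{ext};L^2(\mathcal F)\|$. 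By Lemma~\ref{lem_vext} this is $\le K(\|\vv{u}^*_{as};H^2(B(\mathbf 0,L))\|+\|\vv{u}^*_{as};H^{1/2}(\partial\Omega)\|)$, and since $\vv{u}^*_{as}$ is affine in $(x,y)$ with coefficients $\vv{u}^*(\mathbf 0)$ and $\nabla u^*_\bot(\mathbf 0)$, the planar embedding $H^3(B(\mathbf 0,L))\hookrightarrow C^1(\overline{B(\mathbf 0,L)})$ bounds both norms by $K\|\vv{u}^*;H^3(B(\mathbf 0,L))\|$, which is absorbed.

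For the interior term, the crucial structural fact is that $\vv{v}_{in}$ is built from the lubrication/Poiseuille profiles so that the dominant balance cancels: one checks $\partial_{zz}\vv{v}_{in,\sslash}=\nabla_{x,y}p_{in}$ and $\partial_z p_{in}=0$, hence the residual $\mathcal R:=\Delta\vv{v}_{in}-\nabla p_{in}$ (supported in $\mathcal G_L$) has components $\mathcal R_\sslash=\Delta_{x,y}\vv{v}_{in,\sslash}$ and $\mathcal R_\bot=\Delta v_{in,\bot}$, both polynomials in $z$ of bounded degree whose coefficients are built from $\gamma,\gamma_t,\gamma_b$ (up to three derivatives, via \eqref{eq_CL}) and from $\nabla^j p_{in}$ ($j\le4$, since an extra ${\rm div}_{x,y}$ sits in $v_{in,\bot}$) and $\nabla^j\vv{u}^*_{as,\sslash}$ ($j\le2$; as $\vv{u}^*_{as,\sslash}$ is constant only $\vv{u}^*_\sslash(\mathbf 0)$ enters, through $\gamma$-dependent factors). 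Since $\vv{w}\in\mathcal V_0$ vanishes on the top and bottom of the gap, a fibrewise one-dimensional Poincar\'e inequality gives $\|\gamma^{-1}\vv{w};L^2(\mathcal G_L)\|\le\|\partial_z\vv{w};L^2\|\le\|\vv{w};V\|$, so
\[
\Big|\int_{\mathcal G_L}\mathcal R\cdot\vv{w}\Big|\le\|\gamma\,\mathcal R;L^2(\mathcal G_L)\|\,\|\vv{w};V\|,\qquad\text{hence}\qquad C[\vv{v}_{in},p_{in}]\le\|\gamma\,\mathcal R;L^2(\mathcal G_L)\|.
\]

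It remains to estimate $\|\gamma\,\mathcal R;L^2(\mathcal G_L)\|^2$. Integrating out $z$ (each factor $|z-h-\gamma_t|$, $|z-\gamma_b|$ is $\le\gamma$, and $\nabla^j p_{in}$ is $z$-free) reduces it to a finite sum $\sum\int_{B(\mathbf 0,L)}\gamma^{m}|\nabla^{j}p_{in}|^2$, plus the $|\vv{u}^*_\sslash(\mathbf 0)|^2$-terms, each of which integrates to a bounded multiple of $|\vv{u}^*_\sslash(\mathbf 0)|^2$ because $\int_{\mathcal G_L}\gamma^{-1}{\rm d}x\,{\rm d}y\,{\rm d}z=|B(\mathbf 0,L)|$. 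Using \eqref{eq_CL}--\eqref{eq_Cell} one sees that $m\ge 3+1$ whenever $j=1$ --- the extreme case coming from the term $\nabla_{x,y}(\gamma_t+\gamma_b)\cdot\nabla_{x,y}p_{in}$ inside $\partial_{zz}v_{in,\bot}$ --- and, more generally, that $\nabla^2p_{in},\nabla^3p_{in},\nabla^4p_{in}$ always carry weights large enough for Proposition~\ref{prop_reg} to reduce them to $\int\gamma^{m}|\nabla p_{in}|^2$-terms plus Sobolev norms of the data. Writing $p_{in}=\chi_L q_{in}$ and splitting $q_{in}=q_{in}^{(1)}+hq_{in}^{(2)}$ exactly as in Lemma~\ref{lem_R}, the truncation errors supported in $B(\mathbf 0,L)\setminus B(\mathbf 0,L/2)$ are controlled $h$-uniformly by Proposition~\ref{prop_unifreg}, and the weighted bulk quantities by Propositions~\ref{prop_reg2} and \ref{prop_reg}. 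For the data $\vv{u}^*_{as}$ one has $w^*=u^*_{as,\bot}$ with $w^*(\mathbf 0)=u^*_\bot(\mathbf 0)$ and $\vv{v}^*=\vv{u}^*_{as,\sslash}$, so the only term that does not stay bounded as $h\to0$ is dominated by $\int_{B(\mathbf 0,L)}\gamma^{4}|\nabla q_{in}|^2$, which Proposition~\ref{prop_reg2} (case $n=1$) bounds by $K(|u^*_\bot(\mathbf 0)|^2|\ln h|+\|\vv{u}^*;H^3(B(\mathbf 0,L))\|^2)$; all other terms are $\le K\|\vv{u}^*;H^3(B(\mathbf 0,L))\|^2$. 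Adding the interior and exterior estimates and squaring yields the proposition. The only real difficulty is the weight bookkeeping in this last step: one must check that no piece of $\gamma\,\mathcal R$ carries a weight as low as $\gamma^{3}|\nabla q_{in}|^2$ (which would diverge like $h^{-1}$), and that the fourth derivatives of $q_{in}$ --- which do occur in $\mathcal R_\bot$ --- enter only with weights $\ge\gamma^{3+3}$, so that Propositions~\ref{prop_reg} and \ref{prop_reg2} close the argument with an $h$-uniform bound.
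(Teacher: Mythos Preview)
Your argument is correct and follows the same overall architecture as the paper's proof: apply Proposition~\ref{prop_car2} to $(\vv{v}_{as},p_{in})$, dispose of the exterior piece via Lemma~\ref{lem_vext}, use the cancellations $\partial_{zz}\vv{v}_{in,\sslash}=\nabla_{x,y}p_{in}$ and $\partial_z p_{in}=0$ to simplify the residual, and close with Propositions~\ref{prop_reg2}, \ref{prop_unifreg}, \ref{prop_reg}, the bottleneck being $\int_{B(\mathbf 0,L)}\gamma^4|\nabla q_{in}|^2$ from the $n=1$ case.

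The one genuine technical difference is in how you estimate the pairing $\int_{\mathcal G_L}\mathcal R\cdot\vv{w}$. You use the fibrewise Poincar\'e inequality in $z$ to gain a factor $\gamma$ and then bound $\|\gamma\,\mathcal R;L^2(\mathcal G_L)\|$ directly; this forces you to control the full second-order quantity $\Delta_{x,y}\vv{v}_{in,\sslash}$ and $\Delta v_{in,\bot}$, hence up to $\nabla^4 q_{in}$. The paper instead integrates by parts in $(x,y)$ (and fully for the $\bot$-component), shifting one derivative onto $\vv{w}$ and reducing to $\int_{\mathcal G_L}|\nabla_{x,y}\vv{v}_{in,\sslash}|^2+|\nabla v_{in,\bot}|^2$, which needs only $\nabla^3 q_{in}$. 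Both routes land on the same critical weight $\gamma^4|\nabla q_{in}|^2$; the paper's is a touch more economical in the regularity budget, while yours is perhaps more systematic. Two minor simplifications you could make: since $\vv{u}^*_{as,\sslash}$ is constant, ${\rm div}_{x,y}\vv{u}^*_{as,\sslash}=0$ and the splitting $q_{in}=q_{in}^{(1)}+hq_{in}^{(2)}$ is unnecessary here (the paper notes this explicitly); and you need not introduce $\tilde q_{ext}$ at all, since testing $\nabla\vv{v}_{ext}:\nabla\vv{w}$ directly already gives the exterior bound.
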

\begin{proof}
We recall that $\vv{v}_{as} = \vv{v}_{in} + \vv{v}_{ext}.$ 
It is sufficient to compute a constant $K$ (independent of $h$) such that, if $h \in (0,1]$, for all $\vv{w} \in \mathcal V_0$, we have:
$$
\left| \int_{\mathcal F} (\Delta {\vv{v}}_{in} - \nabla {p}_{in}) \cdot \vv{w} \right| \leq K  \left[ \int_{\mathcal F} |\nabla \vv{w}|^2\right]^{\frac 12}. 
$$
(with ${p}_{in}$ given by \eqref{eq_ptilde}). Indeed,  we have then:
$$
\left| \langle \Delta {\vv{v}}_{as} - \nabla p_{in}, \vv{w} \rangle \right| \leq  \left| \int_{\mathcal F} (\Delta \vv{v}_{in} - \nabla p_{in}) \cdot \vv{w} \right| + \left|\int_{\mathcal F} \nabla \vv{v}_{ext} : \nabla \vv{w}\right|
$$
Applying Lemma \ref{lem_vext} together with the remark that the mapping $\vv{u}^* \mapsto \vv{u}^*_{as}$ is linear continuous $H^3(B(\mathbf 0,L)) \to H^2(B(\mathbf 0,L)) \cap H^{\frac 12}(\partial \Omega)$, we bound the other term by:
$$
\left|\int_{\mathcal F} \nabla \vv{v}_{ext} : \nabla \vv{w}\right| \leq K \|\vv{u}^* ; H^3(B(\mathbf 0,L))\| \|\vv{w} ; V\|\,.
$$
with $K$ depending on $\partial \Omega,$ $C_{2}^{reg},$ $C_{ell}$ and $\delta.$ 
 
\medskip

We emphasize that $\vv{v}_{in}$ and $p_{in}$ have support in $\mathcal G_L$
so that:
$$
I[\vv{w}] :=  \int_{\mathcal F} (\Delta \vv{v}_{in} - \nabla p_{in}) \cdot \vv{w}=  \int_{\mathcal G_L} (\Delta \vv{v}_{in} - \nabla p_{in}) \cdot \vv{w} \,.
$$
By construction, we have that:
$$
\partial_{zz} \vv{v}_{in,\, \sslash} = \nabla_{x,y} p_{in}\,, \qquad \partial_z p_{in} = 0
$$
Consequently, there holds:
$$
\Delta \vv{v}_{in} - \nabla p_{in} = \Delta_{x,y} \vv{v}_{in,\,\sslash} + \Delta {v}_{in,\bot} \vv{e}_z.
$$
For any $\vv{w} \in \mathcal V_0$ we can then bound $I[\vv{w}]$ by integrating by parts:
\begin{eqnarray*}
| I[\vv{w}] |  &=& \left| \int_{\mathcal G_{L}} \nabla_{x,y} \vv{v}_{in,\,\sslash} : \nabla_{x,y} \vv{w} + 
\int_{\mathcal G_L} \nabla {v}_{in,\bot} \cdot \nabla w_{\bot} \right|\\
&\leq & \left[\int_{\mathcal G_L} |\nabla_{x,y} \vv{v}_{in,\,\sslash}|^2 + |\nabla {v}_{in,\bot}|^2 \right]^{\frac 12} \|\nabla \vv{w} ;  L^2(\mathcal F)\|.
\end{eqnarray*}
The remainder of the proof follows the line of the proof of {Lemma \ref{lem_R}}.
First, we bound $|\nabla_{x,y} \vv{v}_{in,\,\sslash}|$ and $|\nabla {v}_{in,\bot}|$
as in the proof of this lemma, this yields: 
\begin{multline*}
|\nabla_{x,y} \vv{v}_{in,\,\sslash}| + |\nabla {v}_{in,\bot}| \\
\leq K[C_{ell},C^{reg}_2]  \Bigg(
 \gamma^{\frac 32} |\nabla_{x,y} p_{in}| + \gamma^2 |\nabla^2_{x,y} p_{in}| + 
\gamma^3 |\nabla^3 p_{in}|  \\
+\left( 1 +  \dfrac 1{\gamma^{\frac 12}}\right) |\vv{u}^*_{as,\,\sslash}|  {+ |\nabla_{x,y}\vv{u}^*_{as,\,\sslash}| + |\nabla^2_{x,y}\vv{u}^*_{as,\,\sslash}| }
\Bigg)\,.
\end{multline*}
We obtain then:
\begin{multline} \label{eq_controlvas}
\left|\int_{\mathcal G_{L}} |\nabla_{x,y} \vv{v}_{in,\,\sslash}|^2 + |\nabla {v}_{in,\bot}|^2 \right| \\
 \leq C \left[ {\|\vv{u}^*_{as,\,\sslash} ; H^2(B(\mathbf 0,L))\|^2} 
+ 
\int_{B(\mathbf 0,L)} \!\!\!\!\!\!\left(\gamma^4 |\nabla_{x,y} p_{in}|^2 + \gamma^5 |\nabla^2_{x,y} p_{in}|^2 + \gamma^7 |\nabla^3_{x,y} p_{in}|^2 \right)  \right],
\end{multline}
and we bound the last integrals on the right-hand side by computing $p_{in}$ with respect to $q_{in}$ and  applying {Propositions \ref{prop_reg2}, \ref{prop_unifreg} and \ref{prop_reg}}  as in the proof of {Lemma \ref{lem_R}}. Note that $q_{in}$ has only one component because ${\rm div}_{x,y} \mathbf{u}^*_{as,\,\sslash} = 0.$
\end{proof}

We end this section by considering the case where the aperture admits a cylindrical invariance: 
$\gamma = \gamma(\sqrt{x^2+y^2}).$ In this case we introduce $(\vv{u}_0,p_0)$
and $(\vv{u}_1,p_1)$ the solutions to the Stokes system associated with boundary conditions:
\begin{eqnarray*}
\vv{u}^*_0 &=& u^*_{\bot}(\mathbf 0)\vv{e}_z \,, \\
\vv{u}^*_1 &=&  \vv{u}^*_{\sslash}(\mathbf 0) +( x \partial_x u^*_{\bot}(\mathbf 0) + y \partial_y u^*_{\bot}(\mathbf 0)) \vv{e}_z \,,
\end{eqnarray*}
We note that straightforward computations entail:
$$
\int_{\partial \Omega} \vv{u}^*_i \cdot \vv{n} {\rm d}\sigma  =0, \qquad \vv{u}^*_i \in C^{\infty}(\partial \Omega),  \quad \forall \, i \in \{0,1\}\,.
$$
So, we have indeed existence and uniqueness of the pairs $(\vv{u}_i,p_i)_{i=0,1}.$
We also introduce $\vv{v}_0=\vv{v}[\vv{u}_0^*],\;\vv{v}_1= \vv{v}[\vv{u}_1^*]$ the respective approximations of $\vv{u}_0$ and $\vv{u}_1$ constructed applying the steps depicted in  Section \ref{sec_asvf}. We note that, due to the linearity of the Stokes problem and of our construction, we have:
$$
\vv{u}_{as} = \vv{u}_0 + \vv{u}_1\,, \, \quad \vv{v}_{as} = \vv{v}_0 + \vv{v}_1 \,. 
$$

First, remarking that $\vv{u}^*_1(\mathbf 0) \cdot \vv{e}_z$ vanishes (so that there is no logarithmic terms yielding from the application of {Proposition \ref{prop_reg2}}), we reproduce the computations in the proof of the previous proposition and obtain at first:
\begin{proposition} \label{prop_v1} If $h \in (0,1],$ there exists a constant $K$ depending on $\partial \Omega,$ $\delta,$  $C^{reg}_{3},$ $C_{cvx},$ $C_{ell}$ and $L$ such that there holds: 
$$
\int_{\mathcal F} |\nabla \vv{u}_{1} - \nabla \vv{v}_{1}|^2 \leq K\|\vv{u}^* ; H^3(B(\mathbf 0,L))\|^2.
$$ 
\end{proposition}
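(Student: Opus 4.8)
The plan is to reproduce, step for step, the argument of Proposition~\ref{prop_vas}, the single—but decisive—difference being that the datum $\vv{u}^*_1$ carries no constant normal component at the origin, so the logarithmic term coming from Proposition~\ref{prop_reg2} disappears. Concretely, I would invoke the criterion of Proposition~\ref{prop_car2} with the pair $(\vv{v}_1,p_{in})$, where $p_{in}=\chi_L q_{in}$ is the aperture pressure attached to $\vv{u}^*_1$ through \eqref{eq_ptilde}, so that $\|\vv{u}_1-\vv{v}_1;V\|\le C[\vv{v}_1,p_{in}]$. Writing $\vv{v}_1=\vv{v}_{in}+\vv{v}_{ext}$ and $\Delta\vv{v}_1-\nabla p_{in}=(\Delta\vv{v}_{in}-\nabla p_{in})+\Delta\vv{v}_{ext}$ in $\mathcal D'(\mathcal F)$, the exterior piece is handled by Lemma~\ref{lem_vext}: for $\vv{w}\in\mathcal V_0$, $|\int_{\mathcal F}\nabla\vv{v}_{ext}:\nabla\vv{w}|\le K(\|\vv{u}^*_1;H^2(B(\mathbf 0,L))\|+\|\vv{u}^*_1;H^{\frac12}(\partial\Omega)\|)\|\vv{w};V\|$; since $\vv{u}^*_1$ is the affine field built from $\vv{u}^*_{\sslash}(\mathbf 0)$ and $\nabla_{x,y}u^*_\bot(\mathbf 0)$, the two-dimensional embedding $H^3\hookrightarrow C^1$ makes $\vv{u}^*\mapsto\vv{u}^*_1$ bounded from $H^3(B(\mathbf 0,L))$ into $H^2(B(\mathbf 0,L))\cap H^{\frac12}(\partial\Omega)$, and this term is $\le K\|\vv{u}^*;H^3(B(\mathbf 0,L))\|\,\|\vv{w};V\|$.

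For the interior piece, I would use that by construction $\partial_{zz}\vv{v}_{in,\sslash}=\nabla_{x,y}p_{in}$ and $\partial_z p_{in}=0$, so $\Delta\vv{v}_{in}-\nabla p_{in}=\Delta_{x,y}\vv{v}_{in,\sslash}+\Delta v_{in,\bot}\vv{e}_z$ is supported in $\mathcal G_L$; integrating by parts in $\mathcal G_L$ against $\vv{w}\in\mathcal V_0$, exactly as in Proposition~\ref{prop_vas}, gives $|\langle\Delta\vv{v}_{in}-\nabla p_{in},\vv{w}\rangle|\le(\int_{\mathcal G_L}|\nabla_{x,y}\vv{v}_{in,\sslash}|^2+|\nabla v_{in,\bot}|^2)^{1/2}\|\vv{w};V\|$. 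Note that only the \emph{tangential} derivatives of $\vv{v}_{in,\sslash}$ enter, which by \eqref{eq_CL} carry an extra half-power of $\gamma$; thus the pointwise bounds of Proposition~\ref{prop_vas}, combined with \eqref{eq_CL}--\eqref{eq_boundtbg} and integration in $z$ over an interval of width $\gamma$, reduce the remaining integral to
\[
\|\vv{u}^*_{1,\sslash};H^2(B(\mathbf 0,L))\|^2+\int_{B(\mathbf 0,L)}\Big(\gamma^4|\nabla_{x,y}p_{in}|^2+\gamma^5|\nabla^2_{x,y}p_{in}|^2+\gamma^7|\nabla^3_{x,y}p_{in}|^2\Big),
\]
the first summand being $\le K\|\vv{u}^*;H^3(B(\mathbf 0,L))\|^2$ because $\vv{u}^*_{1,\sslash}=\vv{u}^*_{\sslash}(\mathbf 0)$ is a constant vector.

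The crux—and the place where the argument departs favourably from Proposition~\ref{prop_vas}—is the weighted integral. Because $\vv{u}^*_{1,\sslash}$ is constant, ${\rm div}\,\vv{u}^*_{1,\sslash}=0$, so the $h$-dependent source in \eqref{eq_pourq} drops out and $q_{in}$ is \emph{directly} a solution of \eqref{eq_papp1bis}--\eqref{eq_papp2bis} with $w^*=u^*_{1,\bot}$ and $\vv{v}^*=\vv{u}^*_{\sslash}(\mathbf 0)$ (no splitting $q_{in}=q_{in}^{(1)}+hq_{in}^{(2)}$ is needed, hence no division rate enters); and, most importantly, $w^*(\mathbf 0)=u^*_{1,\bot}(\mathbf 0)=0$, the constant normal part $u^*_\bot(\mathbf 0)\vv{e}_z$ having been assigned to $\vv{u}^*_0$ rather than to $\vv{u}^*_1$. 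Consequently Proposition~\ref{prop_reg2} with $n=1$, applied to $\int_{B(\mathbf 0,L)}\gamma^4|\nabla q_{in}|^2$, produces the logarithm with coefficient $|w^*(\mathbf 0)|^2=0$, and the rest of its right-hand side is $\le K(\|\vv{v}^*;L^2\|^2+\|w^*;H^2\|^2)\le K\|\vv{u}^*;H^3(B(\mathbf 0,L))\|^2$ uniformly in $h\in(0,1]$; Proposition~\ref{prop_reg} (with $(k,n)=(1,2)$ and $(k,n)=(2,4)$) then controls $\int_B\gamma^5|\nabla^2 q_{in}|^2$ and $\int_B\gamma^7|\nabla^3 q_{in}|^2$ by $\int_B\gamma^4|\nabla q_{in}|^2$, by $\int_B\gamma^5|\nabla q_{in}|^2$ (the harmless $n=2$ case of Proposition~\ref{prop_reg2}) and by Sobolev norms of the data, while the terms in which a derivative hits the cut-off $\chi_L$ are supported in $B(\mathbf 0,L)\setminus B(\mathbf 0,L/2)$, where $\gamma$ is bounded below, and are absorbed through the uniform interior estimate of Proposition~\ref{prop_unifreg}. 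Collecting everything yields $C[\vv{v}_1,p_{in}]\le K\|\vv{u}^*;H^3(B(\mathbf 0,L))\|$ with $K$ depending only on $\partial\Omega,\delta,C^{reg}_3,C_{cvx},C_{ell},L$, which is the claim. I expect no genuine obstacle here: the radial structure of $\gamma$ is not used in this proof (it is recorded only because the splitting $\vv{u}_{as}=\vv{u}_0+\vv{u}_1$ is introduced in this subsection), and the logarithmic growth that Proposition~\ref{prop_vas} had to tolerate simply vanishes once one observes $u^*_{1,\bot}(\mathbf 0)=0$; all remaining work is the bookkeeping already carried out in the proofs of Propositions~\ref{prop_uR} and~\ref{prop_vas}.
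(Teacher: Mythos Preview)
Your proposal is correct and follows exactly the approach indicated in the paper: the paper's proof of Proposition~\ref{prop_v1} is literally a one-line remark that one reproduces the computations of Proposition~\ref{prop_vas}, observing that $u^*_{1,\bot}(\mathbf 0)=0$ kills the logarithmic term coming from Proposition~\ref{prop_reg2}. You have spelled out precisely this, including the observation that ${\rm div}_{x,y}\vv{u}^*_{1,\sslash}=0$ so no $q_{in}^{(1)}+hq_{in}^{(2)}$ splitting is needed, and your choice of parameters in Proposition~\ref{prop_reg} is correct.
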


\medskip

We complete the study of $\vv{u}_{as}- \vv{v}_{as}$ by computing the asymptotics of the 
most singular term: $\vv{u}_0 - \vv{v}_0.$
We prove:
\begin{proposition} \label{prop_v0} If $h \in (0,1]$ and $\gamma$ is radial, there exists a constant $K$ depending on $\partial \Omega,$ $\delta,$ $C^{reg}_{3},$ $C_{cvx},$ $C_{ell}$ and $L$ such that there holds:
$$
\int_{\mathcal F} |\nabla \vv{u}_0 - \nabla \vv{v}_0|^2 \leq K\|\vv{u}^* ; H^2(B(\mathbf 0,L))\|^2.
$$
\end{proposition}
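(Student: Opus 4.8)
The plan is to run the approximation criterion of Proposition~\ref{prop_car2} with $(\vv{v},q)=(\vv{v}_0,p_{in})$, where $p_{in}=\chi_L q_{in}$ is the pressure attached to $\vv{v}_0$ (so $q_{in}$ solves \eqref{eq_pourq} with data $\vv{u}_0^*=u_{\bot}^*(\mathbf 0)\vv{e}_z$). Since $\vv{v}_0\in V[\vv{u}_0^*]$, this reduces the claim to proving
$|\langle\Delta\vv{v}_0-\nabla p_{in},\vv{w}\rangle|\le K\,\|\vv{u}^*;H^2(B(\mathbf 0,L))\|\,\|\vv{w};V\|$ for every $\vv{w}\in\mathcal V_0$, with $K$ independent of $h\in(0,1]$. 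Splitting $\vv{v}_0=\vv{v}_{in}+\vv{v}_{ext}$, the $\vv{v}_{ext}$-part contributes $\int_{\Omega^*}\nabla\vv{v}_{ext}:\nabla\vv{w}$, which Lemma~\ref{lem_vext} bounds by the required amount once one notes that $\vv{u}_0^*$ is essentially the constant vector $u_{\bot}^*(\mathbf 0)\vv{e}_z$, whose $H^2(B(\mathbf 0,L))$- and $H^{1/2}(\partial\Omega)$-norms are both $\lesssim\|\vv{u}^*;H^2(B(\mathbf 0,L))\|$. So the whole matter is to estimate $I[\vv{w}]:=\int_{\mathcal G_L}(\Delta\vv{v}_{in}-\nabla p_{in})\cdot\vv{w}$ uniformly in $h$, and this is where the radial hypothesis and the incompressibility of $\vv{w}$ will be used.

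I would proceed as in the proof of Lemma~\ref{lem_R}, exploiting the construction. Since $\vv{u}^*_{0,\sslash}=\mathbf 0$ we have $\vv{v}_{in,\sslash}=\frac12(z-(h+\gamma_t))(z-\gamma_b)\nabla_{x,y}p_{in}$, hence $\partial_{zz}\vv{v}_{in,\sslash}=\nabla_{x,y}p_{in}$, $\partial_z p_{in}=0$, and $\partial_z v_{in,\bot}=-{\rm div}_{x,y}\vv{v}_{in,\sslash}$; consequently $\Delta\vv{v}_{in}-\nabla p_{in}=\Delta_{x,y}\vv{v}_{in,\sslash}+(\partial_{zz}+\Delta_{x,y})v_{in,\bot}\,\vv{e}_z$. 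Because $\vv{w}$ is divergence-free with compact support in $\mathcal F$, $\int_{\mathcal F}\nabla p_{in}\cdot\vv{w}=0$, which kills the most singular term $\int\partial_{zz}\vv{v}_{in,\sslash}\cdot\vv{w}_{\sslash}=\int\nabla_{x,y}p_{in}\cdot\vv{w}_{\sslash}$, and $I[\vv{w}]$ reduces to $\int\Delta_{x,y}\vv{v}_{in,\sslash}\cdot\vv{w}_{\sslash}+\int\partial_{zz}v_{in,\bot}\,w_{\bot}+\int\Delta_{x,y}v_{in,\bot}\,w_{\bot}$. In the second term I integrate by parts in $z$ and use $\partial_z w_{\bot}=-{\rm div}_{x,y}\vv{w}_{\sslash}$ to rewrite it as $-\int{\rm div}_{x,y}\vv{v}_{in,\sslash}\,{\rm div}_{x,y}\vv{w}_{\sslash}=\int\nabla_{x,y}{\rm div}_{x,y}\vv{v}_{in,\sslash}\cdot\vv{w}_{\sslash}$; in the third I integrate by parts once in $(x,y)$ to get $-\int\nabla_{x,y}v_{in,\bot}\cdot\nabla_{x,y}w_{\bot}$. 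All boundary terms vanish since $\vv{w}$ vanishes on the plates $z=\gamma_b$ and $z=h+\gamma_t$ and $\vv{v}_{in}$ vanishes near the lateral cylinder $\{|(x,y)|=L\}$.

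It remains to estimate these three integrals, which I would do using the same pointwise $\gamma$-power bounds on $\vv{v}_{in,\sslash}$, $v_{in,\bot}$ and their tangential derivatives as in Lemma~\ref{lem_R} (the kernel $\Psi$ defining $v_{in,\bot}$ obeys $|\nabla^a_{x,y}\Psi|\le K\gamma^{3-a/2}$), but with two improvements over Proposition~\ref{prop_vas}. First, wherever $\vv{w}$ appears undifferentiated I use the one-dimensional Poincar\'e inequality $\|\vv{w}(x,y,\cdot)\|_{L^1(\gamma_b,h+\gamma_t)}\le\gamma(x,y)^{3/2}\|\partial_z\vv{w}(x,y,\cdot)\|_{L^2(\gamma_b,h+\gamma_t)}$ (valid because $\vv{w}$ vanishes on both plates), which buys one full power of $\gamma$ over the Cauchy--Schwarz bound of Proposition~\ref{prop_vas}; in $\int\nabla_{x,y}v_{in,\bot}\cdot\nabla_{x,y}w_{\bot}$, where the $\gamma$-weight is already large enough, I keep plain Cauchy--Schwarz. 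Second, I use that $\gamma$ is radial: on $\{\chi_L=1\}$ the source of $q_{in}$ is the constant $u_{\bot}^*(\mathbf 0)$, so $q_{in}$ is explicit with $\partial_r q_{in}=-6\,u_{\bot}^*(\mathbf 0)\,r/\gamma^3$ (cf. the proof of Proposition~\ref{prop_casf0}), and since $r\le C_{ell}^{-1/2}\gamma^{1/2}$ by \eqref{eq_Cell} this gives the sharp bounds $|\nabla^k q_{in}|\le K\,|u_{\bot}^*(\mathbf 0)|\,\gamma^{-(k+4)/2}$ for $k=1,2,3$ there, while on $\{\chi_L\ne1\}\subset B(\mathbf 0,L)\setminus B(\mathbf 0,L/2)$ one has $\gamma\gtrsim1$ and $q_{in},\dots,\nabla^3 q_{in}$ are controlled in $L^2$ by Proposition~\ref{prop_unifreg}. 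Combining, each of the three integrals is $\le\|\vv{w};V\|$ times the square root of a sum of weighted quantities $\int_{B(\mathbf 0,L)}\gamma^{m}|\nabla^{k}q_{in}|^2$ with $(m,k)\in\{(5,1),(6,2),(7,3)\}$; the sharp bounds make the integrands $\lesssim|u_{\bot}^*(\mathbf 0)|^2\gamma^{m-(k+4)}$, hence $\lesssim|u_{\bot}^*(\mathbf 0)|^2 r^2/\gamma$ for $(5,1)$ and bounded for $(6,2),(7,3)$, so each is $\le K|u_{\bot}^*(\mathbf 0)|^2$ uniformly in $h$ (using $\int_0^L r^3/\gamma(r)\,{\rm d}r\le C_{ell}^{-1}\int_0^L r\,{\rm d}r$). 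These exponents land exactly one power of $\gamma$ above the borderline ones producing an $|\ln h|$ in Proposition~\ref{prop_vas}: that margin is exactly what the divergence-free cancellations, the Poincar\'e gain, and the radial sharpening supply together. Collecting everything yields $|\langle\Delta\vv{v}_0-\nabla p_{in},\vv{w}\rangle|\le K\,\|\vv{u}^*;H^2(B(\mathbf 0,L))\|\,\|\vv{w};V\|$ with $K$ depending only on $\partial\Omega,\delta,C^{reg}_3,C_{cvx},C_{ell},L$, and Proposition~\ref{prop_car2} then gives the assertion. The main obstacle is precisely the bookkeeping of this last step — choosing the integration-by-parts pattern so that every surviving term lands in the "one-power-above-borderline" regime, in concert with the two divergence-free cancellations, and then the somewhat fiddly verification that the boundary terms in those integrations by parts truly vanish on the non-product domain $\mathcal G_L$.
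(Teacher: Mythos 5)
Your proposal is correct and follows essentially the same route as the paper: reduce via the duality criterion of Proposition~\ref{prop_car2}, treat $\vv{v}_{ext}$ with Lemma~\ref{lem_vext}, exploit $\partial_{zz}\vv{v}_{in,\sslash}=\nabla_{x,y}p_{in}$ and the incompressibility of $\vv{w}$ and $\vv{v}_{in}$ to integrate by parts, gain one power of $\gamma$ through a Hardy/Poincar\'e inequality in the thin direction, and use the explicit radial formula for $q_{in}$ to make all weighted integrals $h$-uniform. The only (cosmetic) deviation is that the paper merges the two second-derivative terms via the radial identity $\nabla_{x,y}\,{\rm div}_{x,y}\vv{v}_{in,\sslash}=\Delta_{x,y}\vv{v}_{in,\sslash}$ and then invokes the weighted estimates of Section~\ref{sec_Gilbarg}, whereas you bound the two terms separately with the pointwise decay $|\nabla^k q_{in}|\lesssim |u^*_{\bot}(\mathbf 0)|\,\gamma^{-(k+4)/2}$ — the bookkeeping is the same.
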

\begin{proof}
As the mapping $\vv{u}^* \mapsto \vv{u}_0^*$ is continuous $H^2(B(\mathbf 0,L)) \to H^3(B(\mathbf 0,L)) \cap H^{\frac 12}(\partial \Omega),$ we treat only the case $\vv{u}_{0}^*= \mathbf{e}_z$
and we drop superfluous index $0.$

\medskip

In this cylindrical case, a particular feature of $\vv{v}_0$ is that, in the problem solved by ${q}_{in}$ the weight $\gamma$ is invariant by rotation 
around the origin and the source term is a constant. Consequently, ${q}_{in}$ is a radial function and an explicit formula is available (as in the proof of Proposition \ref{prop_casf0}). Up to assume that $\chi_L$ is also
radial, we get that ${\vv{v}}_{in,\, \sslash}$ is directed along the radial unit vector $\vv{e}_r.$
More precisely, we have:
\begin{equation} \label{eq_formv}
{\vv{v}}_{in,\, \sslash}(x,y,z) = {v}_r(r,z) \vv{e}_r \,, \quad \text{ a.e. in $\mathcal G_{L}$.} 
\end{equation}
with an explicit formula in the aperture:
$$
v_r(r,z) = - \dfrac{3r}{\gamma^3(r)} (z-(h+\gamma_t(r)))(z-\gamma_b(r)) \,, \quad \text{ a.e. in $\mathcal G_{L/2}$.} 
$$

\medskip

Following the proof of Proposition \ref{prop_vas} in the previous section, to bound the distance between ${\vv{v}}_{0}$ and $\vv{u}_0$, we compute integrals 
$$
{I}_{in}[\vv{w}] := \int_{\mathcal G_L} \left( \Delta {\vv{v}_{in}} - \nabla {p}_{in}\right) \cdot \vv{w}.
$$ 
Again, as in the previous section, explicit computations and integrations by parts yield that:
$$
{I}_{in}[\mathbf w] = \int_{\mathcal G_L} \Delta_{x,y} {\vv{v}}_{in,\, \sslash} \cdot  \vv{w}_{\sslash} - \int_{\mathcal G_L} \nabla_{x,y} {v}_{in,\bot} \cdot \nabla_{x,y} w_{\bot} - \int_{\mathcal G_L} \partial_z {v}_{in,\bot} \partial_z w_{\bot}.
$$
Here, we introduce that 
\begin{equation} 
\partial_z w_{\bot} = - {\rm div}_{x,y} \vv{w}_{\sslash}\, \quad \partial_z {v}_{in,\bot} =  - {\rm div}_{x,y} {\vv{v}}_{in,\, \sslash}\,.  
\end{equation}
We plug these identities in the above integrals and integrate by parts.
Because of the radial form of ${\vv{v}}_{in,\, \sslash}$ (see \eqref{eq_formv}), we have 
$$\nabla_{x,y} {\rm div}_{x,y} {\vv{v}}_{in,\, \sslash} =\Delta_{x,y}  {\vv{v}}_{in,\, \sslash}.
$$
This entails:
$$
{I}_{in}[\mathbf w] = \int_{\mathcal G_L}  2 \Delta_{x,y}  {\vv{v}}_{in,\,\sslash} \cdot \vv{w}_{\sslash} - \int_{\mathcal G_L} \nabla_{x,y} {v}_{in,\bot} \cdot \nabla_{x,y} w_{\bot} 
$$ 
For the last integral, we bound $|\nabla_{x,y} {v}_{in,\bot}|$ similarly as in the proof of {Lemma \ref{lem_R}}. 
Introducing the bounds on $\gamma_b,\gamma_t$ and $\gamma,$ this entails: 
$$
|\nabla_{x,y} {v}_{in,\bot}| \leq C \left( \gamma^{2} |\nabla_{x,y} {p}_{in}| + \gamma^{\frac 52} |\nabla_{x,y}^2 {p}_{in}| + \gamma^3 |\nabla_{x,y}^3 {p}_{in}| \right)
$$
so that, introducing now that ${p}_{in}= \chi_L {q}_{in}$ and applying Proposition \ref{prop_unifreg}:
$$
\int_{\mathcal G_L} |\nabla_{x,y}{v}_{in,\bot}|^2 \leq K[C^{reg}_{2},C_{ell},C_{cvx}] \left( \int_{B(\mathbf 0,L)} \left[ \gamma^5 |\nabla {q}_{in}|^2 + \gamma^6 |\nabla^2 {q}_{in}|^2 + 
\gamma^7 |\nabla^3 {q}_{in}|^2 \right] + 1\right).
$$
Applying Propositions \ref{prop_reg2} and \ref{prop_reg} to ${q}_{in}$ entails:
$$
\int_{\mathcal G_L} |\nabla_{x,y}{v}_{in,\bot}|^2 \leq K[C^{reg}_{3},C_{ell},C_{cvx}] \,.
$$

\medskip

Then, we truncate $\vv{w}_{\sslash}$ with $\chi_{L/2}$ that vanish on the lateral boundaries of $\partial \mathcal G_{L/2}$ (and is equal to $1$ on $\mathcal G_{L/4}$) 
and we obtain that:
$$
\left| \int_{\mathcal G_L} \Delta_{x,y}  {\vv{v}}_{in,\,\sslash} \cdot \vv{w}_{\sslash}\right|
\leq  
\int_{\mathcal G_{L/2}} |\Delta_{x,y}  {\vv{v}}_{in,\,\sslash}| \cdot |\chi_{L/2} \vv{w}_{\sslash}| 
+ C_L \|{\vv{v}}_{in,\,\sslash} ; H^1(\Omega \setminus \mathcal G_{L/4}) \| \|\vv{w}_{\sslash} ; H^1(\Omega)\|.
$$
With similar arguments as in the proof of Lemma \ref{lem_vext} (see \eqref{eq_restevstar}), we bound the last term on the right-hand side:
$$
 \|{\vv{v}}_{in,\,\sslash} ; H^1(\Omega \setminus \mathcal G_{L/4}) \| \leq K[C_{ell},C^{reg}_2,\delta].
$$

\medskip

Finally, we apply the formula for $ {\vv{v}}_{in,\,\sslash}$ which implies in particular that:
$$
 \Delta_{x,y} {\vv{v}}_{in,\,\sslash} =  \left[  r \partial_{rr} \left( \dfrac{v_r}{r}\right) + 3  \partial_r \left( \dfrac{v_r}{r}\right)\right]\,.
$$
{Hence introducing that $\vv{w}$ vanishes on the upper and lower boundaries of $\mathcal G_{L/2},$ we bound with Cauchy Schwartz inequalities and a Hardy inequality
in the $z$ direction:
\begin{eqnarray*}
\int_{\mathcal G_L}  \left| \Delta_{x,y}  {\vv{v}}_{in,\,\sslash} \right|
\cdot |\chi_{L/2} \vv{w}_{\sslash}|   
&\leq& 
\int_{0}^{2\pi} \int_0^{L} \int_{\gamma_b(r)}^{h+\gamma_t(r)} (z-\gamma_b) \left| \Delta_{x,y}  {\vv{v}}_{in,\,\sslash} \right| \cdot \dfrac{|\chi_{L/2} \vv{w}_{\sslash}| }{z-\gamma_b} {\rm d}r {\rm d}z {\rm d}{\theta} \\
&\leq &
C  \left[\int_{\mathcal G_L} \left|  \gamma \left[  r \partial_{rr} \left( \dfrac{v_r}{r}\right) + 3  \partial_r \left( \dfrac{v_r}{r}\right) \right] \right|^2 \right]^{\frac 12}  \|\nabla \vv{w} ; L^2(\mathcal F)\|.
\end{eqnarray*}
We emphasize that the constant $C$ is universal and in particular independent of $h$.}
With the explicit formula for $v_r$ we get:
$$
\left|  \gamma \left[  r \partial_{rr} \left( \dfrac{v_r}{r}\right) + 3  \partial_r \left( \dfrac{v_r}{r}\right) \right] \right| \leq  K[C^{reg}_2,C_{ell}] \dfrac{r}{\gamma}
$$
As 
$$
\int_{\mathcal G_{L/2}} \left|\dfrac{r}{\gamma}\right|^2 \leq C \int_{0}^{L/2} \dfrac{r^3 {\rm d}r}{\gamma(r)} \leq K[C_{ell}], 
$$
we get that, for a constant $K$ depending on $C^{reg}_3$ and $C_{ell},C_{cvx},$ there holds: 
$$
|{I}_{in}[\mathbf w]| \leq  K \|\mathbf{u}^*;H^2(B(\mathbf 0,L))\|  \|\nabla \vv{w} ; L^2(\mathcal F)\|.
$$
This ends the proof.
\end{proof}

\section{Proof of {\bf Theorem \ref{cor_mainexact}}}
In this last section, we exhibit a particular case where the above informations yield a sharp asymptotic
expansion of the quantity:
$$
\int_{\mathcal F} |\nabla \vv{u}|^2.
$$
Throughout this last section, we assume that $\Omega = \mathbb R^3 \setminus \mathcal B^*$ 
with $\mathcal B^*$ a sphere of radius $R$ and  that $\mathcal B$ is a sphere of radius $S.$
We recall that we have then:
\begin{eqnarray*}
\gamma_t(x,y) &=& \dfrac{x^2 + y^2}{2S} + O((x^2+y^2)^2)\,, \\
\gamma_b(x,y) &=& - \dfrac{x^2 + y^2}{2R} + O((x^2+y^2)^2)\,, \\   
\gamma(x,y) &=& \dfrac{x^2 + y^2}{2R_1} - \dfrac{(x^2+y^2)^2}{8R_3^3}+ O((x^2+y^2)^{3})\,,   
\end{eqnarray*}
where $R_1$ and $R_3$ satisfy \eqref{eq_R}.\ We fix also a smooth boundary data $\mathbf{u}^*$ and denote by $\vv{u}$ the exact solution to the Stokes problem with boundary condition $\vv{u}^*$ and $\vv{v}[\vv{u}^*]$ the approximation that is constructed in Section \ref{sec_asvf}. 

\medskip

We introduce the notations of the previous section: indices $0,1,as,R$ distinguish the components of $\vv{u}^*$ and $\vv{u}$ and $\vv{v}[\vv{u}^*].$ 
Hence, we have 
$$
\vv{v}[\vv{u}^*] = \vv{v}_0 + \vv{v}_1 + \vv{v}_R,
$$
and a similar decomposition for $\vv{u}.$ We decompose 
$$
\|\vv{u}- \vv{v}[\vv{u}^*] ; V \| \leq  \| \vv{u}_{0} - \vv{v}_0 ; V \| + \|\vv{u}_1 - \vv{v}_1 ; V\| + \|\vv{u}_R ; V\| + \|\vv{v}_R ; V\|
$$
Applying Proposition \ref{prop_uR} and Proposition \ref{prop_v1}, Proposition \ref{prop_v0}  in the cylindrical case we deduce:
$$
\int_{\mathcal F} |\nabla (\mathbf u- \mathbf v[\vv{u}^*])|^2=  O (1) \left\{  \|\vv{u}^* ; H^3(B(\mathbf 0,L))\|^2 + \|\mathbf{u}^* ; H^{\frac 12}(\partial \Omega)\|^2\right\},
$$
where we keep the convention that landau notations $O(1)$ stand for quantities depending on $(h,R,S)$ which remains bounded by a constant depending on $R,S$ only for $h \in (0,1].$

\medskip

The weak formulation of the Stokes problem (remarking that $\vv{u}-\vv{v}[\vv{u}^*]$ vanishes on $\partial \mathcal F$) yields also that:
$$
\int_{\mathcal F} \nabla \vv{u} : \nabla (\vv{u}- \vv{v}[\vv{u}^*]|) = 0\,, \quad 
$$
Hence, we have:
$$
\int_{\mathcal F} |\nabla \vv{u}|^2 = \int_{\mathcal F} |\nabla \vv{v}[\vv{u}^*]|^2 +  O (1) \left\{  \|\vv{u}^* ; H^3(B(\mathbf 0,L))\|^2 + \|\mathbf{u}^* ; H^{\frac 12}(\partial \Omega)\|^2\right\}.
$$
To compute the first integral on the right-hand side of this last equality, we split:
$$
\int_{\mathcal F} |\nabla \vv{v}[\vv{u}^*]|^2 = \sum_{i=0,1,R} E_i + 2 \left( E_{01} + E_{0R} + E_{1R}\right)\,,
$$
where for $(i,j) \in \{0,1,R\}$ we define:
$$
E_i  = \int_{\mathcal F} |\nabla \vv{v}_i|^2 \qquad E_{ij} = \int_{\mathcal F} \nabla \vv{v}_i : \nabla \vv{v}_j\,.
$$
We complete the proof by studying the asymptotics of all these integrals.
The first-order terms will yield by computing $E_0$ and $E_1.$ 

\subsection{Study of positive terms}
We recall that, by applying Proposition \ref{prop_uR}, we get at first that 
\begin{equation} \label{eq_firstboundvR}
\int_{\mathcal F} |\nabla \vv{v}_R|^2 = O(1)\left\{ \|\vv{u}^* ; H^3(B(\mathbf 0,L))\|^2 + \|\vv{u}^* ; H^{\frac 12}(\partial \Omega)\|^2\right\}.
\end{equation}
Concerning the other terms, we remark that, by construction $\vv{v}_i = {\vv{v}}_{i,in} + \vv{v}_{i,ext}$
so that:
$$
\int_{\mathcal F} |\nabla \vv{v}_{i} |^2  = \int_{\mathcal F} |\nabla {\vv{v}}_{i,in}|^2 + 2\int_{\Omega^*} \nabla {\vv{v}}_{i,in} : \nabla {\vv{v}_{i,ext}} +  \int_{\Omega^*} |\nabla \vv{v}_{i,ext}|^2,
$$
where, reproducing the computations in the proof of Lemma \ref{lem_vext} (see \eqref{eq_restevstar}), we obtain that, for $i=0,1$:
$$
\int_{\Omega^*} |\nabla \vv{v}_{i,ext}|^2 + \int_{\Omega^*} |\nabla {\vv{v}}_{i,in}|^2 = O(1) \| \mathbf{u}^* ; H^3(B(\mathbf 0,L))\|^2. 
$$
Hence, we get that, for $i=0,1$:
$$
E_i = {E}_{i,in}  + O (1) \| \mathbf{u}^* ; H^3(B(\mathbf 0,L))\|^2, \quad {E}_{i,in} = \int_{\mathcal F} |\nabla {\vv{v}}_{i,in}|^2.
$$

\subsubsection{Asymptotics of ${E}_{1,in}$ }
Let first consider $\vv{v}_1.$ We drop index $1$ in the sequel and we recall that $\vv{v}_{1,in},$ denoted here by ${\vv{v}}_{in},$ is constructed as follows:
\begin{eqnarray*} 
{\vv{v}}_{in,\,\sslash\!}({x},{y},{z})  &=&  \dfrac{1}{2} ( z - (h+{\gamma}_t))( z - {\gamma}_b) \nabla_{x,y\,}  {p}_{in}  - \left(\dfrac{z -(h+ \gamma_t)}{\gamma } \right) \chi_L  \vv{u}^*_{\,\sslash}(\mathbf 0) \,, \\[4pt] \label{def_tvort}
{{v}}_{in,\bot}({x},{y},{z}) &=&  \dfrac 12 {\rm div}_{x,y}  \left[ \int^{h+\gamma_t}_{z} (s-(h+\gamma_t))(s-\gamma_b)\nabla_{x,y\,}  {p}_{in} \, {\rm d}s\right]   \\
&& + \,\int^{h+\gamma_t}_{z}  {\rm div}_{x,y}\left[  \left(\dfrac{s -( h+ \gamma_t)}{\gamma } \right)\chi_L\vv{u}^*_{\,\sslash}(\mathbf 0) {\rm d}s \right]\,,   \notag
\end{eqnarray*}
for $(x,y,z) \in \mathcal G_{L},$ where:
$$
{p}_{in}(x,y) =  \chi_L(x,y) {q}_{in} (x,y) \,, \quad \forall \, (x,y,z) \in \mathcal G_{L}\,.
$$
with $\chi_L$ a suitable truncation function and ${q}_{in}$ the unique solution to
$$
\begin{array}{rcll}
-\dfrac{1}{12} {\rm div}_{x,y}(\gamma^3 \nabla_{x,y} {q}_{in}) &=&  (x\partial_x u^*_{\bot}(\mathbf 0) + y \partial_y u^*_{\bot}(\mathbf 0) ) \vv{e}_z- {\rm div}_{x,y} \left[ \dfrac{(\gamma_t+ \gamma_b)}{2}  \vv{u}^*_{\sslash}(\mathbf 0)\right]\,, & \text{ on $B(\mathbf 0,L)\,,$} \\
{q}_{in} &=& 0 \,, & \text{ on $\partial B(\mathbf 0,L)$}\,. 
\end{array}
$$
From {Proposition \ref{prop_unifreg}}, we obtain first that:
$$
{E}_1 = \int_{\mathcal G_{L/2}} |\nabla {\vv{v}}_{in}|^2 + O(1) \|\mathbf u^*; H^{3}(B(\mathbf 0,L))\|^2,
$$
hence we may replace ${p}_{in}$ by ${q}_{in}$ in computations from now on. Then, it comes from the proof of {Proposition \ref{prop_vas}} (see \eqref{eq_controlvas}) that:
\begin{equation} \label{eq_firstboundv1}
\int_{\mathcal G_{L/2}} |\nabla_{x,y}{\vv{v}}_{in,\,\sslash\!} |^2 + |\nabla{{v}}_{in,\bot} |^2 = O(1) \|\mathbf u^*; H^{3}(B(\mathbf 0,L))\|^2.
\end{equation}
We obtain:
$$
{E}_{1,in} = \int_{\mathcal G_{L/2}}  | \partial_z {\vv{v}}_{in,\,\sslash\!} |^2  + O(1) \|\mathbf u^*; H^{3}(B(\mathbf 0,L))\|^2.
$$
Explicit computations yield that, on $\mathcal G_{L/2},$ there holds:
$$
\partial_z {\vv{v}}_{in,\,\sslash\!} = \dfrac{(z- (h+\gamma_t)) + (z - \gamma_b)}{2} \nabla_{x,y} {q}_{in} - \dfrac{\vv{u}^*_{\sslash\!}(\mathbf 0)}{\gamma}.
$$
Consequently, we have that: ${E}_{1,in} = {E}^1_{1,in} + {E}^2_{1,in}$ where (note that the cross-term vanishes by integrating w.r.t. $z$-variable at first)
$$
{E}^1_{1,in} =  \dfrac 14 \int_{\mathcal G_{L/2}}  |(z-(h+\gamma_t)) + (z - \gamma_b)|^2 | \nabla_{x,y} \tilde{q}_{in} |^2\,, \quad
{E}^2_{1,in} = \int_{\mathcal G_{L/2}} \dfrac{|\vv{u}^*_{\sslash\!}(\mathbf 0)|^2}{\gamma^2}.
$$
We end up the proof by computing the asymptotics of ${E}^{1}_{1,in}$ and ${E}^2_{1,in}.$

\medskip

Concerning ${E}^2_{1,in},$ we expand, for sufficiently small $r_0:$
\begin{eqnarray*}
{E}^2_{1,in} &=& 2\pi |\vv{u}^*_{\sslash\!}(\mathbf 0)|^2 \int_0^{L/2}  \dfrac{r{\rm d}r}{\gamma(r)} \, \\
	&=& 2\pi |\vv{u}^*_{\sslash\!}(\mathbf 0)|^2 \left(  \int_{0}^{r_0}  \dfrac{r{\rm d}r}{(h + \frac{r^2}{2R_1})} + O(1)\right) \, \\[4pt]
	&=& 2\pi R_1  |\vv{u}^*_{\sslash\!}(\mathbf 0)|^2  |\ln(h)| + O(1) \|\mathbf u^*;H^3(B(\mathbf 0,L))\|^2\,.
\end{eqnarray*}

As for ${E}^1_{1,in},$ we go back to the computations of Section \ref{sec_Gilbarg}. Indeed, integrating at first with respect to $z,$ we get:
$$
{E}^1_{1,in} = \dfrac{1}{12}  \int_{B(\mathbf 0,L/2)} \gamma^3 |\nabla  {q}_{in}|^2, 
$$
where we apply {Proposition \ref{prop_casf1}} to ${q}_{in}$ to compute the asymptotics of this last quantity. This yields:
$$
{E}^1_{1,in} = \dfrac{24\pi}{5} R_1 |\ln (h)| | R_1\nabla_{x,y} u_{\bot}^*(\mathbf 0) + \dfrac{(R-S)}{2(R+S)} \vv{u}_{\sslash\!}(\mathbf 0)|^2 + O(1) \|\mathbf u^*;H^3(B(\mathbf 0,L))\|^2\,.
$$
Finally, we obtain:
\begin{multline*}
{E}_{1,in} = \left( 2\pi R_1  |\vv{u}^*_{\sslash\!}(\mathbf 0)|^2 + \dfrac{24 \pi}{5} R_1  | R_1\nabla_{x,y} u_{\bot}^*(\mathbf 0) +\dfrac{(R-S)}{2(R+S)}\vv{u}^*_{\sslash\!}(\mathbf 0)|^2 \right) |\ln(h)| \\
  + O(1) \|\mathbf u^*;H^3(B(\mathbf 0,L))\|^2.
\end{multline*}

\subsubsection{Asymptotics of ${E}_{0,in}$}
We focus now on $E_{0,in}$ and drop index $0$ for simplicity. 
Let first recall that ${\vv{v}}_{in}$ is constructed as:
\begin{eqnarray*} 
{\vv{v}}_{in,\,\sslash\!}({x},{y},{z})  &=&  \dfrac{1}{2} ( z - (h+ {\gamma}_t))( z - {\gamma}_b) \nabla_{x,y\,} {p}_{in}\,, \\[4pt] 
{{v}}_{in,\bot}({x},{y},{z}) &=&  \dfrac 12 {\rm div}_{x,y}  \left[ \int^{h+\gamma_t}_{z} (s- ( h+ \gamma_t))(s-\gamma_b)\nabla_{x,y\,}  {p}_{in} \, {\rm d}s\right]   
\end{eqnarray*}
for $(x,y,z) \in \mathcal G_{L},$ where:
$$
{p}_{in}(x,y) =  \chi_L(x,y) {q}_{in} (x,y) \,, \quad \forall \, (x,y,z) \in \mathcal G_{L}\,.
$$
with $\chi_L$ a suitable truncation function and ${q}_{in}$ the unique solution to
$$
\begin{array}{rcll}
-\dfrac{1}{12} {\rm div}(\gamma^3 \nabla {q}_{in}) &=& {u}^*_{\bot}(\mathbf 0) & \text{ on $B(\mathbf 0,L)\,,$} \\
{q}_{in} &=& 0 \,, & \text{ on $\partial B(\mathbf 0,L)$}\,. 
\end{array}
$$
We recall further that, in this radial case, we may compute ${q}_{in}$ explicitly:
$$
{q}_{in}(r) =  {u}^*_{\bot}(\mathbf 0)  \int_{r}^{L} \dfrac{6s}{\gamma^3(s)}{\rm d}s \,, \quad \forall \, r \in (0,L)\,.
$$
so that
$$
\partial_r {q}_{in}(r) = - {u}^*_{\bot}(\mathbf 0)  \dfrac{6r}{\gamma^3(r)}  \,, \quad \forall \, r \in (0,L)\,.
$$
This entails that ${\vv{v}}_{in} = v_r \vv{e}_r + v_{z} \mathbf e_z$  with:
$$
\begin{array}{rcl}
v_r(x,y,z) &=& \dfrac{\partial_{r} q_{in}}{2} \left((z-(h+\gamma_t(r)))(z- \gamma_b(r)) \right)\,,  \\[4pt]
&=& - {u}^*_{\bot}(\mathbf 0) \dfrac{3r}{\gamma^3(r)} \left((z-(h+\gamma_t(r)))(z- \gamma_b(r))\right)\,,
\end{array}
\quad 
\forall \, (x,y,z) \in \mathcal G_{L/2}\,.
$$
We assume from now on that ${u}^*_{\bot}(\mathbf 0) = 1$ for simplicity. We note that we have
$$
|{u}^*_{\bot}(\mathbf 0) | \leq O(1) \|\mathbf{u}^* ; H^2(B(\mathbf 0,L))\|\,,  
$$
so that all $O(1)$ terms in the following computations will turn into $O(1) \|\mathbf{u}^* ; H^2(B(\mathbf 0,L))\|^2$ in the final result.

As in the computations for ${E}_{1,in}$, from {Proposition \ref{prop_unifreg}}, we obtain first that:
$$
{E}_{0,in} = \int_{\mathcal G_{L/2}} |\nabla {\vv{v}}_{in}|^2 + O(1),
$$
and we replace ${p}_{in}$ by ${q}_{in}$. Also, we already computed in the proof of {Proposition \ref{prop_v0}} that 
\begin{equation} \label{eq_firstboundv0}
\int_{\mathcal F} |\nabla_{x,y} {v}_{in,\bot}|^2 = O(1)\,.
\end{equation}
Consequently, we have:
$$
{E}_{0,in} = \int_{\mathcal G_{L/2}} \left( |\partial_r v_r|^{2} +
\left|\dfrac{v_r}{r} \right|^2 +  |\partial_{z} v_r|^2 + |\partial_{z} v_z|^2 \right) + O(1)  \,, 
$$
where, due to the incompressibility condition satisfied by ${\vv{v}}_{in}:$
\begin{eqnarray*}
\int_{\mathcal G_{L/2}} |\partial_{z} v_{z}|^2 &=& \int_{\mathcal G_{L/2}}\left|\dfrac{1}{r}\partial_{r}[rv_{r}]\right|^2  = \int_{\mathcal G_{L/2}} \left( |\partial_r v_{r}|^{2} + 
\left|\dfrac{v_{r}}{r} \right|^2 \right) + \int_{r=L}\int_{z=\gamma_b(L)}^{z=h+\gamma_t(L)} |v_{r}|^2 \\[2pt]
&=& \int_{\mathcal G_{L/2}} \left( |\partial_r v_{r}|^{2} + 
\left|\dfrac{v_r}{r} \right|^2  \right)  + O(1)\,
\end{eqnarray*}
as $v_{r}$ remains bounded independently of $h$ away from the origin. We get thus:
$$
{E}_{0,in}  = I_{z} + 2 I_r +O(1)\,, 
$$
with:
$$
I_z = \int_{\mathcal G_{L/2}}  |\partial_{z} v_r|^2 \,, \qquad 
I_r  = \int_{\mathcal G_{L/2}} \left( |\partial_r v_r|^{2} + 
\left|\dfrac{v_r}{r} \right|^2 \right)\,.
$$

\medskip

\paragraph{\em Computation of $I_z$} Replacing the integrand in $I_z$ with its values yields:
\begin{eqnarray*}
I_z &=&  \dfrac{1}{4}\int_{B(\mathbf 0,L/2)} \int^{h+\gamma_t(r)}_{\gamma_b(r)} |\partial_r{q}_{in}(r)|^2 \left[ (z-(h+\gamma_t(r))) + (z-\gamma_b(r))\right]^2 {\rm d}z {\rm d}x {\rm d}y \, \\
	&=& \dfrac{1}{4} \left[ \int_{B(\mathbf 0,L/2)} |\nabla {q}_{in}|^2 \gamma^3  {\rm d}x{\rm d}y \right] \int_0^{1} (2s-1)^{2} {\rm d}s\, \\
	&=& \dfrac{1}{12}  \left[ \int_{B(\mathbf 0,L/2)} |\nabla {q}_{in}|^2 \gamma^3  {\rm d}x{\rm d}y \right] \,.
\end{eqnarray*}
At this point, we apply {Proposition \ref{prop_casf0}} to ${q}_{in}$ yielding:
$$
 \left[ \int_{B(\mathbf 0,L/2)} |\nabla {q}_{in}|^2 \gamma^3  {\rm d}x{\rm d}y \right]  = 72 \pi \left[  \dfrac{R_1^2}{h} - 3 \dfrac{R_1^4}{R_3^3} |\ln(h)|\right] + O(1).
$$
This entails finally:
$$
I_z = \dfrac{6\pi R_1^2}{h} - 18\pi \dfrac{R_1^4}{R_3^3} |\ln(h)| + O(1)\,.
$$

\medskip

\paragraph{\em Computation of $I_r$}
We proceed with the computation of  $I_r = I_r^1 + I_r^2 $ where:
$$ 
I^1_r :=   \int_{\mathcal G_{L/2}} |\partial_{r} v_r|^2\,,    \qquad 
I^2_r :=  \int_{\mathcal G_{L/2}} \left|\dfrac{v_r}{r}\right|^2 \,.
$$
We first compute $I_r^2$ by replacing $v_r$ with its values:
\begin{eqnarray*}
I_r^2 &=& 18\pi  \int_0^{L/2} \int_{\gamma_b(r)}^{h+\gamma_t(r)}  \dfrac{[(z-(h+\gamma_t(r)))(z-\gamma_b(r))]^2}{\gamma(r)^6}\, r{\rm d}r {\rm d}z\\
	&=& 18 \pi \int_{0}^{L/2} \dfrac{r{\rm d}r}{\gamma(r)} \int_0^{1} [(s-1)s]^2{\rm d}s\,\\
	&=&  \dfrac{3\pi}{5} \int_{0}^{L/2} \dfrac{r{\rm d}r}{\gamma(r)} \,.
\end{eqnarray*}
We already computed (see the computation of ${E}^2_{1,in}$) that:
\begin{eqnarray*}
 \int_{0}^{L/2} \dfrac{r{\rm d}r}{\gamma(r)} &=& R_1 |\ln(h)| + O(1)\,.
\end{eqnarray*}
so that we obtain finally:
$$
I_r^2 = \dfrac{3\pi}{5} R_1 |\ln(h)| + O(1)\,.
$$

Second, we expand $\partial_r v_r.$ We have:
$$
\partial_r v_r = \dfrac 12 \left( \partial_{rr} {q}_{in}(r) (z-(h+\gamma_t(r)))(z-\gamma_b(r)) 
+ \partial_r {q}_{in}(r) \partial_r [(z-(h+\gamma_t(r))) (z-\gamma_b(r))] \right).
$$
Consequently: 
\begin{eqnarray*}
I_r^1 &=& \dfrac 14 \Bigl[ \int_{\mathcal G_{L/2}} |\partial_{rr} {q}_{in}(r) (z-(h+\gamma_t(r)))(z-\gamma_b(r))|^2 \\
&& + \int_{\mathcal G_{L/2}} |\partial_r{q}_{in}(r)  \partial_r [(z-(h+\gamma_t(r)))(z-\gamma_b(r))]|^2 \\
&&+ 2 \int_{\mathcal G_{L/2}} \partial_{rr} {q}_{in} \, \partial_{r} {q}_{in}\, \partial_{r} [(z-(h+\gamma_t(r)))(z-\gamma_b(r))] (z-(h+\gamma_t(r)))(z-\gamma_b(r)) \Big]\,\\
&=& I_{r,a} + I_{r,b} + I_{r,c}
\end{eqnarray*}
After tedious but straightforward computations, we get:
\begin{eqnarray*}
I_{r,a} &=&  15 \pi R_1 |\ln(h)| + O(1)\,,\\
I_{r,b} &=&  \left( 24\pi R_1 - \dfrac{24 \pi R_1^3}{RS}  	\right)|\ln(h)| + O(1)\,, \\
I_{r,c} &=& -30 \pi R_1 |\ln(h)| + O(1)\,.   
\end{eqnarray*}
and
\begin{eqnarray*}
I_r 
&=& \dfrac{24\pi }{5} \left[2 R_1   -  \dfrac{5 R_1^3}{RS} \right] |\ln(h)| + O(1)\,.
\end{eqnarray*}
Combining computations of $I_r$ and $I_z,$ we obtain: 
$$
E_{1,in} =  6 \pi \left[  \dfrac{R_1^2}{h} +  |\ln(h)| \left(   \dfrac{16}{5}R_1 - \dfrac{8 R_1^3}{RS} - 3 \dfrac{R_1^4}{R_3^3}\right)  \right] + O(1)\,.
$$

\subsection{Asymptotics of cross terms}
We proceed with the computation of the asymptotics of cross terms $E_{ij},$ $i\neq j \in \{0,1,R\}^2.$
We recall that with similar arguments as previously, there holds:
$$
E_{ij} = {E}_{ij,in} + O(1)\left\{ \|\mathbf{u}^*; H^3(B(\mathbf 0,L))\|^2 + \|\mathbf{u}^*;H^{\frac 12}(\partial \Omega)\|^2\right\}, 
$$
with obvious notations. 

\subsubsection{Asymptotics of ${E}_{01,in}$}
We first treat the term $E_{01}.$ For this term, we assume without restricting the generality
that $R \nabla {u}^*_{\bot} (\mathbf 0) + \vv{u}^*_{\sslash\,}(\mathbf 0)$ is parallel to $\vv{e}_1.$
 As a consequence, we obtain that ${q}_{1,in}$ reads $\varphi(r) \cos(\theta)$ in polar
coordinates so that, as a function of $(x,y)$ it satisfies:
$$
{q}_{1,in}(-x,-y) = -{q}_{1,in}(x,y) \quad \forall \, (x,y) \in B(\mathbf 0,L). 
$$
Hence, introducing the rotation matrix, 
$$
S_1 := 
\begin{pmatrix}
-1  & 0 & 0 \\
0 & -1 & 0 \\
0 &0 & 1 
\end{pmatrix}
$$
there holds:
$$
S_1{\vv{v}}_{1,in} (S_1 (x,y,z)) =  -{\vv{v}}_{1,in}(x,y,z)\,, \quad \forall \, (x,y,z) \in \mathcal G_{L}
$$
On the opposite, we have that $q_{0,in}$ satisfies $q_{0,in}(-x,-y) = q_{0,in}(x,y)$ so that
we have the symmetries:
$$
S_1 {\vv{v}}_{0,in} (S_1 (x,y,z)) =  {\vv{v}}_{0,in}(x,y,z)\,, \quad \forall \, (x,y,z) \in \mathcal G_{L}.
$$
Going to the derivatives, this entails that:
$$
{E}_{10,in} = \int_{\mathcal G_{L}} \nabla {\vv{v}}_{0,in}  : \nabla {\vv{v}}_{1,in} = - \int_{\mathcal G_{L}} \nabla {\vv{v}}_{0,in}  : \nabla {\vv{v}}_{1,in} = 0.
$$

\subsubsection{Asymptotics of ${E}_{0R,in}$}
{By definition, we have :
$$
E_{0R,in} =  \int_{\mathcal G_{L}} \nabla \vv{v}_{0,in} : \nabla \vv{v}_{R,in} 
$$
We introduce:
$$
\tilde{p}_{R,in} = p_{R,in} -  \int_{B(\mathbf 0,L)}  {p}_{R,in} (x,y)v_{0,\bot,in}(x,y,\gamma_b(x,y)){\rm d}x {\rm d}y,
$$
and, applying that $\vv{v}_{R,in}$ is divergence free, we transform:
\begin{eqnarray*}
E_{0R,in} &=&  \int_{\mathcal G_{L}} (\nabla \vv{v}_{R,in} - \tilde{p}_{R,in} \mathbb I_{3})  : \nabla \vv{v}_{0,in}  \\
&=&  \int_{\mathcal G_{L}} (\nabla_{x,y} \vv{v}_{R,in,\, \sslash} - \tilde{p}_{R,in} \mathbb I_{2}) : \nabla_{x,y} \vv{v}_{0,in,\,\sslash} +
\int_{\mathcal G_{L}} \partial_z \vv{v}_{R,in,\,\sslash}  \cdot \partial_z \vv{v}_{0,in,\,\sslash} \\
&& - \int_{\mathcal G_{L}}  \tilde{p}_{R,in} \partial_z {v}_{0,in,\bot}  +  \int_{\mathcal G_{L}} \partial_z {v}_{R,in,\bot}  \cdot \partial_z {v}_{0,in,\bot}
 +  \int_{\mathcal G_{L}} \nabla_{x,y} \vv{v}_{R,in,\,\bot}  \cdot \nabla_{x,y} \vv{v}_{0,in,\bot} 
\end{eqnarray*}
In this identity, we integrate by parts the integrals on the first line of the right-hand side yielding:
\begin{eqnarray*}
&& \int_{\mathcal G_{L}} (\nabla_{x,y} \vv{v}_{R,in,\, \sslash} - \tilde{p}_{R,in} \mathbb I_{2}) : \nabla_{x,y} \vv{v}_{0,in,\,\sslash} +
\int_{\mathcal G_{L}} \partial_z \vv{v}_{R,in,\,\sslash}  \cdot \partial_z \vv{v}_{0,in,\,\sslash}\\
 && =- \left(  \int_{\mathcal G_{L}} (\Delta_{x,y} \vv{v}_{R,in,\,\sslash} - \nabla_{x,y} \tilde{p}_{R,in})  \cdot \vv{v}_{0,in,\,\sslash} +
\int_{\mathcal G_{L}} \partial_{zz} \vv{v}_{R,in,\,\sslash} \cdot  \vv{v}_{0,in,\,\sslash} \right)
\\
&& =- \left(  \int_{\mathcal G_{L}} (\Delta_{x,y} \vv{v}_{R,in,\,\sslash} - \nabla_{x,y} {p}_{R,in} ) \cdot \vv{v}_{0,in,\,\sslash} +
\int_{\mathcal G_{L}} \partial_{zz} \vv{v}_{R,in,\,\sslash}  \cdot  \vv{v}_{0,in,\,\sslash} \right)
\\
&& = - \left( \int_{\mathcal G_{L}} \Delta_{x,y} \vv{v}_{R,in,\,\sslash} \cdot \vv{v}_{0,in,\,\sslash} \right)
\end{eqnarray*}
where we used that 
\begin{itemize}
\item $\vv{v}_{0,in,\,\sslash}$ vanishes on $\partial \mathcal G_{L}$
\item $\partial_{zz} \vv{v}_{R,in,\,\sslash}- \nabla_{x,y} p_{R,in} = 0$ in $\mathcal G_{L}$
\end{itemize}
We also note that $\tilde{p}_{R,in}$ does not depend on $z$ and apply boundary conditions for $v_{0,\bot,in}$ yielding:
\begin{eqnarray*}
\int_{\mathcal G_{L}}  \tilde{p}_{R,in} \partial_z {v}_{0,in,\bot}
&=& \int_{B(\mathbf 0,L)}  \tilde{p}_{R,in} v_{0,in,\bot}(x,y,\gamma_b(x,y)) \\
&=& 0 .
\end{eqnarray*}
because of our choice for $\tilde{p}_{R,in}.$ Finally, we have:
\begin{eqnarray*}
E_{0R,in} &=&    \int_{\mathcal G_{L}} \Delta_{x,y} \vv{v}_{R,in,\,\sslash} \cdot \vv{v}_{0,in,\,\sslash} 
+  \int_{\mathcal G_{L}} \partial_z {v}_{R,in,\bot}  \cdot \partial_z {v}_{0,in,\bot}\\
&& 
 +  \int_{\mathcal G_{L}} \nabla_{x,y} {v}_{R,in,\,\bot}  \cdot \nabla_{x,y} {v}_{0,in,\bot} 
 \end{eqnarray*}
 }
From \eqref{eq_firstboundvR} and \eqref{eq_firstboundv0}, we have that:
$$
\int_{\mathcal F} |\nabla_{x,y} {v}_{0,in,\bot}|^2 + \int_{\mathcal F} |\nabla {\vv{v}}_{in,R}|^2 = O(1)(\|\mathbf{u}^*; H^3(B(\mathbf 0,L))\|^2 ).
$$ 
Consequently, it remains:
\begin{eqnarray*}
{E}_{0R,in} &=& \int_{\mathcal G_{L}} \Delta_{x,y} \vv{v}_{R,in,\,\sslash} \cdot \vv{v}_{0,in,\,\sslash} 
+  \int_{\mathcal G_{L}} \partial_z {v}_{R,in,\bot}  \cdot \partial_z {v}_{0,in,\bot}\\
&& + O(1)\|\mathbf{u}^*; H^3(B(\mathbf 0,L))\|^2
\\
\end{eqnarray*}
Applying that ${\vv{v}}_{R,in}$ and ${\vv{v}}_{0,in}$ are incompressible, we get:
$$
 \int_{\mathcal G_{L}} \partial_z {v}_{R,in,\bot}   \partial_z {v}_{0,in,\bot} = -\int_{\mathcal F}  {\rm div}_{x,y} {\vv{v}}_{0,in,\,\sslash} {\rm div}_{x,y}{\vv{v}}_{R,in,\,\sslash} 
$$
As ${\vv{v}}_{0,\, \sslash,in}$ vanishes on $\partial \mathcal G_L,$ we integrate this identity by parts:
leading to:
\begin{eqnarray*}
\int_{\mathcal G_{L}} \Delta_{x,y} \vv{v}_{R,in,\,\sslash} \cdot \vv{v}_{0,in,\,\sslash} 
+  \int_{\mathcal G_{L}} \partial_z {v}_{R,in,\bot}  \cdot \partial_z {v}_{0,in,\bot}\\
= 
\int_{\mathcal G_{L}} ( \Delta_{x,y} \vv{v}_{R,in,\,\sslash} + \nabla_{x,y}{\rm div}_{x,y}\vv{v}_{R,in,\,\sslash} )  \cdot \vv{v}_{0,in,\,\sslash} 
\end{eqnarray*}
so that forgetting remainder terms for conciseness:
$$
|{E}_{0R,in}| \leq \int_{\mathcal F} |\nabla_{x,y}^2 {\vv{v}}_{R,in,\,\sslash}| | {\vv{v}}_{0,in,\,\sslash}|.
$$
At this point, we recall the explicit form of ${\vv{v}}_{R,\,\sslash,in}$ :
$$
{\vv{v}}_{R,in,\,\sslash} = (z-(h+\gamma_t))(z-\gamma_b) \nabla {p}_{R,in} + \left(\dfrac{ h+ \gamma_t  - z}{\gamma}\right) \chi_L \vv{u}^*_{R,\,\sslash}.
$$
Consequently, applying \eqref{eq_boundtbg} there holds:
\begin{multline*}
|\nabla_{x,y}^2 \vv{v}_{R,in,\,\sslash}| \leq  K[R,S] \Bigg( \gamma |\nabla {p}_{R,in}|  + \gamma^{\frac 32} |\nabla^2{p}_{R,in}| + \gamma^2 |\nabla^{3} {p}_{R,in}|  
\\+ |\nabla^2 \vv{u}^*_{R,\,\sslash}| + \dfrac{|\nabla \vv{u}^*_{R,\,\sslash}|}{\gamma^{\frac 12}} + \dfrac{|\vv{u}^*_{R,\,\sslash}|}{\gamma}\Bigg)
\end{multline*}
We recall we have also:
$$
|{\vv{v}}_{0,in,\,\sslash}| \leq |{u}_{\bot}^*(\mathbf 0)|\dfrac{r}{\gamma}.
$$
Finally, applying that $r \leq K[R,S] {\gamma^{\frac 12}},$ we get:
\begin{multline*}
{E}^1_{0R,in} \leq K[R,S] |{u}_{\bot}^*(\mathbf 0)| \int_{0}^L  \left( \gamma^{\frac 32} |\nabla {p}_{R,in}|  +  \gamma^{2} |\nabla^2 {p}_{R,in}| + \gamma^{\frac 52} |\nabla^{3} {p}_{R,in}| \right){r}{\rm d}r\\ +
\int_{0}^L  \left( |\nabla^2 \vv{u}^*_{R,\,\sslash}| + {|\nabla \vv{u}^*_{R,\,\sslash}|} + \dfrac{|\vv{u}^*_{R,\,\sslash}|}{\gamma^{\frac 12}} \right){r}{\rm d}r.
\end{multline*}
Hence:
\begin{multline*}
{E}^1_{0R,in} \leq K[R,S] |{u}_{\bot}^*(\mathbf 0)| \Biggl\{ \left( \int_{0}^L \left( \gamma^{3} |\nabla {p}_{R,in}|^2  +  \gamma^{4} |\nabla^2 {p}_{R,in}|^2 + \gamma^{5} |\nabla^{3} {p}_{R,in}|^2 \right) {r}{\rm d}r \right)^{\frac 12} \\
+ \| \vv{u}^*_{R,\,\sslash} ; H^2(B(\mathbf 0,L))\| \Biggr\},
\end{multline*}
so that, {introducing $q_{R,in} = q_{R,in}^{(1)} + h q^{(2)}_{R,in}$} and applying Propositions \ref{prop_favorable} and \ref{prop_reg}, we obtain
as for \eqref{eq_bornecasR}:
$$
{E}^1_{0R,in} = O(1)\| \vv{u}^* ; H^3(B(\mathbf 0,L))\|^2 \,.
$$

\subsubsection{Asymptotics of ${E}_{1R,in}$}
The computations of $E_{1R,in}$ follows the line of the preceding section. We first remark that \eqref{eq_firstboundvR} and \eqref{eq_firstboundv1} imply:
$$
\int_{\mathcal F} |\nabla_{x,y} {\vv{v}}_{1,in}|^2 +  |\nabla {v}_{1,in,\bot}|^2 + \int_{\mathcal F} |\nabla{\vv{v}}_{R,in}|^2 = O(1)\| \vv{u}^* ; H^3(B(\mathbf 0,L))\|^2
$$ 
so that:
$$
{E}_{1R,in} = \int_{\mathcal F} \partial_z \vv{v}_{1,in,\,\sslash} \partial_z \vv{v}_{R,in,\,\sslash} + O(1)\| \vv{u}^* ; H^3(B(\mathbf 0,L))\|^2.
$$
Explicit formulas yield that, for $i=1,R,$ we have:
$$
\partial_{z} \vv{v}_{i,in,\,\sslash} =  \nabla {p}_{i,in}( (z-(h+\gamma_t)) + (z-\gamma_b)) +  \dfrac{\vv{u}_{i,\,\sslash}^{*}}{\gamma}\,.
$$
Hence, integrating at first w.r.t. $z$ and deleting vanishing terms, we obtain:
\begin{eqnarray*}
\left| \int_{\mathcal F} \partial_z \vv{v}_{1,in,\, \sslash} \partial_z \vv{v}_{R,in,\, \sslash} \right|
&\leq&   K[R,S] \left( \int_{B(\mathbf 0,L)} |\nabla p_{R,in}| |\nabla p_{1,in}| \gamma^3 +  \int_{B(\mathbf 0,L)} \dfrac{|\vv{u}_{R,\, \sslash}^*| |\vv{u}_{1,\, \sslash}^*|}{\gamma}\right)\\
&\leq & K[R,S] \Bigl( \int_{B(\mathbf 0,L)}  |\nabla p_{1,in}|^2 \gamma^{\frac 72} + |\nabla p_{R,in}|^2  \gamma^{\frac 52}  \\
&& \quad + \int_{B(\mathbf 0,L)} \dfrac{|\vv{u}_{R,\, \sslash}^*|^2}{\gamma^{\frac 32}} + \dfrac{ |\vv{u}_{1,\, \sslash}^*|^2}{\gamma^{\frac 12}}\,. 
\end{eqnarray*}
As we already computed several times, the vanishing properties of $\vv{u}*_{R}$ in the origin imply that
$$
\int_{B(\mathbf 0,L)} \dfrac{|\vv{u}_{R,\, \sslash}^*|^2}{\gamma^{\frac 32}} + \dfrac{ |\vv{u}_{1,\, \sslash}^*|^2}{\gamma^{\frac 12}} \leq O(1) \|\vv{u} ; H^2(B(\mathbf 0,L))\|^2
$$
and, applying Proposition \ref{prop_favorable} and \ref{prop_reg2}, we get: 
$$
\int_{B(\mathbf 0,L)}  |\nabla q_{1,in}|^2 \gamma^{\frac 72} + |\nabla q_{R,in}|^2  \gamma^{\frac 52}  \leq K[R,S]  \|\vv{u}^* ; H^3(B(\mathbf 0,L))\|^2
$$
Hence, we have finally: ${E}_{1R,in}  = O(1)\| \vv{u} ; H^3(B(\mathbf 0,L))\|^2.$ 

\paragraph{Acknowledgement}
This paper was written while the first author was visiting the REO Team at INRIA Rocquencourt
that he thanks for its hospitality. The first author is supported by the project DYFICOLTI
ANR-13-BS01-0003-01.

\appendix

\section{Proof of Lemma \ref{lem_q}}
This appendix is devoted to the proof of 
\begin{lemma} \label{lem_q_app}
Given $R >0,$ there exists a $q \in C^{\infty}((0,\infty)) \cap C([0,\infty))$ solution to 
\begin{eqnarray} \label{eq_qs}
\partial_{ss} q +  \left(\dfrac{1}{s} +  \dfrac{3s}{R  (1+\frac{s^2}{2R})} \right) \partial_s q - \dfrac{q}{s^2} = 
-\dfrac{12 s}{(1+\frac {s^2}{2R})^3} \,, & & s \in (0,\infty) \,, \\
q(0) = 0 \quad \lim_{s \to \infty} q(s) = 0\,.  &&\,.  \label{eq_qs2}
\end{eqnarray}
Furthermore we have the asymptotic description:
\begin{equation}
q(s) = \dfrac{48 R^3}{5 s^3} + O(\frac 1 {s^4})\quad
\partial_sq(s) =  - \dfrac{144 R^3}{5 s^4} + O(\frac 1 {s^5}) \,.
\end{equation}
\end{lemma}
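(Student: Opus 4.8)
The plan is to treat \eqref{eq_qs}--\eqref{eq_qs2} as a classical linear second-order two-point boundary value problem on $(0,\infty)$ with a regular singular point at each endpoint, to solve it by variation of parameters, and to read off the asymptotics by matching against an explicit particular solution. Write $\hat\gamma(s)=1+\tfrac{s^{2}}{2R}$ and let $\mathcal L q$ denote the left-hand side of \eqref{eq_qs}. Multiplying \eqref{eq_qs} by $s\,\hat\gamma^{3}(s)$ puts it in the self-adjoint form $-(p\,q')'+\tfrac{p}{s^{2}}\,q=12\,s^{2}$ with $p(s)=s\,\hat\gamma^{3}(s)>0$ on $[0,\infty)$, the right-hand side of \eqref{eq_qs} being $F(s):=-12\,s/\hat\gamma^{3}(s)$.

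First I would locate the two principal homogeneous solutions. Near $s=0$ one has $p(s)\sim s$, so $0$ is a regular singular point with indicial exponents $\pm1$; a convergent Frobenius (or Picard) construction produces a solution $Y_{0}$, smooth on some interval $[0,s_{0})$ with $Y_{0}(s)=s+O(s^{2})$, which extends to a solution on all of $(0,\infty)$ since the coefficients are smooth and non-degenerate there. Every solution linearly independent of $Y_{0}$ is unbounded as $s\to0^{+}$ (it behaves like $s^{-1}$, possibly with a logarithm) and is therefore ruled out by $q(0)=0$. At $s=\infty$, the substitution $t=1/s$ shows $\infty$ is again a regular singular point: the coefficient of $q'$ is $\tfrac{7}{s}+O(s^{-3})$, so the indicial equation is $\rho^{2}+6\rho-1=0$ with roots $\rho_{\pm}=-3\pm\sqrt{10}$. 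Hence there is a solution $Y_{\infty}$ with $Y_{\infty}(s)\sim s^{-3-\sqrt{10}}$ (and $3+\sqrt{10}>4$), complemented by one growing like $s^{-3+\sqrt{10}}$ (and $-3+\sqrt{10}>0$).

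Next I would establish existence and uniqueness. A homogeneous solution vanishing at both ends is a multiple $cY_{0}$; but $Y_{0}$ contains the growing mode $s^{-3+\sqrt{10}}$ at infinity, so $c=0$ — equivalently, integrating $\int_{\varepsilon}^{M}\big(-(p q')'+\tfrac{p}{s^{2}}q\big)q$ by parts gives $\int_{\varepsilon}^{M}\big(p|q'|^{2}+\tfrac{p}{s^{2}}|q|^{2}\big)=[p\,q\,q']_{\varepsilon}^{M}\to0$ and forces $q\equiv0$. Thus $Y_{0}$ and $Y_{\infty}$ are independent, $p(s)W(s)\equiv W_{0}\neq0$ by Abel's identity, and the variation-of-parameters formula
\[
q(s)=\frac{1}{W_{0}}\Big(Y_{\infty}(s)\int_{0}^{s}12\,t^{2}\,Y_{0}(t)\,dt+Y_{0}(s)\int_{s}^{\infty}12\,t^{2}\,Y_{\infty}(t)\,dt\Big)
\]
defines the desired solution: the integrals converge because $t^{2}Y_{0}(t)=O(t^{3})$ near $0$ and $t^{2}Y_{\infty}(t)=O(t^{-1-\sqrt{10}})$ is integrable near $\infty$, and the same representation yields $q\in C([0,\infty))\cap C^{\infty}((0,\infty))$ with $q(0)=0$ and $q(s)\to0$ as $s\to\infty$ (the decay of $Y_{0}Y_{\infty}$ beats the growth of the primitives).

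Finally, for the asymptotics, a direct expansion of $\hat\gamma$ and of $\hat\gamma'/\hat\gamma$ for large $s$ shows that $q_{p}(s):=\tfrac{48R^{3}}{5}\,s^{-3}$ satisfies $\mathcal L q_{p}(s)=F(s)+r_{0}(s)$ with $r_{0}(s)=O(s^{-7})$, the coefficient $\tfrac{48R^{3}}{5}$ being precisely the one that cancels the $s^{-5}$ terms on both sides. Then $w:=q-q_{p}$ solves $\mathcal L w=-r_{0}$ on $(s_{0},\infty)$ with $w(s)\to0$; comparing $w$ with $Y_{0},Y_{\infty}$ (equivalently, a weighted integration by parts on $(s_{0},\infty)$, which now converges since the borderline term $\propto s^{-3}$ has been removed) gives $w(s)=O(s^{-4})$ — the decaying homogeneous contribution is $O(s^{-3-\sqrt{10}})=o(s^{-4})$ and the forced contribution is $O(s^{-5})$ — and differentiating the equation written as $w''=\tfrac{w}{s^{2}}-\big(\tfrac{7}{s}+O(s^{-3})\big)w'-r_{0}$ and bootstrapping yields $w'(s)=O(s^{-5})$, hence $q'(s)=-\tfrac{144R^{3}}{5}s^{-4}+O(s^{-5})$, as claimed. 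The main obstacle is exactly this behavior at infinity: the forcing $12s^{2}$ grows, and it is only the structure of the problem — the weight $\hat\gamma^{-3}$ and the precise exponents $-3\pm\sqrt{10}$ with $3+\sqrt{10}>4$ — that makes the variation-of-parameters integrals converge, lets the non-homogeneous power $s^{-3}$ dominate the decaying homogeneous mode, and guarantees the growing mode $s^{-3+\sqrt{10}}$ is genuinely absent (which is precisely what $q(\infty)=0$ enforces); controlling the remainder to order $s^{-4}$ requires carrying the large-$s$ expansion of the ODE coefficients to second order, where most of the bookkeeping lies.
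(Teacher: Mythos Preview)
Your proof is correct and takes a genuinely different route from the paper's.

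The paper proceeds by PDE and comparison methods: it embeds \eqref{eq_qs}--\eqref{eq_qs2} back into the two--dimensional elliptic problem $-\tfrac{1}{12}\mathrm{div}(\hat\gamma^{3}\nabla\varpi)=x$ on $B(\mathbf 0,L)$, solves it via Stampacchia, extracts the $\cos\theta$--mode to get $q_L$ on $(0,L)$, then uses the maximum principle twice --- first against a constant to get a uniform bound $0\le q_L\le 12(2R)^{3/2}$, second against the barrier $\bar q(s)=\tfrac{48R^{3}}{5s^{3}}$ corrected by a multiple of $s^{-4}$ --- to obtain the asymptotic profile uniformly in $L$. Existence on $(0,\infty)$ then follows by letting $L\to\infty$, and the derivative estimate by integrating the equation in divergence form $\tfrac{1}{s}\partial_s\big(s\hat\gamma^{3}q'\big)=\hat\gamma^{3}q/s^{2}-12s$.

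You instead carry out a direct Frobenius analysis at both singular endpoints, identify the indicial exponents $\pm 1$ at $s=0$ and $-3\pm\sqrt{10}$ at $s=\infty$, prove uniqueness by an energy identity, and then build $q$ explicitly by variation of parameters against the fundamental pair $(Y_0,Y_\infty)$. For the asymptotics you match against the same trial $q_p=\tfrac{48R^{3}}{5}s^{-3}$ and control the remainder $w=q-q_p$ by reading off the contributions of the decaying homogeneous mode (which is $O(s^{-3-\sqrt{10}})=o(s^{-6})$) and of the forced part (which your integral formula shows is $O(s^{-5})$). The derivative bound then follows by integrating $(p w')'=\tfrac{p}{s^{2}}w-p\,r_0=O(1)$, giving $w'=O(s^{-6})$.

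Both approaches identify the same explicit leading term $48R^{3}/(5s^{3})$; the paper's barrier argument is shorter and also delivers the sign $q\ge 0$, while your approach yields finer structural information (the exact decay exponent $-3-\sqrt{10}$ of the subdominant homogeneous mode, an explicit Green's--function representation of $q$, and in fact a remainder of order $s^{-5}$ rather than merely $s^{-4}$) at the cost of a bit more bookkeeping. Either line is entirely adequate for the lemma as stated.
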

\begin{proof}
From now on, we let $R >0$ as in the statement of our lemma. As the proof seems standard, we only sketch the 
main steps.

\medskip

First, we fix $L >0$ and solve:
\begin{eqnarray} \label{eq_qsp}
-\dfrac 1{12} \left( \partial_{ss} q +  \left(\dfrac{1}{s} +  \dfrac{3s}{R  (1+\frac{s^2}{2R})} \right) \partial_s q - \dfrac{q}{s^2} \right) = 
\dfrac{s}{(1+\frac {s^2}{2R})^3} \,, & & s \in (0,L) \,, \\
q(0) = 0, \quad q(L) = 0\,.  && \label{eq_qsp2}
\end{eqnarray}
To this end, we introduce $\gamma_1(x,y) := (1+(x^2+y^2)/(2R))$ and introduce the bilinear form:
$$
(( \varpi, \varphi ))_{1} = \dfrac 1{12} \int_{B(\mathbf 0,L)} |\gamma_1|^3 \nabla \varpi : \nabla \varphi
$$
on $H^1_0(B(\mathbf 0,L)).$ As $\gamma_1$ is smooth and does not vanish on $B(\mathbf 0,L)$ we obtain, by applying the Stampacchia theorem, existence of a unique $\varpi_L \in H^1_0(B(\mathbf 0,L))$ solution to 
$$
(( \varpi_L, \varphi ))_{1}  = \int_{B(\mathbf 0,L)} x\varphi(x,y) {\rm d}x {\rm d}y\,, \quad \forall \, \varphi \in H^1_0(B(\mathbf 0,L))\,.
$$ 
Due to the invariance by rotation of $\gamma_1$ and the computation domain $B(\mathbf 0,L)$ for this weak formulation, we have that the unique solution reads in polar coordinates: 
$$
\varpi_L(s,\theta) = q_L(s)\cos(\theta) \quad \forall \, (s,\theta) \in (0,L) \times (-\pi,\pi)\,,
$$ 
with $q_L \in C^{\infty}((0,L)) \cap C([0,L])$ solution to \eqref{eq_qsp}-\eqref{eq_qsp2}.  

\medskip

Second, we remark that we have a maximum principle for \eqref{eq_qsp} and that
$$
\dfrac{s}{(1+\frac {s^2}{2R})^3} \leq \dfrac{(2R)^{\frac 32}}{s^2} \quad \forall s>0\,,
$$
this yields in particular that
$$
0 \leq q_L(s) \leq  12(2R)^{\frac 32}\,, \quad \forall \, s \in (0,L)\,, \quad \forall \, L>0\,.
$$ 
Furthermore, setting $\bar{q}(s) = 48 R^3/(5 s^3)$ there holds: 
$$
-\dfrac 1{12} \left( \partial_{ss} \bar{q} +  \left(\dfrac{1}{s} +  \dfrac{3s}{R  (1+\frac{s^2}{2R})} \right) \partial_s \bar{q} - \dfrac{\bar{q}}{s^2} \right) = 
\dfrac{s}{(1+\frac {s^2}{2R})^3} + \varepsilon(s)
$$
where
$$
|\varepsilon(s)| \leq \dfrac{C_0}{s^6} \quad \forall \, s \in (1,\infty),
$$
Setting also $\hat{q}(s) = 1/s^4$, there exists a constant $c >0$ such that: 
$$
-\dfrac 1{12} \left( \partial_{ss} \hat{q} +  \left(\dfrac{1}{s} +  \dfrac{3s}{R  (1+\frac{s^2}{2R})} \right) \partial_s \hat{q} - \dfrac{\hat{q}}{s^2} \right) \geq \dfrac{c}{s^6}. \quad \forall \, s >>1.
$$
Finally, by application of the maximum principle, we obtain that there exists a constant $K$ independent of $L$
for which:
$$
q_L(s) =  \dfrac{48R^3}{5 s^3} + q_{rem}(s) \quad \text{ with } |q_{rem}(s)| \leq \dfrac{K}{s^4}\,. 
$$
As the previous estimates are independent of $L$ we can pass to the limit in $L\to \infty$ and obtain in the limit
a solution to \eqref{eq_qs}-\eqref{eq_qs2} with the expected zero-order asymptotic expansion. We do not detail further this passage to the limit. As $\gamma_1$ remains strictly positive on $(0,\infty)$ we may apply classical ellipticity results to yield that the solution-limit is indeed smooth on $(0,\infty)$ and continuous in $0.$

\medskip

Finally, we remark that $q' = \partial_s q$ satisfies:
$$
\dfrac{1}{s} \partial_s (s(1+s^2/(2R))^3 q'(s)) =   \dfrac{q(s)(1+s^2/(2R))^3}{s^2}  - {12s}.
$$
We obtain the expected asymptotic behavior of $q'$ by integrating this equation between $s_1=1$
and $s_2=s>1$.
\end{proof}


\begin{thebibliography}{100}

\bibitem{CardoneNazarovSokolowski09}
G.~Cardone, S.~A. Nazarov, and J.~Sokolowski.
\newblock Asymptotics of solutions of the {N}eumann problem in a domain with
  closely posed components of the boundary.
\newblock {\em Asymptot. Anal.}, 62(1-2):41--88, 2009.

\bibitem{Chambayada86}
G. Bayada and M. Chambat.
\newblock The transition between the {S}tokes equations and the {R}eynolds
  equation: a mathematical proof.
\newblock {\em Appl. Math. Optim.}, 14(1):73--93, 1986.

\bibitem{Cimatti83}
G. Cimatti.
\newblock How the {R}eynolds equation is related to the {S}tokes equations.
\newblock {\em Appl. Math. Optim.}, 10(3):267--274, 1983.

\bibitem{Cooley&ONeill68}
M.~D.~A. Cooley and M.~E. O'Neill.
\newblock On the slow rotation of a sphere about a diameter parallel to a
  nearby plane wall.
\newblock {\em J. Inst. Math. Applics}, 4:163--173, 1968.

\bibitem{Cooley&ONeill69}
M.~D.~A. Cooley and M.~E. O'Neill.
\newblock On the slow motion generated in a viscous fluid by the approach of a
  sphere to a plane wall or stationary sphere.
\newblock {\em Mathematika}, 16:37--49, 1969.

\bibitem{Cox74}
R.~G. Cox.
\newblock The motion of suspended particles almost in contact.
\newblock {\em Int. J. Multiphase Flow}, 1:343--371, 1974.

\bibitem{Dean&Oneill64}
W.~R. Dean and M.~E. O'Neill.
\newblock A slow motion of viscous liquid caused by the rotation of a solid
  sphere.
\newblock {\em Mathematika}, 10:13--24, 1963.

\bibitem{Galdibooknew}
G.~P. Galdi.
\newblock {\em An introduction to the mathematical theory of the
  {N}avier-{S}tokes equations}.
\newblock Springer Monographs in Mathematics. Springer, New York, second
  edition, 2011.
\newblock Steady-state problems.

\bibitem{DGVH10}
D.~G{\'e}rard-Varet and M.~Hillairet.
\newblock Regularity issues in the problem of fluid structure interaction.
\newblock {\em Arch. Ration. Mech. Anal.}, 195(2):375--407, 2010.

\bibitem{GilbargTrudingerbook}
D.~Gilbarg and N.~S. Trudinger.
\newblock {\em Elliptic partial differential equations of second order}.
\newblock Classics in Mathematics. Springer-Verlag, Berlin, 2001.

\bibitem{Happel&Brenner65}
J.~Happel and H.~Brenner.
\newblock {\em Low {R}eynolds number hydrodynamics with special applications to
  particulate media}.
\newblock Prentice-Hall Inc., Englewood Cliffs, N.J., 1965.

\bibitem{Hillairet07}
M.~Hillairet.
\newblock Lack of collision between solid bodies in a 2{D} incompressible
  viscous flow.
\newblock {\em Comm. Partial Differential Equations}, 32(7-9):1345--1371, 2007.

\bibitem{HillairetTakahashi09}
M.~Hillairet and T.~Takahashi.
\newblock Collisions in three-dimensional fluid structure interaction problems.
\newblock {\em SIAM J. Math. Anal.}, 40(6):2451--2477, 2009.

\bibitem{HillairetTakahashi10}
M.~Hillairet and T.~Takahashi.
\newblock Blow up and grazing collision in viscous fluid solid interaction
  systems.
\newblock {\em Ann. Inst. H. Poincar\'e Anal. Non Lin\'eaire}, 27(1):291--313,
  2010.


\bibitem{HouotMunnier08}
J.~ G. Houot and A. Munnier.
\newblock On the motion and collisions of rigid bodies in an ideal fluid.
\newblock {\em Asymptot. Anal.}, 56(3-4):125--158, 2008.

\bibitem{MunnierRamdanipp}
A. Munnier and K.~Ramdani.
\newblock Asymptotic analysis of a Neumann problem in a domain with cusp.
Application to the collision problem of rigid bodies in a perfect fluid.
\newblock arXiv:1405.5446v1.


\bibitem{Nazarov95}
S.~A. Nazarov.
\newblock On the water flow under a lying stone.
\newblock {\em Mat. Sb.}, 186(11):75--110, 1995.


\bibitem{ONeill64}
M.~E. O'Neill.
\newblock A slow motion of viscous liquid caused by a slowly moving solid
  sphere.
\newblock {\em Mathematika}, 11:67--74, 1964.

\bibitem{ONeill&Stewartson67}
M.~E. O'Neill and K.~Stewartson.
\newblock On the slow motion of a sphere parallel to a nearby plane wall.
\newblock {\em J. Fluid Mech.}, 27:705--724, 1967.

\bibitem{Starovoitov04}
V.~N. Starovoitov.
\newblock Behavior of a rigid body in an incompressible viscous fluid near a
  boundary.
\newblock In {\em Free boundary problems (Trento, 2002)}, volume 147 of {\em
  Internat. Ser. Numer. Math.}, pages 313--327. Birkh\"auser, Basel, 2004.

\end{thebibliography}
\end{document}